\newcommand{\AGL}{\mathop{\mathrm{AGL}}}
\newcommand{\Sp}{\mathop{\mathrm{Sp}}}
\newcommand{\SO}{\mathop{\mathrm{SO}}}
\newcommand{\SL}{\mathop{\mathrm{SL}}}
\newcommand{\PGL}{\mathop{\mathrm{PGL}}}
\newcommand{\GO}{\mathop{\mathrm{GO}}}
\newcommand{\PSL}{\mathrm{\mathrm{PSL}}}
\newcommand{\GU}{\mathop{\mathrm{GU}}}
\newcommand{\GL}{\mathop{\mathrm{GL}}}
\newcommand{\GSp}{\mathop{\mathrm{GSp}}}
\newcommand{\CSp}{\mathop{\mathrm{CSp}}}
\newcommand{\POmega}{\mathop{\mathrm{P}\Omega}}
\newcommand{\Aut}{\mathop{\mathrm{Aut}}}
\newcommand{\Out}{\mathop{\mathrm{Out}}}
\newcommand{\lcm}{\mathop{\mathrm{lcm}}}
\newcommand{\GammaL}{\mathop{\Gamma\mathrm{L}}}
\newcommand{\Alt}{\mathop{\mathrm{Alt}}}
\newcommand{\Sym}{\mathop{\mathrm{Sym}}}
\newcommand{\meo}{\mathop{\mathrm{meo}}}
\newcommand{\C}{\mathcal{C}}
\renewcommand{\wr}{\mathop{\mathrm{wr}}}
\def\pl{\oplus} 
\newtheorem{theorem}{Theorem}[section]
\def\la{\langle}
\def\ra{\rangle}
\newtheorem{corollary}[theorem]{Corollary}
\newtheorem{lemma}[theorem]{Lemma}
\newtheorem{remark}[theorem]{Remark}
\newtheorem{notation}[theorem]{Notation}
\newcounter{table2counter}
\newcommand{\tabtwolabel}[1]{\refstepcounter{table2counter} \label{#1}}
\newcounter{table3counter}
\newcommand{\tabthreelabel}[1]{\refstepcounter{table3counter} \label{#1}}
\newcounter{table4counter}
\newcommand{\tabfourlabel}[1]{\refstepcounter{table4counter} \label{#1}}
\newcounter{table5counter}
\newcommand{\tabfivelabel}[1]{\refstepcounter{table5counter} \label{#1}}
\newcounter{table6counter}
\newcommand{\tabsixlabel}[1]{\refstepcounter{table6counter} \label{#1}}
\newcounter{table7counter}
\newcommand{\tabsevenlabel}[1]{\refstepcounter{table7counter} \label{#1}}
\begin{document}

\title[Maximal element order]{Affine transformations of finite vector spaces with large orders or few cycles}

\author[S.~Guest]{Simon Guest}
\address{Simon Guest, Department of Mathematics, University of Southampton, Highfield, Southampton, SO17 1BJ. United Kingdom}\email{s.d.guest@soton.ac.uk}

\author[J. Morris]{Joy Morris}
\address{Joy Morris, Department of Mathematics and Computer Science,
University of Lethbridge,\newline
Lethbridge, AB. T1K 3M4. Canada}
\email{joy@cs.uleth.ca}

\author[C. E. Praeger]{Cheryl E. Praeger}
\address{Cheryl E. Praeger, Centre for Mathematics of Symmetry and Computation,
School of Mathematics and Statistics,
The University of Western Australia,\newline
 Crawley, WA 6009, Australia}\email{Cheryl.Praeger@uwa.edu.au}

\author[P. Spiga]{Pablo Spiga}
\address{Pablo Spiga,
Dipartimento di Matematica e Applicazioni, University of Milano-Bicocca,\newline
Via Cozzi 53, 20125 Milano, Italy}\email{pablo.spiga@unimib.it}

\thanks{Address correspondence to P. Spiga,
E-mail: pablo.spiga@unimib.it\\
The second author is supported in part by the National Science
 and Engineering Research Council of Canada.
This work forms part of Australian Research Council Project DP130100106 of the third author.}

\subjclass[2000]{20B15, 20H30}
\keywords{primitive permutation groups; conjugacy classes; cycle structure}

\begin{abstract}
Let $V$ be a $d$-dimensional vector space over a field of prime order $p$. We classify the 
affine transformations of $V$ of order at least $p^{d}/4$, and 
apply this classification to determine the finite primitive permutation groups of affine 
type, and of degree $n$, that contain a permutation of order at least $n/4$. Using this result we obtain 
a classification of finite primitive permutation groups of affine type containing a permutation with at 
most four cycles. 
 \end{abstract}
\maketitle

\section{Introduction}\label{sec:intro}
That permutations of a set of size $n$ can have order as great as 
$e^{(1+o(1))(n\log n)^{1/2}}$ was shown by Edmund Landau~\cite{La1,La2} 
in 1903. However many of these large ordered permutations do not belong to 
proper primitive subgroups of $\Sym(n)$ or $\Alt(n)$. Indeed, in~\cite{GMPSorders} 
it was shown that the primitive permutation groups on $n$ points having a nonabelian socle, and containing a permutation of order at least $n/4$, are very restricted, with the natural actions of alternating groups $\Alt(r)$ on subsets, and projective groups $\PSL_r(q)$ on points or hyperplanes playing a special role: the socle of each such group is $\Alt(r)^\ell$ or $\PSL_r(q)^\ell$ acting on $\ell$-tuples of subsets, points or hyperplanes.
The case of primitive groups with an abelian socle was not treated in~\cite{GMPSorders}. These primitive groups are groups of affine transformations of finite vector spaces, where the point set is the vector space itself. 

The first aim of this paper is to determine the affine transformations of a vector space of size $n$ which have order at least $n/4$, and the affine primitive groups in which they lie.
Each affine transformation $g$ of a finite vector space $V$ has the form 
$g=t_vh$, with $t_v:x\mapsto x+v$ a translation, for
some $v\in V$, and with $h\in \GL(V)$, where $t_v$ is performed first followed by $h$.

\begin{theorem} \label{thm:orders}
Let $V$ be a $d$-dimensional vector space over a field $ \mathbb{F}_{p}$ of prime order $p$,
and let $g=t_vh$ be an affine transformation of $V$ with order at least $p^d/4$. 
Then $g$ and $h$ appear in one of the Tables~$\ref{tab:neworders1},~\ref{tab:neworders2},~\ref{tab:neworders3}$. 
\end{theorem}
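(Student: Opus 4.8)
The plan is to translate the order of the affine transformation $g=t_vh$ into a question about a single linear map, and then to exploit the fact that a lower bound as strong as $p^d/4$ forces the rational canonical form of $h$ to be almost as unbalanced as possible. First I would record the elementary power formula: writing the action on the right, one has $g^k=t_{w_k}h^k$ with $w_k=vh(1+h+\cdots+h^{k-1})$, so that $|g|$ is the least $k$ with $h^k=1$ and $w_k=0$. Equivalently, under the standard embedding $\AGL(V)\hookrightarrow\GL_{d+1}(p)$ sending $g$ to the matrix $\left(\begin{smallmatrix} h&0\\ v&1\end{smallmatrix}\right)$, the order of $g$ equals the order of this $(d+1)\times(d+1)$ matrix. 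From either description one reads off that $|g|$ equals $|h|$ when $v\in\mathrm{Im}(h-1)$, and is $p|h|$ when $v$ has a nonzero component in $\cent{V}{h}$ along the semisimple directions; the only subtlety is the unipotent part, where the translation can enlarge a single Jordan block from size $e$ to size $e+1$, that is, multiply the $p$-part of the order of that block by $p$.

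Next I would bound $|h|$, and hence $|g|$, from above through the primary rational canonical form. Decomposing $V$ into $h$-indecomposable summands supported on the irreducible factors of the characteristic polynomial, a summand of dimension $n_i=d_ik_i$, with $d_i$ the degree of the underlying irreducible and $k_i$ the largest Jordan block over it, contributes a cyclic order dividing $(p^{d_i}-1)\,p^{\lceil\log_p k_i\rceil}$, so that $|h|$ divides $\lcm_i\bigl((p^{d_i}-1)p^{\lceil\log_p k_i\rceil}\bigr)$ with $\sum_i n_i=d$. Combined with the first paragraph this yields $|g|\le p\cdot\lcm_i\bigl((p^{d_i}-1)p^{\lceil\log_p k_i\rceil}\bigr)$. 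The essential inequality is then number-theoretic: for a decomposition $d=\sum_i n_i$ into two or more parts, the quantity $\lcm_i(p^{n_i}-1)$ is far smaller than $p^d$, because distinct factors $p^{n_i}-1$ and $p^{n_j}-1$ share the large common divisor $p^{\gcd(n_i,n_j)}-1$; quantifying this shows that any $h$ whose form is genuinely balanced gives $|g|<p^d/4$.

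The lower bound $|g|\ge p^d/4$ therefore forces $h$ to lie within a short explicit list of near-maximal shapes: an irreducible (Singer) element of order dividing $p^d-1$; an element acting irreducibly on a hyperplane together with a one-dimensional fixed or unipotent part, for which the translation supplies an extra factor of $p$ and $|g|$ reaches $p(p^{d-1}-1)=p^d-p$; and a bounded number of sporadic configurations that survive only for small $d$ or small $p$, where the factor of $4$ of slack admits a handful of extra $\lcm$ patterns. For each surviving shape I would compute the exact order of $g$ as a function of the position of $v$ relative to $\cent{V}{h}$ and $\mathrm{Im}(h-1)$, and identify the smallest affine primitive group containing $g$; assembling these computations produces Tables~$\ref{tab:neworders1}$--$\ref{tab:neworders3}$. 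I expect the main obstacle to be the exhaustive elimination of the balanced and mixed cases: one must make the estimate on $\lcm_i(p^{n_i}-1)$ completely uniform in $p$ and $d$, simultaneously controlling the interaction between that $\lcm$ and the single extra factor of $p$ contributed by either a translation or a unipotent Jordan block, and then treat by direct computation the finitely many small pairs $(p,d)$ where these uniform bounds are too weak.
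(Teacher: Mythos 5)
Your plan reproduces the paper's proof strategy almost step for step: the power formula $g^k=t_{w_k}h^k$ with $w_k=v(I+h+\dots+h^{k-1})$ is Lemma~\ref{lem:gorder}, the bound $(p^{d_i}-1)p^{\lceil\log_p k_i\rceil}$ on an indecomposable summand is Lemma~\ref{uni}, and the elimination of balanced canonical forms via the common divisors $p^{\gcd(n_i,n_j)}-1$, followed by an enumeration of the survivors for $p=2,3$ and for small $d$, is exactly how the paper's case analysis runs. (One cosmetic caveat: the survivors for $p=2$ are not ``a bounded number of sporadic configurations'' --- sums of Singer cycles on pairwise coprime dimensions give infinite families with unboundedly many summands, since $\prod_i(1-2^{-d_i})>1/4$; but your lcm mechanism correctly fails to eliminate these, so this only affects your anticipation of the answer, not the method.)

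There is, however, one step in your first paragraph that is wrong as stated and would corrupt the tables. You assert that the translation multiplies the $p$-part of the order of the affected Jordan block by $p$, i.e.\ that $|g|=p|h|$ whenever $v\notin\mathrm{Im}(h-1)$. Enlarging a unipotent block from size $e$ to size $e+1$ increases its order only when $e$ is exactly a power of $p$, because $|J_b|=p^{\lceil\log_p b\rceil}$. Concretely, for $p=2$, $d=3$, $h=J_3=I+N$, one computes $I+h+h^2+h^3=4I+6N+4N^2+N^3=0$ over $\mathbb{F}_2$, so $|t_vh|=|h|=4$ for \emph{every} $v$, even though $\mathrm{Im}(h-1)$ is a proper subspace; your criterion would instead report order $8$. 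The paper's Lemma~\ref{lem:gorder}(c) and Corollary~\ref{cor:kp} record the correct condition: $|g|=p|h|$ is achievable for some $v$ if and only if the minimal polynomial of $h$ is divisible by $(x-1)^{(k)_p}$, equivalently $h$ involves a block $J_{(k)_p}$ whose size equals the full $p$-part of $|h|$. Since the content of the theorem is precisely the tables (which record $|g|$ and distinguish $|g|=|h|$ from $|g|=p|h|$), carrying your gloss through would misplace or wrongly include borderline entries such as $h=J_3$, $p=2$. The error is local and easily repaired; with the corrected criterion the rest of your plan goes through and coincides with the paper's argument.
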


\begin{remark} 
 \emph{We note that each of the examples $g$ in 
Tables~$\ref{tab:neworders1},~\ref{tab:neworders2},~\ref{tab:neworders3}$ have 
order at least $p^{d}/4$ when $p$ is odd. This is clear from the expressions for the 
orders of the elements in the tables. However there are a few instances where this is not the case when $p=2$, 
so Theorem~\ref{thm:orders} is not in fact an `if and only if' statement for $p=2$. For example in 
line~\ref{line14T3} of Table~\ref{tab:neworders2}, when $(d, d_1, d_2, d_3, d_4)=(12, 2, 2, 3, 5)$, 
the element order is $|g | = (2^2-1)(2^3-1)(2^5-1)= 651 < 2^d/4 = 1024$. } 
\end{remark}

Theorem~\ref{thm:HA} in conjunction with the results in~\cite{GMPSorders} 
gives a complete classification of all finite primitive groups of degree $n$ 
containing elements of order at least $n/4$. 
The group of affine transformations of $V$ is denoted $\AGL(V)$ and is 
called the affine general linear group of $V$. It is a semidirect product 
$T\cdot\GL(V)$, where $T$ is the group of translations and $\GL(V)$ is the group
of invertible linear transformations of $V$. Finite primitive groups of \textit{affine type}
are the subgroups of $\AGL(V)$, for some $V$, of the form $G=T\cdot G_0$, where
$G_0=G\cap\GL(V)$ acts irreducibly on $V$. For $V=\mathbb{F}_p^d$ with $d=1$, each transitive subgroup of $\AGL(V)$ contains translations of order $p$, so all such groups are examples. Theorem~\ref{thm:HA} classifies the examples with $d\geq2$. 

\begin{theorem}\label{thm:HA}
Let $G=T\cdot G_0\leq\AGL(V)$ be an affine primitive group on $V$ of degree $p^d$,
where $p$ is prime and $d\geq 2$. If $G$ contains a permutation $g$ of order at least $p^d/4$, then 
one of the following holds for $G_0$:
 \begin{enumerate}
\item $\SL_{d/r}(p^r) \le G_0 \le {\rm \Gamma L}_{d/r}(p^r)$ for some $r \mid d$ with $1\leq r < d$;
\item $G_0 \le {\rm \Gamma L}_1(p^d)$ with $[\GL_1(p^d) : G_0 \cap \GL_1(p^d)]\leq 3$;
\item $G_0 \le \GL_{d/r}(p)\wr \Sym(r)$ for some $r\mid d$ with $1<r\leq d$, and additionally, one of: 
\begin{itemize}
\item[$(i)$] $p=2$, and $d=r\leq 5$, or $d=2r\leq 6$, or $d\geq 3r$ and $4r^2-21r\leq d$,
\item[$(ii)$] $p=3$, and $d=r\leq 3$, or $r=2$,
\item[$(iii)$]$p\geq 5$ and $d=r=2$;
\end{itemize}
\item $p=2$ and $d \le 6$, or $p=3$ and $d \le 4$, or $d=2$ and $p \le 13$, and $G_0$ is in Table~$\ref{tab:HAgroups}$.
 \end{enumerate}
 \end{theorem}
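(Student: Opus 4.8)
The plan is to deduce Theorem~\ref{thm:HA} from Theorem~\ref{thm:orders} by translating the classification of individual affine transformations of large order into a constraint on the full point stabiliser $G_0$. The key observation is structural: if $G=T\cdot G_0$ is primitive of affine type, then $G_0$ acts irreducibly on $V=\mathbb{F}_p^d$, and any permutation $g\in G$ of order at least $p^d/4$ is in particular an affine transformation $g=t_vh$ of order at least $p^d/4$, so Theorem~\ref{thm:orders} forces the pair $(g,h)$ into one of Tables~\ref{tab:neworders1},~\ref{tab:neworders2},~\ref{tab:neworders3}. Thus the linear part $h=g\cdot t_v^{-1}$ lies in $G_0$ and is one of the finitely many (up to conjugacy and parametrised families) elements described there. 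The task is then to read off, for each table entry, the smallest overgroup in $\GL(V)$ that is \emph{compatible with $G_0$ being irreducible and containing that $h$}, and to check that this places $G_0$ in one of the four listed cases.

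First I would recall the Aschbacher-type framework for $G_0\leq\GL_d(p)$: since $G_0$ is irreducible, it either preserves a field extension structure (leading to $G_0\leq\GammaL_{d/r}(p^r)$, the source of cases (1) and (2)), preserves an imprimitive decomposition $V=V_1\oplus\cdots\oplus V_r$ (the wreath product $\GL_{d/r}(p)\wr\Sym(r)$ of case (3)), or is one of the bounded exceptional configurations collected in Table~\ref{tab:HAgroups} (case (4)). My plan is to go through each row of Tables~\ref{tab:neworders1}--\ref{tab:neworders3} and identify which of these geometric types the element $h$ is consistent with: an element whose characteristic polynomial is irreducible of degree $d$ (a Singer-type element) forces $G_0$ into $\GammaL_1(p^d)$, whence case (2), with the index bound $[\GL_1(p^d):G_0\cap\GL_1(p^d)]\leq 3$ extracted from the precise order estimate in the corresponding table line; an element acting as a single large-order block of size $d/r$ over $\mathbb{F}_{p^r}$ pushes $G_0$ up to contain $\SL_{d/r}(p^r)$, giving case (1); an element built as a direct sum of blocks permuted by a cycle signals the imprimitive case (3), where the arithmetic side-conditions (i)--(iii) are exactly the inequalities guaranteeing that the block-permuting element can reach order $\geq p^d/4$.

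The heart of the argument — and the step I expect to be the main obstacle — is establishing that in cases (1) and (3) the element $h$ actually generates enough of $G_0$ to \emph{force} the stated overgroup containment, rather than merely being consistent with it. For case (1), I would argue that an element of order close to $p^d/4$ with the block structure of a long cycle over $\mathbb{F}_{p^r}$ cannot lie in any proper irreducible subgroup of $\GammaL_{d/r}(p^r)$ other than those containing $\SL_{d/r}(p^r)$, invoking the subgroup structure of the classical groups (an Aschbacher-class analysis, or the explicit maximal subgroup lists) to rule out smaller possibilities. This requires care because several table entries differ only in small parameters, and for small $d$ and $p$ the order bound $p^d/4$ is weak enough that sporadic irreducible subgroups intervene — precisely the groups that must be swept into case (4). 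Consequently I would handle the generic large-parameter ranges by the uniform Aschbacher argument and then treat the finitely many residual cases ($p=2$ and $d\leq 6$, $p=3$ and $d\leq 4$, $d=2$ and $p\leq 13$) by direct inspection, matching each surviving irreducible $G_0$ against Table~\ref{tab:HAgroups}. Throughout, the inequalities in case (3)(i)--(iii) must be verified to coincide exactly with the conditions under which the relevant table lines produce an element of order at least $p^d/4$, which I expect to be a routine but delicate comparison of the order formulas against $p^d/4$.
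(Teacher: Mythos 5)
Your proposal follows essentially the same route as the paper: apply Theorem~\ref{thm:orders} to pin down the linear part $h$, then run an Aschbacher-class analysis of the irreducible overgroups of $h$ (with the extension-field case handled via the known overgroup classifications and the imprimitive case via explicit order estimates), sweeping the small parameter ranges into Table~\ref{tab:HAgroups} by direct computation. You correctly identify the main obstacle — forcing the overgroup containments in cases (1) and (3) rather than merely observing consistency — which is exactly where the paper invests its effort in the class-by-class lemmas of Section 5 and the citations to the overgroup classification literature.
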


 \begin{remark}
 \emph{ We note that in case (3) the image of $g$ in $\Sym(r)$ is trivial in most cases. We prove this in Lemma~\ref{lem:onlyC2s} where we also find the exact values where $g$ has a possibly non-trivial image in $\Sym(r)$. Case~(3)~(i) does not always occur; a necessary condition for existence is for such a group to contain an element in one of the lines 7--18 of Table~\ref{tab:neworders2}, or $d\le 5$. The function $4r^2-21r$ in case (3)~(i) is not the best possible lower bound of $d$ and for a refined version we refer to Remark~\ref{rem2}.
} 
 \end{remark}

Since the order of a permutation is equal to the least common multiple of the cycle lengths 
in its disjoint cycle representation, permutations with a bounded number of 
cycles have orders which grow at least linearly with the degree $n$: if a permutation 
has $c$ cycles, then one of its cycles has length at least $n/c$, and hence its order 
is at least $n/c$. Of course the converse is not true: permutations with order at least 
$n/4$ can have as many as $3n/4$ cycles of length 1. We apply our classification of affine 
transformations of order at least $n/4$ to determine all affine transformations which have 
at most 4 cycles, as well as the affine primitive groups which contain such elements.

\begin{theorem}\label{thm:element4cycles}
Let $V$ be a $d$-dimensional vector space over a field $\mathbb{F}_p$ of prime order $p$,
and let $g=t_vh$ be an affine transformation of $V$ with at most four cycles in its action on $V$. 
Then $g$ appears in one of the Tables~$\ref{tab:new1}, ~\ref{tab:new2},~\ref{tab:new3}$. 
\end{theorem}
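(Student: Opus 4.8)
The plan is to derive Theorem~\ref{thm:element4cycles} as a refinement of Theorem~\ref{thm:orders}. The first step is the elementary observation recorded before the statement: a permutation of a set of size $p^d$ with at most four cycles has a cycle of length at least $p^d/4$, and since the order is the least common multiple of the cycle lengths, $|g|\ge p^d/4$. Hence every such $g$ satisfies the hypothesis of Theorem~\ref{thm:orders}, so $g$ and its linear part $h$ occur in one of Tables~\ref{tab:neworders1}, \ref{tab:neworders2}, \ref{tab:neworders3}. The problem therefore reduces to a finite inspection: for each line of those tables I must decide exactly which of the listed transformations have four or fewer cycles, and retain precisely those in Tables~\ref{tab:new1}, \ref{tab:new2}, \ref{tab:new3}.

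The main tool will be a formula for the number $c(g)$ of $\langle g\rangle$-orbits on $V$. Writing $g=t_vh$ as $x\mapsto h(x)+w$ with $w=h(v)$, the fixed points of $g^k$ form a coset of $\ker(h^k-I)$ (the $1$-eigenspace of $h^k$) whenever that coset is nonempty, and are empty otherwise; by the orbit-counting lemma applied to $\langle g\rangle$ of order $m=|g|$,
\[
c(g)=\frac{1}{m}\sum_{k=0}^{m-1}\bigl|\mathrm{Fix}(g^k)\bigr|,
\]
where each summand is either $0$ or $p^{\dim\ker(h^k-I)}$. Since $c(g)$ is invariant under conjugation in $\AGL(V)$, it suffices to treat one representative for each $\GL(V)$-class of $h$ and each coset of $w$ modulo $\mathrm{im}(h-I)$. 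In particular, when $1$ is not an eigenvalue of $h$ every $t_vh$ is conjugate to $h$ and we may take $w=0$, whereas when $\dim\ker(h-I)=\mu>0$ there are $p^\mu$ classes and a suitable choice of $v$ makes $g$ fixed-point free. This dichotomy is exactly what separates the genuinely affine examples from the linear ones and must be tracked line by line.

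For the generic families this is quick. In the $\GammaL_1(p^d)$ and Singer cases (case~(2) of Theorem~\ref{thm:HA}), and in the cases where $\SL_{d/r}(p^r)\le G_0$ with $h$ acting almost irreducibly, the order $m$ is so close to $p^d$ that $p^d/m\le 4$ while $h$ has a very small $1$-eigenspace; the displayed sum then has only a handful of nonzero terms beyond $k=0$, and one reads off $c(g)$ directly, confirming at most four cycles for essentially all such $g$. The delicate point is the behaviour of the short cycles: one must count the orbits inside the fixed spaces of the various powers $h^k$ and determine, via the translation coset, which survive, since a single extra short cycle can push the count from four to five.

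The main obstacle is the imprimitive case~(3), $G_0\le\GL_{d/r}(p)\wr\Sym(r)$, together with the low-dimensional sporadic entries of case~(4). When the image of $g$ in $\Sym(r)$ is trivial, $h$ is block-diagonal, $\ker(h^k-I)$ splits as a direct sum over the blocks, and the orbit-counting sum is evaluated simply by adding the block dimensions, which is routine if laborious. The genuine difficulty is a nontrivial image in $\Sym(r)$, where powers of $h$ permute and mix the blocks, so that $\dim\ker(h^k-I)$ no longer decomposes and the estimate $c(g)\approx p^d/m$ can fail; here I would lean on the structural restriction recorded in Lemma~\ref{lem:onlyC2s}, namely that this image is nontrivial only in a short explicit list of configurations, and analyse each directly. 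The remaining sporadic groups --- $p=2$ with $d\le 6$, $p=3$ with $d\le 4$, or $d=2$ with $p\le 13$ --- are most safely disposed of by machine computation, enumerating the $\AGL(V)$-classes of affine transformations and counting cycles. Collating the survivors across all cases yields Tables~\ref{tab:new1}, \ref{tab:new2}, \ref{tab:new3}.
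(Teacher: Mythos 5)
Your opening reduction is exactly the paper's: a permutation with at most four cycles on $p^d$ points has order at least $p^d/4$, so $g$ and $h$ must occur in Tables~\ref{tab:neworders1}--\ref{tab:neworders3}, and the task becomes a line-by-line inspection of those tables. Your orbit-counting formula is also a legitimate alternative tool; the paper instead normalises $v$ into the $(x-1)$-primary component $U$ via Lemma~\ref{sub-restrict} and then uses three structural ``Claims'' (bounding the product of the number of $g$-cycles on $U$ with the number of $h$-cycles on invariant complements, and analysing the case of two irreducible summands) to force the coprimality and Singer-cycle conditions that appear in Tables~\ref{tab:new1}--\ref{tab:new3}.

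However, the second half of your proposal mis-identifies what has to be analysed. Theorem~\ref{thm:element4cycles} classifies single affine transformations, so the case division must follow the lines of Tables~\ref{tab:neworders1}--\ref{tab:neworders3}, in which every $h$ is already written as a direct sum of Jordan blocks and powers of Singer cycles. You instead organise the work around cases (2), (3), (4) of Theorem~\ref{thm:HA} --- the classification of \emph{groups} --- and declare the ``genuine difficulty'' to be a nontrivial image of $h$ in $\Sym(r)$ inside a wreath product. That difficulty does not exist in this proof: no $h$ in the element tables permutes blocks, and Lemma~\ref{lem:onlyC2s} plays no role here (it belongs to the proofs of Theorems~\ref{thm:HA} and~\ref{thm:HA4cycles}). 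Meanwhile the genuinely delicate points are only gestured at. First, for a fixed $h$ the cycle count depends on the $\AGL(V)$-class of the translation part, so one must normalise $v$ into $U$ (Lemma~\ref{sub-restrict}) and then check the finitely many surviving choices case by case; for instance, in line~\ref{line9T4} of Table~\ref{tab:neworders3} only $v=e_3$ yields an example. Second, when $h$ has two or more nontrivial irreducible summands outside $U$, the $g$-invariant partition of $V$ into $U$, the sets $(U\oplus W_i)\setminus U$, and the complement already accounts for at least four nonempty invariant subsets, which forces $p=2$, pairwise coprime dimensions, genuine Singer cycles on each $W_i$, and $h|_U=J_a$ with $a\le 2$ (the paper's Claim~3). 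Without these two ingredients made explicit, the line-by-line inspection cannot be completed as described, even though the overall skeleton of your argument is the right one.
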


 \begin{theorem}\label{thm:HA4cycles}
Let $G=T\cdot G_0\leq\AGL(V)$ be an affine primitive group on $V$ of degree $p^d$,
where $p$ is prime and $d\geq 2$. If $G$ contains a permutation with at most four cycles, 
then one of the following holds:
\begin{enumerate}
\item $\SL_{d/r}(p^r)\leq G_0\leq \GammaL_{d/r}(p^r)$ where $r$ divides $d$ and $1 \le r \le d$. Moreover, $G_0$ contains $s_{d/r}^i$, where $1\leq i\leq 3$ and $s_{d/r}$ denotes a Singer cycle in $\GL_{d/r}(p^r)$;
\item $G_0\leq \GL_{d/r}\wr\Sym(r)$ for some $r\mid d$ with $r>1$ and
\begin{itemize} 
\item[$(i)$] $p=2$ and $r=2$, or $d=r\leq 5$, or $(d,r)=(6,3)$,
\item[$(ii)$] $p=3$ and $d=r\leq 3$,
\item[$(iii)$]$p\geq 5$ and $d=r=2$;
\end{itemize}
\item $G_0$ is contained in one of the rows of Table~$\ref{tab:HAgroups}$ with a `y' in the fourth column.
 \end{enumerate}
 \end{theorem}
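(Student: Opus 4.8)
The plan is to derive Theorem~\ref{thm:HA4cycles} from the element-level classification in Theorem~\ref{thm:element4cycles}, exactly as Theorem~\ref{thm:HA} is derived from Theorem~\ref{thm:orders}. The starting observation, already noted in the text, is that a permutation of $V$ with at most four cycles has order at least $p^d/4$, since one of its cycles then has length at least $p^d/4$. Thus every $G$ satisfying the hypothesis of Theorem~\ref{thm:HA4cycles} also satisfies that of Theorem~\ref{thm:HA}, and the possibilities listed here must form a subset of those in Theorem~\ref{thm:HA}. The real content is the sharper filtering obtained by replacing the `large order' Tables~\ref{tab:neworders1}--\ref{tab:neworders3} with the strictly smaller `few cycles' Tables~\ref{tab:new1}--\ref{tab:new3}.

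First I would set up the reduction from groups to elements. Because $G=T\cdot G_0$ is primitive and $G_0$ is irreducible, the socle $T$ is the full translation group, so $T=V$; hence every translation $t_v$ lies in $G$, and an affine transformation $g=t_vh$ lies in $G$ if and only if its linear part $h$ lies in $G_0$ (indeed $h=t_v^{-1}g\in G\cap\GL(V)=G_0$). Therefore $G$ contains a permutation with at most four cycles precisely when $G_0$ contains the linear part $h$ of one of the elements $g=t_vh$ listed in Tables~\ref{tab:new1}--\ref{tab:new3}. It then remains, for each such $h$, to determine the irreducible subgroups $G_0\le\GL(V)$ that contain it, and to read off the resulting constraints on $(p,d,r)$.

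Next I would sort the tabulated linear parts $h$ by geometric type. For the semilinear family, $h$ is a power of a Singer cycle $s_{d/r}$ of $\GL_{d/r}(p^r)$, possibly multiplied by a field automorphism; such an $h$ acts irreducibly, and the irreducible overgroups are those of case~(1), namely $\SL_{d/r}(p^r)\le G_0\le\GammaL_{d/r}(p^r)$, now including $r=d$, which absorbs the $\GammaL_1(p^d)$ possibility appearing separately in Theorem~\ref{thm:HA}. Since $s_{d/r}^i$ fixes $0$ and permutes the remaining $p^d-1$ vectors in $\gcd(i,p^d-1)$ orbits, at most four cycles forces $\gcd(i,p^d-1)\le 3$, so the few-cycles condition requires $G_0$ to contain a power $s_{d/r}^i$ with $1\le i\le 3$; this is exactly where the hypothesis is genuinely stronger than large order, since higher powers of $s_{d/r}$ retain order at least $p^d/4$ yet produce more than four cycles. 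For the imprimitive family, $h$ stabilises a direct-sum decomposition $V=V_1\oplus\cdots\oplus V_r$ into subspaces of dimension $d/r$ and lies in $\GL_{d/r}(p)\wr\Sym(r)$, giving case~(2); here I would read the admissible $(p,d,r)$ directly off the lines of Tables~\ref{tab:new1}--\ref{tab:new3} that realise at most four cycles, obtaining conditions~(i)--(iii). The point requiring care is that the large-dimensional wreath family of Theorem~\ref{thm:HA}(3)(i), the range $d\ge 3r$ with $4r^2-21r\le d$, has large order but strictly more than four cycles, and so is eliminated entirely.

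The remaining entries live in the small range $p=2,\ d\le 6$, or $p=3,\ d\le 4$, or $d=2,\ p\le 13$, where the possibilities for $G_0$ are the groups of Table~\ref{tab:HAgroups}. For these I would check, group by group, whether $G_0$ actually contains the linear part of one of the corresponding tabulated $g$, and mark with a `y' in the fourth column exactly those that do, giving case~(3). I expect this last step, together with the explicit cycle-counting underlying conditions~(2)(i)--(iii), to be the main obstacle: the number of cycles of $g=t_vh$ depends delicately on both $v$ and $h$, through the fixed spaces of the powers of $h$ and through whether $g^{|h|}$ is a nontrivial translation, so pruning the large-order list down to the few-cycles list, and verifying Table~\ref{tab:HAgroups} row by row, is where the genuine work, and any necessary computation, lies.
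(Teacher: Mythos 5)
Your overall strategy --- observe that at most four cycles forces order at least $p^d/4$ so that Theorem~\ref{thm:HA} applies, and then filter its cases by asking which $G_0$ contain the linear part of an element from Tables~\ref{tab:new1}--\ref{tab:new3} --- is the paper's strategy, and your reduction from groups to elements (since $T\le G$, one has $t_vh\in G$ if and only if $h\in G_0$) is sound. However, there is a concrete error in your treatment of the wreath case. You assert that the large-dimensional family of Theorem~\ref{thm:HA}(3)(i), the range $d\ge 3r$ with $4r^2-21r\le d$, ``is eliminated entirely.'' It is not: for $p=2$ the subcase $r=2$ of that range survives for every even $d\ge 6$, and this is precisely the unrestricted clause ``$p=2$ and $r=2$'' in condition (2)(i) of the very statement you are proving. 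Concretely, line~\ref{line7T6} of Table~\ref{tab:new2} with $a=1$, $a_1=d/2-1$, $a_2=d/2$ gives an element $t_{e_1}(J_1\pl s_{d/2-1}\pl s_{d/2})$ with exactly four cycles whose linear part lies in $\GL_{d/2}(2)\times\GL_{d/2}(2)$. What is true, and what the paper actually proves, is that for $p=2$ and $d/r\ge 3$ only $r=2$ survives: by Lemma~\ref{lem:onlyC2s} the linear part $h$ lies in $\GL_{d/r}(2)^r$, and inspection of Tables~\ref{tab:new1}--\ref{tab:new3} shows that $h$ has at most three indecomposable summands (forcing $r\le 3$) and an irreducible summand acting on a subspace of dimension at least $d/2$ (forcing $r=2$). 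Your blanket elimination both contradicts the theorem's conclusion and omits this argument; a parallel argument is also needed to kill $r=2$ when $p=3$ and $d>4$.

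A second, smaller gap concerns the ``Moreover'' clause of case (1): you must show $G_0$ contains $s_{d/r}^i$ with $1\le i\le 3$ even when the witnessing few-cycles element is not itself a Singer power, e.g.\ when $r=1$ and the tabulated linear part is $J_1\pl s_{d-1}^i$ or $s_1\otimes J_2$. You only discuss the case where $h$ is a power of $s_{d/r}$. The paper closes this by a determinant argument: since $\SL_d(p)\le G_0$ and $\det(h)$ has multiplicative order $(p-1)$, $(p-1)/2$ or $(p-1)/3$, the group $G_0$ automatically contains a suitable $s_d^i$. This step, together with the residual \texttt{magma} checks in small dimensions and for Table~\ref{tab:HAgroups}, needs to be made explicit.
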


Bamberg and Penttila~\cite{BP} have obtained a very detailed classification of the groups satisfying part~(1). The classification in Theorem~\ref{thm:HA4cycles} could be refined taking into account the results of~\cite{BP}.

In~\cite{GMPScycles}, we build on these 
results to classify all finite primitive groups containing elements with at most 
four cycles. These results have various applications; in particular to normal 
coverings of a group and to the study of monodromy groups of Siegel functions. 
We refer the reader to~\cite{GMPScycles} for more details and also to~\cite{mueller}, 
where the finite primitive groups that contain a permutation with at most two cycles 
are classified.

The choice of ``$p^d/4$'' in Theorems~\ref{thm:orders} and~\ref{thm:HA} and of ``four'' in Theorems~\ref{thm:element4cycles} and~\ref{thm:HA4cycles} is to some extent arbitrary. On the one hand it allows a list of exceptions that is not too cumbersome to use and, on the other hand, it will be strong enough to determine in the forthcoming paper~\cite{chore} the first sharp bound on the normal covering number of $\Sym(n)$.

\begin{table}[htdp]
\begin{center}
\begin{tabular}{cccc}
\hline
$d$ & $p$ & $G_0$ & $G$ contains a permutation with \\
 & & & at most four cycles?\\\hline 
 4 & 2 & $\Alt(5)$ or $\Sym(5)$ & y \\ 
 4 & 2 & $\Alt(6)$ or $\Sym(6) \cong \Sp_4(2)$ & y \\ 
 4 & 2 & $\Alt(7)$ & y\\ 
 6 & 2 & $\Sp_6(2)$ & y \\ 
 6 & 2 & $\Sym(8) \cong \GO_6^{+}(2)$ or $\GO_6^{-}(2)$ & y\\ 
 6 & 2 & $\Sym(7)$ & y \\ 
 6 & 2 & $3 \times \GL_3(2)$ & y \\ 
 6 & 2 & $\Sym(3) \times \GL_3(2)$ & y \\ 
 3 & 3 & $\Omega_3(3)\cong \Alt(4)$ or $\Omega_3(3)\times 2\cong \Alt(4)\times 2$ & y\\ 
 3 & 3 & $\SO_3(3)\cong \Sym(4) \quad \text{(two such groups)}$ & y\\ 
 3 & 3 & $\GO_3(3)\cong \Sym(4)\times 2$ & y \\
 4 & 3 & $8.\Alt(5)$ or $8.\Sym(5)$ & y \\ 
 4 & 3 & $\CSp_4(3)$ & n \\ 
 4 & 3 & $\GO^{+}_4(3)$ or $\GO^{-}_4(3)$ & n \\ 
 4 & 3 & $\GL_2(3):(3\times \Sym(3))$ & n\\ 
 4 & 3 & $((2\times Q_8):2):5:4$ & n \\ 
 4 & 3 & $\GL_2(3):\Sym(4)$ & n \\ 
 4 & 3 & $2.\PGL_2(9)$ & n \\
 4 & 3 & $2^{1+4}.\Alt(5):2$ & n \\
 4 & 3 & $(\mathrm{SA}_{16}:2):3$ & n \\ 
 4 & 3 & $(\mathrm{SA}_{16}:2):6$ (two such groups) & n \\ 
 4 & 3 & $Q_8.\Sym(3):4$ & n \\ 
 4 & 3 & $\GL(2, 3):D_8$ & n \\ 
 2& 5 & $\SL_2(3):2$ & y \\ 
2& 5 & $\SL_2(3):4$ & y \\ 
2& 5 & $\Sym(3)$&y\\
2& 5 & $D_{12}$ &y\\ 
 2 & 7 & $\SL_2(3)$ & y \\
 2 & 7 & $3 \times \SL_2(3)$ & y \\
 2 & 7 &$3 \times \SL_2(3).2 = 3 \times 2.\Sym(4)$ &y \\ 
2&7&$D_{16}$& y\\
 2& 11 &$5 \times \GL_2(3)$ & y \\ 
 2& 11 & $5 \times \SL_2(5)$ & y \\ 
 2& 13 & $\SL_2(3):4$ or $3 \times \SL_2(3):4$& y \\ 
 \hline
 \end{tabular}
\caption{Primitive groups in Theorems~\ref{thm:HA}(4) and~\ref{thm:HA4cycles}(3)} \label{tab:HAgroups}.
\end{center}
\end{table}

\begin{table}[!h]
\begin{center}
\begin{tabular}{cccccllc}
\hline
Line & $d$ & $p$ & Conditions & $h$ & $|g|$ & $|g|$ vs. $|h|$ \\ \hline
 1 & $\ge 1$ & -- & $1\leq i\leq 3$ & $s_{d}^i$ & $(p^d-1)/i$&$|g|=|h|$\tabtwolabel{l1T2}\\
 & & & and $i\mid p^d-1$ & & \\
 2 & $\ge2$ & -- & $1\leq i \leq 3$ & $J_1\pl s_{d-1}^i$ & $(p^{d}-p)/i$ & $|g|=p|h|$ \tabtwolabel{l2T2}\\ 
 & & & and $i\mid p^{d-1}-1$ & & & \\
 3 & $1$ & -- & & $J_1$ & $p$ &$|g|=p|h|$\tabtwolabel{l3T2}\\
\hline
\end{tabular}
\end{center}
\caption{Arbitrary $p$}\label{tab:neworders1}
\end{table}

\begin{table}[!h] 
\begin{center}
\begin{tabular}{cccccllc}
 \hline
 Line & $p$ & $d$ & Conditions & $h$ & $|g|$ & $|g|$ vs. $|h|$ \\\hline
 1 & $3$ & $\ge 3$ & & $J_1\pl s_{d-1}$ & $3^{d-1}-1$&$|g|=|h|$ \tabthreelabel{line1T3} \\
 2 & $3$ & $\ge 3$ & $t=2$, $(d_1,d_2)=1$ & $s_{d_1}\pl s_{d_2}$ & $\frac{1}{2}(3^{d_1}-1)(3^{d_2}-1)$&$|g|=|h|$ \tabthreelabel{line2T3} \\
 3 & $3$ & $\ge 4$ & $i=1,2$ & $(s_{1}\otimes J_2)\pl s_{d-2}^{i}$ & $3(3^{d-2}-1)$ &$|g|=|h|$ \tabthreelabel{line3T3} \\
 & & & $d$ odd if $i=2$ & & \\
 4 & $3$ & $\ge 4$ & & $J_2\pl s_{d-2}$ & $3(3^{d-2}-1)$ &$|g|=|h|$ \tabthreelabel{line6T3}\\
 5 & $3$ & $\ge 4$ & $h' \in \GL_{d-1}(p)$ as $h$ in & $J_1\pl h'$ & $3|h'|$ &$|g|=p|h|$ \tabthreelabel{line4T3} \\
 & & & lines~\ref{line1T3}, \ref{line2T3} & & & \\
 6 & $3$ & $\ge 5$ & & $J_3\pl s_{d-3}$ & $9(3^{d-3}-1)$ &$|g|=p|h|$ \tabthreelabel{line5T3}\\\hline

 7 & $2$ & $\ge 3$ & $t \ge 2$, $d_1=1$ with & $ \bigoplus_j s_{d_j}$ & $\prod (2^{d_j}-1)$ &$|g|=|h|$ \tabthreelabel{line8T3} \\
 & & & $d_2,\ldots,d_t \ge 2$ coprime & & & \\
 8 & $2$ & $\ge 5$ & $t\ge 2$, with & $ \bigoplus_j s_{d_j}$ & $\prod (2^{d_j}-1)$ & $|g|=|h|$ \tabthreelabel{line9T3} \\
 & & & $d_j\ge 2$ coprime & & \\
 9 & $2$ & $\ge 4$ & $d_1=4$, and for $j\ge 2$ & $(s_2\otimes J_2 )\pl \bigoplus_{j\ge 2} s_{d_j}$ & $6 \prod (2^{d_j}-1)$ & $|g|=|h|$ \tabthreelabel{line10T3} \\
 & & &$d_j\ge 3$ odd and coprime & & \\
 10 & $2$ & $\ge 5$ & $d_1=3$, and for $j \ge 2$ & $J_3 \pl\bigoplus_{j\ge 2} s_{d_j}$ & $4 \prod (2^{d_j}-1)$ & $|g|=|h|$ \tabthreelabel{line11T3} \\
 & & & $d_j\ge 2$ and coprime & & &\\
 11 & 2&$\ge 4$ & $t\ge 2$, and for $j \ge 2$ & $J_2 \pl\bigoplus_{j\ge 2} s_{d_j}$ & $2 \prod (2^{d_j}-1)$ & $|g|=|h|$ \tabthreelabel{line12T3} \\
 & & & $d_j\ge 2$ and coprime & & &\\
 12 & $2$ & $\ge 5$ & $t\ge 2$, $d_j \ge 2$ and coprime & $ \bigoplus_j s_{d_j}$ & $\frac{1}{3}\prod (2^{d_j}-1)$ & $|g|=|h|$ \tabthreelabel{line14T3} \\
 & & & except $(d_1,d_2)=2$ & & & \\
 $ 13$ & $2$ & $\ge 5$ & $t\ge 2$, $d_1\geq 4$ even, & $s_{d_1}^3 \pl \bigoplus_{j\geq2} s_{d_j}$ & $\frac{1}{3}\prod (2^{d_j}-1)$ & $|g|=|h|$ \tabthreelabel{line14primeT3} \\
 & & & $d_j \ge 2$ and coprime & & & \\
 14 & $2$ & $\ge 4$ & $h' \in \GL_{d-1}(2)$ as $h$ & $J_1 \pl h'$ & $2|h'|$& $|g|=p|h|$ \tabthreelabel{line15T3} \\
 & & & in lines~\ref{line8T3}, \ref{line9T3}, \ref{line14T3}, \ref{line14primeT3} & & &\\
 15 & $2$ & $\ge 5$ & $h' \in \GL_{d-2}(2)$ as $h$ & $J_2 \pl h'$ & $4|h'|$& $|g|=p|h|$ \tabthreelabel{line16T3} \\
 & & & in lines \ref{line8T3}, \ref{line9T3}, \ref{line14T3}, \ref{line14primeT3} & & \\
 16 & $2$ & $\ge 5$ & $h' \in \GL_{d-4}(2)$ as $h$ in & $J_4 \pl h'$ & $8|h'|$ & $|g|=p|h|$ \tabthreelabel{line17T3} \\
 & & & line \ref{line9T3} or in Table~\ref{tab:neworders1} line~\ref{line1T3} & & & \\
 17 & $2$ & $\ge 4$ & $i=1,3$ & $J_2\pl s_{d-2}^{i}$ & $4(2^{d-2}-1)/i$ & $|g|=p|h|$ \tabthreelabel{line18T3}\\
 \hline
\end{tabular}
\end{center}
\caption{Other infinite families $p=2$, $3$}\label{tab:neworders2}
\end{table}
\begin{table}[!h]
\begin{center}
\begin{tabular}{cccccllc}
 \hline
 Line & $p$ & $d$ & Conditions & $h$ & $|g|$ & $|g|$ vs. $|h|$ \\ \hline
 1 & $\ge 3$ & $2$ & $1\leq i \leq 3$ & $s_1^{i} \otimes J_2$ & $p(p-1)/i$ & $|g|=|h|$ \tabfourlabel{line1T4} \\ 
 & & & and $i\mid p-1$ & & &\\
 2 & $3$ & $4$ & & $s_{2}\otimes J_2$ & $24$ & $|g|=|h|$ \tabfourlabel{line2T4} \\
 3 & $2,3$ & $p$ & & $J_p$ & $p^2$ &$|g|=p|h|$ \tabfourlabel{line3T4}\\
 4 & $2$ & $2,3,4,5$ & & $J_d$ & $2,4,4,8$ &$|g|=|h|$ \tabfourlabel{line4T4}\\
 5 & $2$ & $3$ & & $J_2\pl J_1=J_2\pl s_1$ & $2$ &$|g|=|h|$ \tabfourlabel{line9T4}\\
 6 & $2$ & $4$ & & $J_3\pl J_1$ & $4$ &$|g|=|h|$ \tabfourlabel{line11T4}\\ 
 7 & $2$ & $4$ & & $J_4$ & $8$ & $|g|=p|h|$ \tabfourlabel{line5T4} \\
 8 & $2$ & $4$ & & $J_2 \pl J_2$ & $4$ &$|g|=p|h|$ \tabfourlabel{line6T4}\\
 9 & $2$ & $4$ & & $J_2 \pl J_1 \pl J_1$ & $4$ &$|g|=p|h|$ \tabfourlabel{line7T4}\\
 10 & $2$ & $3$ & & $J_2\pl J_1=J_2\pl s_1$ & $4$ &$|g|=p|h|$ \tabfourlabel{line8T4}\\
 11 & $2$ & $3$ & & $J_1 \pl J_1 \pl J_1$ & $2$ &$|g|=p|h|$ \tabfourlabel{line10T4}\\\hline
\end{tabular}
\end{center}
\caption{Sporadic cases}\label{tab:neworders3}
\end{table}

\begin{remark}\label{rem:tables}{\rm
\begin{enumerate}
\renewcommand{\theenumi}{\alph{enumi}}
\item The notation used in Tables~2--7 is explained in Notation~\ref{not}.

\item The elements in Table~\ref{tab:neworders3}, line \ref{line10T4}, with $h=J_1\oplus s_2^3$ also occur in Table~\ref{tab:neworders1}, line 2 with $(p,d,i)=(2,3,3)$.

\item The elements in Table~\ref{tab:neworders3}, line 8, with $h=J_2\oplus s_2^3$ also occur in Table~\ref{tab:neworders2}, line 18 with $(d,i)=(4,3)$.


\item Not all Singer cycles $s_a$ in $\GL_a(p)$ are conjugate. Thus a line containing Singer cycles or their powers may represent several conjugacy classes of examples.
\end{enumerate}
}
\end{remark}

\section{Notation and preliminary observations} \label{not:HA}
Given a positive integer $d$ and a prime $p$, we denote by $V$ the $d$-dimensional 
vector space of row vectors over the field $\mathbb{F}_p$ of size $p$, and choose a basis $\{e_1,\dots,e_d\}$. As in Section~\ref{sec:intro} we represent an element 
$g\in \AGL(V)$ uniquely as $g=t_vh$ with $t_v:x\mapsto x+v$ a translation, for
some $v\in V$, and with $h\in \GL(V)$ (where $t_v$ is performed first). We first seek conditions on $v$ and $h$ so 
that $g$ has order at least $n/4=p^{d}/4$ and then, using these results as a starting point, we find conditions so that $g$ has at most 4 cycles in its action on
$V$ (including the zero vector). We let $|g|$ denote the order of the element $g$.

We use the following notation and information.

\begin{notation}\label{not}{\rm
\begin{enumerate}
\renewcommand{\theenumi}{\alph{enumi}}
\item We denote by $I$ the identity element of $\GL(V)$. For each $r \ge 1$ and $h\in \GL(V)$ define $h(r)$ by
\begin{equation}\label{hk}
h(r)=I+h+\dots+h^{r-1}.
\end{equation}\label{ya}

\item For $v'\in V$ and $g=t_vh$, the $g$-cycle containing $v'$ consists precisely of the vectors 
\[\{v'h^{r-1} + vh(r) - v\mid r \ge 1 \}.\]\label{yo}

\item For $1\leq j\leq d$, let $s_j$ denote a generator of a Singer cycle in $\GL_{j}(p)$ (an element of order $p^j-1$), and let $J_j$ denote
the cyclic unipotent element of $\GL_{j}(p)$ acting on $\langle e_1,\dots,e_j\rangle$ sending
$e_i$ to $e_i+e_{i+1}$ for $i<j$ and fixing $e_j$. We suppress the parameter $p$ as it
will be clear from the context. For convenience we also let $J_0$ denote the identity on the zero vector space.

If we write, for example, $h=J_j\pl J_i$ we will mean that $h$ acts as $J_j$ on
$\langle e_1,\dots,e_j\rangle$, and as $J_i$ on $\langle e_{j+1},\dots,e_{j+i}\rangle$ in the sense of mapping
$e_{j+s}$ to $e_{j+s+1}$ for $1\leq s<i$ and fixing $e_{j+i}$.\label{yi}

\item In Table~\ref{tab:neworders2}, wherever the notation $d_j$ is used, we have $1 \le j \le t$, and $\sum_{j=1}^t d_j=d$.

\item Whenever $h_j \in \GL(V)$ is indecomposable (where $V$ has dimension $d_j$) and we write $h_j=s_ju_j(=u_js_j)$, this indicates the Jordan decomposition with $u_j$ unipotent and $s_j$ semisimple. Since $h_j$ is indecomposable, $V=\oplus_{i=1}^{m_j} W_i$ is a sum of $m_j$ pairwise isomorphic irreducible $\mathbb{F}_p\la s_j\ra$-submodules of dimension $d_j' = d_j/m_j$. If we have $h$ instead of $h_j$, we omit the subscript $j$ throughout this notation.
\end{enumerate}}
\end{notation}

We start by recalling~\cite[Lemma~$2.2$]{GMPSorders}. (Here $\log_p(x)$ denotes the logarithm of $x$ to the base $p$ and $\lceil x\rceil$ denotes the least integer $k$ satisfying 
$x\leq k$.)

\begin{lemma}\label{uni}
Let $u$ be a unipotent element of $\GL_d(p^{f})$ where $p$ is
prime and $f\geq 1$. Then $|u|\leq p^{\lceil \log_p(d)\rceil}$ and equality holds if and only if the Jordan decomposition of $u$ has a block of size $b$ such that $\lceil\log_p(d)\rceil=\lceil\log_p(b)\rceil$.
\end{lemma} 

If $g=t_vh\in \AGL_d(p)$, then $|g|$ is either $|h|$ or $p|h|$, and Lemma~\ref{lem:gorder} explains which one holds.

\begin{lemma}\label{lem:gorder}
Let $h\in \GL(V)$ of order $k$ and let $h(k)$ be as in {\rm(\ref{hk})}.
Then
\begin{itemize}
\item[(a)] $(vh(k))h=vh(k)$, for every $v\in V$;
\item[(b)] $g=t_vh$ has order $pk$ if and only if $vh(k)\ne 0$, and in this case $g^k=t_{vh(k)}$;
\item[(c)] the following are equivalent:
\begin{itemize}
 \item[(i)] there exists $v$ such that $t_vh$ has order $pk$;
 \item[(ii)] $h(k)\ne0$;
\item[(iii)] the minimal polynomial $m_h(x)$ of $h$ is of the form $(x-1)^{(k)_p}f(x)$ for
some polynomial $f(x)$ coprime to $x-1$, where $(k)_p$ denotes the $p$-part of $k$.
\end{itemize}
\end{itemize}
\end{lemma}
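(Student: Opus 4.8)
The plan is to treat the three parts in turn; parts (a) and (b) follow from a direct computation with powers of $g$, and the essential work lies in the equivalence (ii)$\iff$(iii) of part (c). Writing the action on the right, $g\colon x\mapsto(x+v)h=xh+vh$, an easy induction (equivalently, the cycle description in Notation~\ref{not}(b)) gives $g^r\colon x\mapsto xh^r+vh(r+1)-v$ for all $r\ge1$. Part (a) is then immediate: since $h^k=I$, as endomorphisms of $V$ we have $h(k)h=h+h^2+\dots+h^k=I+h+\dots+h^{k-1}=h(k)$, so $(vh(k))h=vh(k)$ for every $v$. For part (b), evaluating the power formula at $r=k$ and using $h^k=I$ gives $vh(k+1)-v=vh(k)$, hence $g^k=t_{vh(k)}$. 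The natural projection $\AGL(V)\to\GL(V)$ with kernel $T$ sends $g$ to $h$, so $k=|h|$ divides $|g|$; on the other hand a nonzero translation has order $p$. Thus if $vh(k)=0$ then $g^k=1$ and $|g|=k$, while if $vh(k)\ne0$ then $g^k$ has order $p$, so $|g|$ divides $pk$ and is a multiple of $k$; as $p$ is prime this forces $|g|\in\{k,pk\}$, and since $g^k\ne1$ we conclude $|g|=pk$. This proves (b) and also gives (i)$\iff$(ii), since $vh(k)\ne0$ for some $v$ exactly when the linear map $h(k)$ is nonzero.

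The heart of the proof is (ii)$\iff$(iii). I would observe that $h(k)=P(h)$ where $P(x)=1+x+\dots+x^{k-1}=(x^k-1)/(x-1)$, so $h(k)=0$ if and only if $m_h(x)\mid P(x)$. Write $k=(k)_p\,k'$ with $(k)_p=p^a$ the $p$-part and $p\nmid k'$. In characteristic $p$ the Frobenius identity gives $x^k-1=(x^{k'}-1)^{p^a}$, and since $x^{k'}-1$ is separable with $x-1$ a simple factor, we may write $x^{k'}-1=(x-1)q(x)$ with $q$ coprime to $x-1$. Hence $x^k-1=(x-1)^{p^a}q(x)^{p^a}$ and $P(x)=(x-1)^{p^a-1}q(x)^{p^a}$, with $q(x)^{p^a}$ coprime to $x-1$. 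As $h$ has order $k$ we have $m_h\mid x^k-1$, so $m_h(x)=(x-1)^bf(x)$ with $f$ coprime to $x-1$, with $0\le b\le p^a$, and with $f\mid q^{p^a}$. Comparing the exact power of $(x-1)$ on each side shows $m_h\mid P$ if and only if $b\le p^a-1$; equivalently $h(k)\ne0$ if and only if $b=p^a=(k)_p$, which is precisely the assertion that $m_h(x)=(x-1)^{(k)_p}f(x)$ with $f$ coprime to $x-1$. This is (iii).

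I expect the only genuine obstacle to be the bookkeeping in (ii)$\iff$(iii): one must use that $b\le p^a$ holds automatically from $m_h\mid x^k-1$, so that the condition $m_h\mid P$ collapses to the single inequality $b\le p^a-1$, and that the factor $q(x)^{p^a}$ never obstructs divisibility since $f\mid q^{p^a}$ already. The key algebraic input is the characteristic-$p$ factorisation $x^k-1=(x-1)^{(k)_p}q(x)^{(k)_p}$, after which the whole equivalence reduces to comparing the multiplicity of $(x-1)$ in $m_h$, in $P$, and in $x^k-1$.
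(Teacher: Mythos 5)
Your proof is correct and follows essentially the same route as the paper's: both parts (a) and (b) come from the computation $g^k=t_{vh(k)}$ together with $h(k)h=h(k)$, and the equivalence (ii)$\iff$(iii) rests on the same characteristic-$p$ factorisation $x^k-1=(x-1)^{(k)_p}q(x)^{(k)_p}$ with $q$ coprime to $x-1$, reducing everything to the multiplicity of $x-1$ in $m_h$ versus in $(x^k-1)/(x-1)$. The only cosmetic difference is that you prove (ii)$\iff$(iii) directly while the paper cycles through (i)$\Rightarrow$(ii)$\Rightarrow$(iii)$\Rightarrow$(i).
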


\begin{proof}
Observe that the element $h(k)$ defined in (\ref{hk}) can
also be written as $h(k)= I+h^{-1}+\dots +h^{-k+1}$, and that
$g^k=(t_vh)^k=t_u$, where $u=v+vh^{-1}+\dots +vh^{-k+1}=vh(k)$.
Also $h(k)(I-h)=I-h^{k}=0$ and so $u=vh(k)=vh(k)h
=(vh(k))h=uh$, proving (a).
Since $g^k=t_u$, $|g|=pk$ if and only if $u\ne0$, proving (b).

Now we prove part (c). Suppose that (i) holds and let $v\in V$ be such that $|t_vh|=pk$. Then by part (b), we have $vh(k)\neq 0$ and hence $h(k)\ne0$, so (ii) holds.
Next suppose that (ii) holds and let $v\in V$ be such that $vh(k)\neq 0$. Since $h(k)\neq 0$, $m_h(x)$ does not divide
$x^{k-1}+ x^{k-2} + \cdots +1$, but since $h$ has order $k$, $m_h(x)$ divides $x^k-1$.
Write
$k = (k)_pm$ and observe that
 \begin{eqnarray*}
x^k-1 &=& (x^m-1)^{(k)_p} = (x-1)^{(k)_p}(x^{m-1} + \cdots +x +1)^{(k)_p}\\
 &=& (x-1)^{(k)_p}\left
(\prod_{\lambda \ne 1, \lambda^m=1} (x-\lambda)
\right)^{(k)_p}.
 \end{eqnarray*}
Since $m_h(x)$ does not divide $(x^k-1)/(x-1)$, we see that $m_h(x) = (x-1)^{(k)_p}f(x)$, where $f(x)$ divides $(x^{m-1} + \cdots +x +1)^{(k)_p}$, and hence is coprime to $x-1$, proving (iii).

Finally suppose that (iii) holds with $m_h(x)=(x-1)^{(k)_p}f(x)$ for some polynomial $f(x)$ coprime to $x-1$.
Since, as we showed above, $x-1$ has multiplicity $(k)_p$ in $x^k-1$, the polynomial $m_h(x)$ does not divide
$\ell(x)=(x^k-1)/(x-1)$. Hence $\ell(h)$, which equals $h(k)$, is not the zero map, and so there exists $v \in V$ such that $v h(k) \ne 0$. By part (b), $t_vh$ has order $pk$, and (i) holds.
\end{proof}

We give a useful corollary for the case where $g=t_vh$ has order $p|h|$.

\begin{corollary}\label{cor:kp}
If $g = t_v h$ has order $p|h|=pk$, then $h$ is conjugate to $J_{(k)_p} \pl h'$
for some $h' \in \GL_{d-(k)_p}(p)$.
\end{corollary}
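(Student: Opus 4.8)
The plan is to read off the relevant structural information about $h$ directly from its minimal polynomial, which Lemma~\ref{lem:gorder} already provides. Since $g = t_v h$ has order $p|h| = pk$, statement (i) of Lemma~\ref{lem:gorder}(c) holds, and hence so does statement (iii): the minimal polynomial of $h$ factors as $m_h(x) = (x-1)^{(k)_p} f(x)$ with $f(x)$ coprime to $x-1$. The key quantitative content I want to extract is that the irreducible factor $x-1$ occurs in $m_h(x)$ with multiplicity exactly $(k)_p$.

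Next I would translate this into the Jordan (equivalently, rational canonical) block structure of $h$ over $\mathbb{F}_p$. Regarding $V$ as a finitely generated $\mathbb{F}_p[x]$-module with $x$ acting as $h$, I take its primary decomposition. The multiplicity of the linear irreducible $x-1$ in $m_h(x)$ equals the largest exponent $a$ such that $\mathbb{F}_p[x]/(x-1)^{a}$ appears as a summand of the $(x-1)$-primary component; equivalently, it is the size of the largest Jordan block of $h$ with eigenvalue $1$. Consequently $h$ has at least one Jordan block of size exactly $(k)_p$ and eigenvalue $1$, and by the description in Notation~\ref{not}(c) such a block is precisely a copy of $J_{(k)_p}$.

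Finally, using that $h$ is conjugate to the direct sum of its indecomposable blocks, I would split off one block equal to $J_{(k)_p}$ and gather the remaining blocks into a single transformation $h'$ acting on a complementary subspace of dimension $d - (k)_p$. This exhibits $h$ as conjugate to $J_{(k)_p} \pl h'$ with $h' \in \GL_{d-(k)_p}(p)$, as required. The only point needing care—and the nearest thing to an obstacle—is the standard identification of the $(x-1)$-multiplicity in $m_h(x)$ with the maximal eigenvalue-$1$ block size; once this is in place the rest is routine bookkeeping over $\mathbb{F}_p$, with no need to pass to a field extension since the relevant eigenvalue $1$ already lies in $\mathbb{F}_p$.
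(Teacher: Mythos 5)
Your proposal is correct and follows essentially the same route as the paper: both arguments invoke Lemma~\ref{lem:gorder}(c) to see that $x-1$ occurs in $m_h(x)$ with multiplicity exactly $(k)_p$, identify this multiplicity with the largest size of an eigenvalue-$1$ Jordan block (so that a block $J_{(k)_p}$ must occur), and then split that block off via the standard decomposition of $V$ as an $\mathbb{F}_p[x]$-module (the paper cites Hartley--Hawkes, Theorem~8.2, for this last step). No gaps.
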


\begin{proof}
Suppose $|g|=p|h|$ and write $k=|h|$. By Lemma~\ref{lem:gorder}(c), $(x-1)^{(k)_p}$ divides $m_h(x)$.
It follows that there exists $v\in V$ such that the $\mathbb{F}_p\langle h\rangle$-submodule generated by $v$ is cyclic of dimension
$(k)_p$, and $h$ induces $J_{(k)_p}$ on it. By Lemma~\ref{uni}, we have $|J_{(k)_p}|=(k)_p$.
Since $(x-1)^{(k)_p+1}$ does not divide $m_h(x)$, the map $h$
does not involve $J_{(k)_p+1}$ by Lemma~\ref{uni}, and hence by~\cite[Theorem~8.2]{HH}, $h$ is conjugate to $ J_{(k)_p} \pl h'$
for some $h' \in \GL_{d-(k)_p}(p)$.
\end{proof}

\begin{lemma}\label{sub-restrict}
Let $g=t_vh\in\AGL(V)$ and let $U$ be the $(x-1)$-primary component of the $\mathbb{F}_p\langle h\rangle$-module $V$. Then $g$ is conjugate to $t_uh$ for some $u\in U$. In particular, if $h$ is fixed point free on $V \setminus \{0\}$, then $g$ is conjugate to $h\in\GL(V)$.
\end{lemma}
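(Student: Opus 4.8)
The plan is to exploit conjugation by translations, since these move the translation part $v$ through a controlled affine subspace while fixing the linear part $h$. First I would compute the effect of conjugating $g=t_vh$ by an arbitrary translation $t_w$. Using the defining action $x^{t_vh}=(x+v)h=xh+vh$ (with $t_v$ applied first), a direct computation gives
\[
t_w^{-1}\,(t_vh)\,t_w = t_{v'}h, \qquad v' = v - w(I - h^{-1}).
\]
As $w$ ranges over $V$, the vector $v'$ ranges over the coset $v + (I-h^{-1})V$. I would then observe that $(I-h^{-1})V = (I-h)V$: indeed $I-h^{-1}=-h^{-1}(I-h)$, the subspace $(I-h)V$ is an $\mathbb{F}_p\la h\ra$-submodule (hence $h^{-1}$-invariant), so applying the invertible map $h^{-1}$ does not change it. Thus $t_{v'}h$ is conjugate to $t_vh$ for every $v'\in v+(I-h)V$.

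Next I would bring in the module structure. Regarding $V$ as an $\mathbb{F}_p\la h\ra$-module, the primary decomposition over the PID $\mathbb{F}_p[x]$ (via $x\mapsto h$) gives $V = U\oplus W$, where $U$ is the $(x-1)$-primary component and $W$ is the sum of the remaining primary components, both $h$-invariant. On $W$ the operator $h-I$ has no eigenvalue $0$, so $(I-h)|_W$ is invertible and therefore $W=(I-h)W\subseteq (I-h)V$. Writing $v=v_U+v_W$ with $v_U\in U$ and $v_W\in W$, we have $v_W\in (I-h)V$, so $v_U=v-v_W$ lies in the coset $v+(I-h)V$. By the previous paragraph $t_{v_U}h$ is conjugate to $t_vh$, and setting $u:=v_U\in U$ proves the first assertion.

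For the final statement, $h$ being fixed point free on $V\setminus\{0\}$ means $\ker(h-I)=0$, so the $(x-1)$-primary component satisfies $U=0$; hence the $u\in U$ produced above is $0$, and $g$ is conjugate to $t_0h=h\in\GL(V)$. I do not expect a genuine obstacle here: the argument is a short computation combined with the standard primary decomposition, and the only points requiring care are the sign bookkeeping in the conjugation formula and the identity $(I-h^{-1})V=(I-h)V$, both of which are routine once the $h$-invariance of $(I-h)V$ is noted.
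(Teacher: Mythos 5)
Your proof is correct and follows essentially the same route as the paper: both conjugate $g$ by a translation $t_{w'}$ and use that $I-h^{-1}$ is invertible on the complement $W$ of the $(x-1)$-primary component, the only cosmetic difference being that you describe the full orbit of $v$ as the coset $v+(I-h)V$ while the paper directly solves $w=w'-w'h^{-1}$ for $w'\in W$.
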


\begin{proof}
Let $V=U\oplus W$ be an $h$-invariant decomposition (so $W$ is the direct sum of the other primary components, if any).
Then $h|_W$ is fixed point free, so also $(h^{-1})|_W$ is fixed point free and in particular $(I-h^{-1})|_W$ is nonsingular. Observe that from Lemma~\ref{lem:gorder}(a) we have $vh(k)\in U$. Now
$v=u+w$, for some $u\in U$ and $w\in W$, and $vh(k)=(u+w)h(k)=uh(k)+wh(k)$ with
$uh(k)\in U$ and $wh(k)\in W$. Thus $wh(k)=vh(k)-uh(k)\in U\cap W$, so $wh(k)=0$. Since
$(I-h^{-1})|_W$ is nonsingular, there exists $w'\in W$ such that $w=w'-w'h^{-1}$, and
hence we have 
\[
t_{w'}^{-1}(t_vh)t_{w'}=t_{v-w'} (ht_{w'}h^{-1})h=t_{v-w'+w'h^{-1}}h=t_{v-w}h=t_{u}h.
\]
Note that if $h$ is fixed point free (that is, $U=0$), then we have $u=0$ and therefore $g$ is conjugate to $h\in\GL(V)$.
\end{proof}

\begin{enumerate}
\item[(f)] We add to our Notation~\ref{not} the subspaces $U$ and $W$ as defined in the statement and in the proof of Lemma~\ref{sub-restrict}, and define the integer $a$ by the equation
\[
 |U|=p^a.
\]
Since conjugate permutations have the same order and the same cycle structure we may, because of Lemma~\ref{sub-restrict}, assume from now on that $v\in U$.

\item[(g)]For a finite group $G$, we write $\meo(G)=\max\{|g|\mid g\in G\}$ for the maximal order of the elements of $G$. Given two natural numbers $n$ and $m$, we write $(n,m)$ for the greatest common divisor of $n$ and $m$.
\end{enumerate}
\section{Proof of Theorem~\ref{thm:orders}}

\begin{proof}[Proof of Theorem~$\ref{thm:orders}$~]
Suppose that $g=t_vh$ in $\AGL(V)=\AGL_d(p)$ has order $|g| \ge n/4$. We shall prove that $g$ and $h$ appear in one of the lines of Tables~\ref{tab:neworders1},~\ref{tab:neworders2} or~\ref{tab:neworders3}. Several times in the proof we use the facts that $\meo(\GL_{d}(p))=p^{d}-1$ and $\meo(\SL_{d}(p))=(p^{d}-1)/(p-1)$. A proof of these facts can be deduced, for example, from~\cite[Corollary~$2.7$]{GMPSorders} and for odd $p$ from~\cite[Table A.1]{KS}. 

\medskip\noindent
{\it The case $|g|=|h|$.}\quad 
First we assume that $|g|=|h|$.
We use the notation in Section~\ref{not:HA} and we suppose that $|h| \ge p^{d}/4$. 
Write $V=V_1\oplus V'$, where $V_1, V'$ are $h$-invariant and $h_1=h|_{V_1}$ is indecomposable; let $h'=h|_{V'}$ (possibly $V'=0$) so that $h=h_1\pl h'$, and let $k=\dim(V_1)$. 

\medskip\noindent
{\it Case $|g|=|h|$, $p\geq5$.}\quad We have $h^{p-1} = h_1^{p-1}\pl (h')^{p-1}$, and $h_1^{p-1}$, $(h')^{p-1}$ have determinant $1$. If $V' \ne 0$, then 
\[ 
|h^{p-1}| \le {\rm meo}({\rm SL}_k(p)) {\rm meo}({\rm SL}_{d-k}(p)) \le \frac{(p^k-1)(p^{d-k}-1)}{(p-1)^{2}} <\frac{p^{d}}{(p-1)^{2}}. 
\]
Hence $|h| < p^{d}/(p-1) \le p^{d}/4$, which is a contradiction since $p\ge5$. Hence $V'=0$ and $h=h_1$ is indecomposable. 


 If $h$ is irreducible then $h = s_d^{i}$ for some $i$ with $i\mid |s_d|$. As $|s_d|=p^d-1$, we have $1\leq i \leq 3$, as described in line~\ref{l1T2} of Table~\ref{tab:neworders1}. Suppose then that $h$ is not irreducible. Let $h=su$. Since $h$ is not irreducible, $m\geq 2$ and $u\ne1$. If $d'=1$, then by Lemma~\ref{uni} we get 
\[
|h| \le (p-1) p ^{\lceil \log_p(d) \rceil}.
\]
If $d \ge 6$, this gives no examples since
$(p-1) p ^{\lceil \log_p(d) \rceil} \le (p-1) dp < p^{d}/4.$ 
If $d\leq5$ then, since $p \ge 5$, $\lceil \log_p(d) \rceil=1$ and a direct calculation shows that $(p-1)p$ is less than $p^{d}/4$ unless $m=d=2$, which yields the example in line~\ref{line1T3} of Table~\ref{tab:neworders3}. 
Now assume that both $m,d' \ge 2$. Then $h$ has a conjugate lying in the subgroup $\GL_m(p^{d'})$. Under this conjugation, $s$ becomes a scalar matrix in $\GL_m(p^{d'})$ and $u$ becomes a unipotent element of $\GL_m(p^{d'})$. Applying Lemma~\ref{uni}, we have
\[
|h| \le (p^{d'}-1)p^{ \lceil \log_p(m) \rceil} 
\]
and $\lceil \log_p(m) \rceil \le m -1$ (see the proof of~\cite[Lemma~2.4]{GMPSorders}). Therefore, noting that $d'+m\leq d'm=d$ for integers $d',m\geq2$, we get
\[
|h| \le p^{d'} p^{m-1} = p^{d'+m-1} \le p^{d'm-1}< p^{d}/4 
\]
so there are no further examples when $p\ge5$.


\medskip\noindent
{\it Case $|g|=|h|$, $p=3$.}\quad
Arguing in the same way as in the first paragraph of ``Case $p\ge 5$'' we obtain that $V$ is not a direct sum of three non-zero $\mathbb{F}_p\la h\ra$-submodules.
First suppose that $h$ is indecomposable. If $h$ is semisimple then it is irreducible and so $h=s_d^i$ with $i\leq3$ as in line~\ref{l1T2} of Table~\ref{tab:neworders1}. Suppose now that $h$ is not semisimple and let $h=su$. Then $m \ge 2$ and
\[
|h| \le (3^{d'}-1)3^{\lceil \log_3(m) \rceil}.
\]
A direct calculation shows that this is less than $3^{d}/4$ unless $(d',m)=(1,2)$ or $(2,2)$; these cases yield the examples in lines~\ref{line1T4} and~\ref{line2T4} of Table~\ref{tab:neworders3}.

Finally suppose that $V=V_1\oplus V_2$ and $h=h_1\pl h_2$ with $h_i=h|_{V_i}$ indecomposable 
and $d_i=\dim(V_i)$ for $i=1,2$. If
 $h$ is semisimple then $h_1$ and $h_2$ are contained in Singer cycles. In this case if $(d_1,d_2) \ge 2$ then
 \[ 
 |h| \le {\rm lcm}\{3^{d_1}-1,3^{d_2}-1\} = (3^{d_1}-1)(3^{d_2}-1)/ (3^{(d_1,d_2)}-1) < 3^d/4.
 \]
 So $(d_1,d_2)=1$ and we have the examples in lines~\ref{line1T3} and~\ref{line2T3} of Table~\ref{tab:neworders2} (the condition $d\geq 3$ follows from a calculation). 
Now assume that $h$ is not semisimple. Then, replacing $h_1$ by $h_2$ if necessary, we may assume that $h_1=u_1s_1$ with $V_1$ and $m_1\ge 2$. Since $\meo(\GL_{d-d_1}(3))=3^{d-d_1}-1$ we have
 \[ 
 |h| \le |h_1| \cdot |h_2| \le (3^{d_1'}-1)3^{\lceil \log_3(m_1) \rceil} \meo(\mathrm{GL}_{d-d_1}(3))=(3^{d_1'}-1)3^{\lceil\log_3(m_1)\rceil}(3^{d-d_1}-1). 
 \]
Now a direct calculation shows that this is less than $3^d/4$ unless $(m_1,d_1')=(2,1),(2,2)$. In the second case, $h_1 \in \langle s_2 \otimes J_2 \rangle$ and so $|h^2| \le |h_1^2| |h_2^2| \le 12 \meo(\mathrm{SL}_{d-4}(3))$. But~\cite{KS} implies that $\meo(\SL_{d-4}(3))=(3^{d-4}-1)/2$, and hence $|h|\leq 12(3^{d-4}-1)$. However, since $d=4+d_2\geq5$, we have $12 (3^{d-4}-1) < 3^{d}/4$. Thus $(m_1,d_1')=(2,1)$ and $h_1 \in \langle s_1 \otimes J_2 \rangle$. If $h_2$ is semisimple then it is contained in a Singer cycle, giving the examples in lines~\ref{line3T3} (if $|h_1|=6$) and~\ref{line6T3} (if $|h_1|=3$) of Table~\ref{tab:neworders2} (the conditions $d\geq 4$ and $d$ odd, when $i=2$, follow from a calculation; note that $i=2, d=3$ gives $h=(s_1\otimes J_2)\oplus J_1$ of order $6 < 3^3/4$). 
 If $h_2$ is not semisimple, then $h_2=u_2s_2$ and $m_2 \ge 2$. We have
 \[ 
 |h| \le |h_1| \cdot |h_2| \le 6 (3^{d_2'}-1)3^{\lceil \log_3(m_2) \rceil} < 3^{d}/4 \]
 except for $d_2'=1$ and $m_2=2$. In this exceptional case, $|h_2|$ divides $6$ and, as $h_1\in\langle s_1\otimes J_2\rangle$, we have $|h|\leq 6<3^{d}/4$. Therefore there are no further examples when $p=3$.

\medskip\noindent
{\it Case $|g|=|h|, p=2$.}\quad
Suppose that $V=V_1\oplus\dots\oplus V_t$ and $h = h_1 \pl\cdots\pl h_t$, where the $V_i$ are $h$-invariant with $\dim(V_i)=d_i$, and each $h_i=h|_{V_i}$ is indecomposable. Also let $h_i=s_iu_i$.
 First suppose that $t=1$. If $h$ is semisimple then it is contained in a Singer cycle giving the examples in line~\ref{l1T2} of Table~\ref{tab:neworders1}. Suppose now that $u_1\ne1$, so that $m_1 \ge 2$. Then
 \[ 
 |h| \ \le (2^{d_1'}-1)2^{\lceil \log_2(m_1) \rceil} , 
 \]
 which is less than $2^d/4$ unless $(d_1',m_1)=(1,2),(1,3),(1,4),(1,5),(2,2)$; thus we have the examples in line~\ref{line4T4} of Table~\ref{tab:neworders3} and line~\ref{line10T3} of Table~\ref{tab:neworders2} (by taking $d=4$). 

 So we may assume $t \ge 2$. Observe that if $d=2$, then $h=h_1=h_2=1$ as in line~\ref{l1T2} of Table~\ref{tab:neworders1} (by taking $i=3$). Thus we may suppose that $d\geq 3$. If $h$ is semisimple, then each $h_i$ is irreducible 
and $|h| \le {\rm lcm} \{ 2^{d_i}-1 \mid i=1,\ldots,t\}$. If $(d_j,d_k) \ge 3$ for some distinct $j$ and $k$, then
 \[
 |h| \le {\rm lcm} \{ 2^{d_i}-1\mid i=1,\ldots,t \} \le \left(\prod_{i=1}^t (2^{d_i}-1)\right)/(2^{d_j}-1,2^{d_k}-1) < 2^{d}/7,
 \]
which is a contradiction. If there are at least three even $d_i$, then
 \[
 |h| \le {\rm lcm}\{ 2^{d_i}-1 \mid i=1,\ldots,t\} \le \left(\prod_{i=1}^t (2^{d_i}-1)\right)/(2^{2}-1)^{2} < 2^{d}/9,
 \]
again a contradiction. Moreover, as $d\geq 3$, if the fixed point subspace of $h$ has dimension at least $2$ (so at least two of the $d_i$ equal $1$), then $|h| \le\meo(\GL_{d-2}(2))= 2^{d-2}-1<2^{d}/4$. Thus $d\geq 3$, at most one $d_i$ can be $1$, at most 2 of the $d_i$ are even, and $(d_j,d_k)\leq 2$ for distinct $j,k$. Observe further that if $d_i=1$ and if $(d_j,d_k)=2$ for some distinct $j$ and $k$, then $|h|\leq (2^{d-1}-1)/3<2^{d-2}$: this shows that if $d_i=1$ for some $i$, then $(d_j,d_k)=1$ for distinct $j$ and $k$. The only such examples (with $t\ge 2$) are listed in lines~\ref{line8T3},~\ref{line9T3},~\ref{line14T3},~\ref{line14primeT3} of Table~\ref{tab:neworders2}. (Observe that in line~\ref{line14primeT3} we have $d_1\geq 4$ because $s_2^3=1$ fixes a subspace of dimension $2$.) 

Suppose now that $h$ is not semisimple. Then we may assume that $d_1$ is maximal such that $h_1=s_1u_1$ is non-semisimple, and that $m_1\geq2$. Now
 \[
 |h|\leq|h_1||h_2|\leq (2^{d_1'}-1)2^{\lceil\log_2(m_1)\rceil}\meo(\mathrm{GL}_{d-d_1}(2)) = (2^{d_1'}-1)2^{\lceil\log_2(m_1)\rceil}(2^{d-d_1}-1)
 \]
and a direct calculation shows that this is less than $2^d/4$ unless $(m_1,d_1')=(2,1)$, $(3,1)$ or $(2,2)$ (note that $(m_1,d_1')\in\{(2,1), (3,1), (4,1), (5,1), (2,2)\}$ if $d_1\leq 5$ from our work above). We consider each possibility for $(m_1,d_1')$.

 If $(m_1,d_1') =(2,2)$, then $h_1=s_2\otimes J_2$, $|h_1|=6$ and $h=h_1 \pl h'$. Clearly $h'$ must have odd order for otherwise $|h| = \lcm \{6, |h'|\}\le 3 |h'| < 3 \cdot 2^{d-4}< 2^{d}/4$. It follows that $h'$ is semisimple with irreducible blocks of dimensions $d_2, \ldots, d_t$. Note that if one of the $d_i$ ($i\geq2$) is even, then $2^{d_i} \equiv 1 \pmod{3}$ and we have
 \[|h| \le {\rm lcm} \{6, 2^{d_j}-1\mid j=2,\ldots,t\} \le 6 \cdot 2^{d-4}/3 < 2^{d}/4.\] 
 Hence each $d_i$ is odd for $i\geq2$. If two of the $d_i$ have a common factor $>1$, then similarly, we have
 \[
 |h| \le {\rm lcm} \{6, 2^{d_i}-1\mid i=2,\ldots,t\} \le 6 \cdot 2^{d-4}/3 < 2^{d}/4.
 \]
Therefore the $d_i$ must be pairwise coprime. A similar calculation shows that none of the $d_i=1$, and that each of the $h_i$ ($i\geq2$) is $s_{d_i}$ (and not a proper power). Thus we have the examples in line~\ref{line10T3} of Table~\ref{tab:neworders2}. 

If $(m_1,d_1') =(3,1)$, then $h_1=J_3$ and $|h_1|=4$. As in the previous case, $h=h_1\pl h'$ and $h'$ must have odd order. So $h'$ is semisimple with irreducible blocks of dimensions $d_2, \ldots, d_t$, and the same arguments show that the $d_i$ are pairwise coprime. If each $d_i \ge 2$ then we have the examples in line~\ref{line11T3} of Table~\ref{tab:neworders2}. If some $d_i=1$ then $t=2$ and we have the example in line~\ref{line11T4} of Table~\ref{tab:neworders3}.

It remains to consider the case $(m_1,d_1')=(2,1)$, where $h_1=J_2$ of order $2$. As before, $h=h_1\pl h'$ where $h'$ is semisimple, and the usual arguments give us the examples in line~\ref{line12T3} of Table~\ref{tab:neworders2} and line~\ref{line9T4} of Table~\ref{tab:neworders3}. 

\medskip\noindent
{\it The case $|g|=p|h|$.}\quad
We have now classified the examples with $|g|=|h|$. Henceforth we assume that $|g|=p|h|$. By Lemma~\ref{lem:gorder}, the power $(x-1)^{(k)_p}$ divides $m_h(x)$, where $k=|h|$. If $d=1$, then $h=1$ and we have the examples in line~\ref{l3T2} of Table~\ref{tab:neworders1}. For the rest of the proof we assume then $d\geq 2$.

\medskip\noindent{\it Case $|g|=p|h|, p\geq5$.}\quad
First suppose that $h$ is semisimple. We seek conditions on $h$ for which $|h| \ge p^{d-1}/4$. In this case, $x-1 \mid m_h(x)$ and so 
we can write $h=J_1\pl h'$ where $h' \in {\rm GL}_{d-1}(p)$ is semisimple. (Recall that $d-1\geq 1$.) By our previous work, the only semisimple elements $h' \in {\rm GL}_{d-1}(p)$ of order at least $p^{d-1}/4$ appear in line~\ref{l1T2} of Table~\ref{tab:neworders1}; 
thus the only examples of $h$ occur in line~\ref{l2T2} of Table~\ref{tab:neworders1}.
If $h$ is not semisimple then by Corollary~\ref{cor:kp} we have $h=J_{(k)_p}\pl h'$, and $(k)_p \ge 5$. In particular,
\[|h| \le (k)_p \meo({\rm GL}_{d-(k)_p}(p)) = (k)_p(p^{d-(k)_p}-1)<p^{d-1}/4 \]
in all cases since $(k)_p\ge 5$. So there are no further examples when $p \ge 5$.


\medskip\noindent
{\it Case $|g|=p|h|, p=3$.}\quad 
 If $h$ is semisimple then by Corollary~\ref{cor:kp} we can write $h=J_1\pl h'$ with $h' \in \GL_{d-1}(3)$ of order at least $3^{d-1}/4$. Therefore $h'$ is contained in line~\ref{l1T2} of Table~\ref{tab:neworders1} or lines~\ref{line1T3},~\ref{line2T3} of Table~\ref{tab:neworders2}; 
 so $h$ is as in line~\ref{l2T2} of Table~\ref{tab:neworders1} or line~\ref{line4T3} of Table~\ref{tab:neworders2}. 
 If $h$ is not semisimple then, by Corollary~\ref{cor:kp}, $h$ is of the form $J_{(k)_3} \pl h'$ (with $k\geq 3$). If $h=J_{(k)_3}$, then $|h|=(k)_3=d$ so $|g|=3d$ and $3^d/4 >3d$ for $d \ge 5$, so $h=J_3$ (line~\ref{line3T4} of Table~\ref{tab:neworders3}). 
Otherwise, $d >(k)_3$
 and
 \[ 
|h| \le (k)_3 (3^{d-(k)_3}-1),
\]
 which is less than $3^{d-1}/4$ unless $(k)_3=3$ and $d \ge 5$. So we may assume that $h=J_3\pl h'$ where $h' \in \GL_{d-3}(3)$. Now if $3$ divides $|h'|$ then $|h|=|h'| \leq \meo(\GL_{d-3}(3))=3^{d-3}-1<3^{d-1}/4$, which is a contradiction. So $h'$ is semisimple and therefore appears in line~\ref{l1T2} of Table~\ref{tab:neworders1} or lines~\ref{line1T3},~\ref{line2T3} of Table~\ref{tab:neworders2}. 
 However it is clear that $|h|<3^{d-1}/4$ if $h'$ is as in lines~\ref{line1T3},~\ref{line2T3} of Table~\ref{tab:neworders2}: so the only additional examples are in line~\ref{line5T3} of Table~\ref{tab:neworders2}. 


\medskip\noindent
{\it Case $|g|=p|h|, p=2$.}\quad Again we first suppose that $h$ is semisimple so that $h=J_1 \pl h'$ and $|h'| \ge 2^{d-1}/4$; that is, $h'$ is one of the semisimple examples in line~\ref{l1T2} of Table~\ref{tab:neworders1} or lines~\ref{line8T3},~\ref{line9T3},~\ref{line14T3} or~\ref{line14primeT3} of Table~\ref{tab:neworders2}; 
thus the only such examples are in line~\ref{l2T2} of Table~\ref{tab:neworders1}, or in line~\ref{line10T4} of Table~\ref{tab:neworders3} (arising from $h'$ as in line~\ref{l1T2} of Table~\ref{tab:neworders1} with $d=2$ and $i=3$), or in line~\ref{line15T3} of Table~\ref{tab:neworders2}. 
Now suppose that $h$ is not semisimple; so by Corollary~\ref{cor:kp}, $h=J_{(k)_2}\pl h'$ (with $k\geq 2$). If $h=J_{(k)_2}$, then $|h|=(k)_2=d$ so $|g|=2d$ and $2^d/4 >2d$ for $d \ge 6$, so $h=J_2$ or $J_4$ (lines~\ref{line3T4} and~\ref{line5T4} of Table~\ref{tab:neworders3}). Otherwise, $d > (k)_2$
 and
\[ 
|h| \le (k)_2 (2^{d-(k)_2}-1),
\]
which is less than $2^{d-1}/4$ unless $(k)_2=2$ or $4$. Suppose first that
$(k)_2=2$. Then 
$h=J_2\pl h'$, with $h'\in \GL_{d-2}(2)$. If $h'$ is semisimple then $|h|=2|h'|$ and $|h'|\geq 2^{d-2}/4$, so as in the previous paragraph, 
$h'$ is a semisimple element as in line~\ref{l1T2} of Table~\ref{tab:neworders1}, or lines~\ref{line8T3},~\ref{line9T3},~\ref{line14T3} or~\ref{line14primeT3} of Table~\ref{tab:neworders2}: 
 hence $h$ is one of the examples in lines~\ref{line16T3},~\ref{line18T3} of Table~\ref{tab:neworders2} or lines~\ref{line7T4},~\ref{line8T4} of Table~\ref{tab:neworders3}. 
 If $h'$ is not semisimple then $|h|=|h'|$, which is at least $2^{d-1}/4$ if and only if $h' \in \GL_{d-2}(2)$ is a non semisimple element of this order; by our previous work, this only occurs if $d=4$ and $h'=J_2$ (note that $h'$ cannot be $J_3$ from line~\ref{line4T4} of Table~\ref{tab:neworders3} since we are assuming $(k)_2=2$, but $k=|h|=4$ if $h=J_2\oplus J_3$); thus we have line~\ref{line6T4} of Table~\ref{tab:neworders3}. 
 Suppose now that $(k)_2=4$ so that $h=J_4\pl h'$. 
 If $h'$ is not semisimple then $|h|\le 2 |h'|$ and $2|h'|$ is at least $2^{d-1}/4$; this holds if and only if $h' \in \GL_{d-4}(2)$ is a non semisimple element of order at least $2^{d-3}$. There are therefore no such elements and we conclude that $h'$ is semisimple and $|h|=4|h'|$. Now $|h| \ge 2^{d-1}/4$ if and only if $|h'| \ge 2^{d-4}/2$ and the only such examples $h'$ occur in line 1 of Table~\ref{tab:neworders1} (with $i=1$ and $d-4 \ge 2$) and line~\ref{line9T3} of Table~\ref{tab:neworders2}; 
 thus we have the examples in line~\ref{line17T3} of Table~\ref{tab:neworders2}. 
\end{proof}

\section{Classification of elements with at most four cycles}

We now refine the list of affine transformations of order at least $n/4$ to determine those elements that have at most $4$ cycles in $V$. Recall the notation from Section~\ref{not:HA}, especially for $g=t_vh, V, U, W, p^a=|U|$, 
and assume that $g$ has at most four cycles in $V$. By Lemma~\ref{sub-restrict} we may assume that $v\in U$. We start with some further observations.

\subsection{$g$-invariant subsets of $V$}\label{sub-inv}
For each $h$-invariant subspace $V'$ of $V$, the subspace $U+V'$ is a $g$-invariant subset of
$V$ (recall that the subspace $U$ is defined in Notation~\ref{not}~(f)). In fact, for $u'+v'\in U+V'$, we have $(u'+v')g=(u'+v')t_vh=(v+u')h+v'h\in U+V'$.
In particular, taking $V'=0$, we see that $U$ is $g$-invariant.

\subsection{Three claims}\label{sub:4claims}

\medskip\noindent\emph{Claim~$1$:} Suppose that $V\ne U$, and let $W'$ be a
nontrivial $h$-invariant subspace of $W$.
Suppose that $g$ has $t$ cycles in $U$ and that $h$ has $r$ cycles in $W'$.
Then
\begin{enumerate}
 \item[(a)] $t\cdot r\leq 4$ with $t\geq1, r\geq2$;
\item[(b)] if $t\geq2$, then $t=r=2$, $W'=W$, and $h|_W$ is transitive on $W\setminus\{0\}$;
so $h=h|_U \pl s_{d-a}$.
\end{enumerate}

\begin{proof}[Proof of Claim $1$]Since $v\in U$, it follows that $U\oplus W'$ is
$g$-invariant. Let $w\in W'$ and $x\in U$. By Notation~\ref{not}~(b), the $g$-cycle containing $x+w$, where $x\in U, w\in W'$, consists precisely
of the vectors $xh^i+vh(i+1) - v + wh^{i}=x^{g^i}+w^{h^i}$, for $i\geq0$ (the equality can be easily proved by induction on $i$). This $g$-cycle is contained in
$x^{\langle g\rangle} + w^{\langle h\rangle}=\{x^{g^i}+w^{h^j}\,|\,\mbox{for all}\ i,j\}$.
It follows that there are at least $tr$ cycles of $g$ in $U\oplus W'$. Since $g$ has at most
4 cycles, this implies part (a), and, if $t\geq2$, then $t=r=2$, $W'=W$, and $\langle h\rangle$ is transitive 
on the non-zero vectors of $W'$.
\end{proof}

\medskip\noindent\emph{Claim $2$: } Let $|h|_U|=p^c$, $|g|=p^\delta k$ where $\delta=0, 1$, and
let $g$ have $t$ cycles in $U$. Then
\begin{equation}\label{orderg}
p^a\leq t|g|_U|= tp^{c+\delta}\leq \left\{ \begin{array}{ll}
 tp^{\delta}=t&\mbox{if}\ a=0,\ \mbox{that is, if $U=0$}\\
 tp^{\delta+\lceil \log_p(a)\rceil}&\mbox{if}\ a>0, \ \mbox{that is, if $U\ne0$}.\\
 \end{array}\right.
\end{equation}
The subspace $U$ is a single $g$-cycle if and only if either (i) $a=c=\delta=0$
and $h|_U=J_0$, or (ii) $\delta=1$,
$h|_U=J_a$ , and $a=1$ or $(a,p)=(2,2)$.

\begin{proof}[Proof of Claim $2$]
Since $g$ has $t\leq 4$ cycles in $U$, we have $p^a=|U|\leq t|g|_U|$. If $a=0$ then
$c=0$, $h|_U=J_0$, and $|g|_U|=p^\delta=1$. Thus if $a=0$ then the inequality~\eqref{orderg} holds, $U$ is a single $g$-cycle, and the conditions (i) hold. We may therefore assume that $a\geq1$. Then, by Lemma~\ref{uni},
$|g|_U|=p^\delta\,|h|_U|\leq p^{\delta+\lceil
\log_p(a)\rceil}$ with equality if
and only if $h|_U$ involves a cyclic matrix $J_b$ such that $\lceil \log_p(b)\rceil=\lceil
\log_p(a)\rceil$. In particular (\ref{orderg}) holds.

Suppose $U$ is a single $g$-cycle. Then (\ref{orderg}) holds with $t=1$, and hence $p^a\leq p^{\delta+\lceil\log_p(a)\rceil}$, that is, $a\leq \delta+\lceil \log_p(a)\rceil$. It follows from a computation, since $a\geq1$, that $\delta=1$ and either $1\leq a\leq 2$, or $(a, p)=(3, 2)$. If $a>1$, then from the inequalities $p^a\leq p^{c+1}\leq p^{1+\lceil \log_p(a)\rceil}$ in~\eqref{orderg}, we obtain $c=a-1$, that is, $|h|_U|=p^{a-1}$. Therefore,
by Corollary~\ref{cor:kp}, $h|_U$ involves $J_{(k)_p}$.
In particular if $(a,p)=(3,2)$ then $(k)_p\geq p^c=4$ but $h|_U$ does not involve $J_4$ because $U$ has dimension $3$ only.
Similarly if $a=2$ and $p$ is odd, then $(k)_p\geq p^c=p$, but $h|_U$
does not involve $J_p$ because $U$ has dimension $2$ only.
So $a=1$ or $(a,p)=(2,2)$, and in either case, $h|_U=J_a$ so part (ii) holds.

Conversely if $\delta=1$ and $h|_U=J_a$ with either $a=1$ or $(a,p)=(2,2)$, then $|h|_U|=p^{a-1}$
and so $|g|_U|=p^a=|U|$ so that $U$ must form a single $g$-cycle.
\end{proof}

\medskip\noindent\emph{Claim $3$:}
Suppose that there exist $h$-irreducible submodules $W_1, W_2$ of $W$, such that
$|W_i|=p^{a_i}$ with $0<a_1\leq a_2$
and $W_1\cap W_2=0$. Then $V=U\oplus W_1\oplus W_2$, $p=2$, $(a_1,a_2)=1$, $0\leq a\leq 2\leq
a_1<a_2$, $d\geq5$, and $h=J_a\pl s_{a_1}\pl s_{a_2}$ for some Singer cycles $s_{a_1}, s_{a_2}$. These elements have exactly four cycles and arise as examples in lines~\ref{line1T6} (if $a=0$), and~\ref{line7T6} (if $a>0$, with $v=e_1$) of Table~\ref{tab:new2}. 

\begin{proof}[Proof of Claim~$3$]
The subspace $V'=U\oplus W_1\oplus W_2\leq V$ is $g$-invariant, and
we have the following nonempty $g$-invariant subsets:
$U, (U\oplus W_i)\setminus U$ (for $i=1, 2$), and $V'\setminus
((U\oplus W_1)\cup (U\oplus W_2))$, of sizes $p^a$, $p^a(p^{a_i}-1)$
(for $i=1, 2$), and $p^a(p^{a_1}-1)(p^{a_2}-1)$.
Since $g$ has at
most four cycles, it follows that $V'=V$ and $\langle g\rangle$ acts transitively
on each of these subsets. 

Observe that if $(p^{a_1}-1,p^{a_2}-1)=\ell$, then $g$ induces at least $\ell$ cycles on $V\setminus((U\oplus W_1)\cup (U\oplus W_2))$. Thus
 $p^{a_1}-1$ and $p^{a_2}-1$ are coprime, and hence $p=2$ and $(a_1,a_2)=1$. Also transitivity
of $h$ on $W_i\setminus\{0\}$ implies that $h|_{W_i}$ is a Singer cycle $s_{a_i}$, and
by Claim~2, $h|_U=J_a$ and $a\leq2$. Since $(a_1,a_2)=1$ we have $a_1<a_2$ and since
$h|_{W_1}\ne 1$ (because $W_1\neq 0$ and the $1$-eigenspace of $h$ is contained in $U$), we have $a_1\geq2$. Thus $d=a+a_1+a_2\geq 5+a$, and the elements
are the examples with 4 cycles in lines~\ref{line1T6},~\ref{line7T6} of Table~\ref{tab:new2} 
(if $a\geq1$ we note that $g$ has a conjugate of the form $g=t_{e_1}h$).
\end{proof}

\subsection{Four cycles: proof of Theorem~\ref{thm:element4cycles}}

\begin{table}[t]
\begin{center}
\begin{tabular}{ccccll}
\hline
Line & $d$ & $\#$ cycles & $g$ & $|g|$ & Cycle lengths \\ \hline
 1& -- & $i+1$ & $s_{d}^i$ & $|h|$ & 1, and $i$ of length $\frac{p^{d}-1}{i}$ \tabfivelabel{line1T5} \\
 			&	 & 	($1 \le i \le 3$ and $i \mid p^d-1$)& & \\
 2 & $2$ & $3$ & $s_1\otimes J_2$ & $|h|$ & $1, p-1, p(p-1)$ \tabfivelabel{line2T5} \\
 3 & $1$ & $1$ & $t_{e_1}$ & $p|h|$ & $p$ \tabfivelabel{line3T5} \\
 4 & $\geq 2$ & $i+1$ & $t_{e_1}(J_1\pl s_{d-1}^i)$ & $p|h|$ & $p$, and $i$ of length $\frac{p(p^{d-1}-1)}{i}$ \tabfivelabel{line4T5}\\
 &				 & ($1 \le i \le 3$ and $i \mid p^{d-1}-1$)& & \\
\hline
\end{tabular}
\end{center}
\caption{At most $4$-cycles, arbitrary $p$}\label{tab:new1}
\end{table}

\begin{table}[t]
\begin{center}
\begin{tabular}{ccccll}
\hline
Line & $d$ & $\#$ cycles & $g$ & $|g|$ & Cycle lengths \\ \hline
 1 & $\geq 3$ & $4$ & $s_{a_1}\pl s_{a_2}$ & $|h|$ & $1, 2^{a_1}-1, 2^{a_2}-1$, \tabsixlabel{line1T6} \\
 & & & $(a_1,a_2)=1$, $d=a_1+ a_2$ & & $(2^{a_1}-1)(2^{a_2}-1)$ \\
 2 & $\geq3$ & $4$ & $J_1\pl s_{d-1}$ & $|h|$ & $1, 1, 2^{d-1}-1, 2^{d-1}-1$ \tabsixlabel{line2T6} \\
 3 & $\geq5$ & $4$ & $t_{e_1}(J_3\pl s_{d-3})$ & $|h|$ & $4, 4, 2^{d-1}-4, 2^{d-1}-4$ \tabsixlabel{line3T6} \\
 4 & $\geq 4$ & $4$ & $t_{e_1}(J_1\pl J_1\pl s_{d-2})$ & $p|h|$ & $2, 2$, $2^{d-1}-2, 2^{d-1}-2$ \tabsixlabel{line4T6}\\
 5 & $\geq 4$ & $i+1\in\{2,4\}$ & $t_{e_1}(J_2\pl s_{d-2}^i)$ & $p|h|$ & $4$, and $i$ of length $\frac{2^{d}-4}{i}$ \tabsixlabel{line5T6}\\
 &&$d$ even if $i=3$&&&\\
 6 & $\geq 5$ & $4$ & $t_{e_1}(J_2\pl J_1\pl s_{d-3})$ & $p|h|$ & $4, 4$, $2^{d-1}-4, 2^{d-1}-4$ \tabsixlabel{line6T6}\\
 7 & $\geq 6$ & $4$ & $t_{e_1}(J_a\pl s_{a_1}\pl s_{a_2})$ & $p|h|$ & $2^a, 2^a(2^{a_1}-1), 2^a(2^{a_2}-1)$, \tabsixlabel{line7T6} \\
 & & & $1\leq a\leq 2\leq a_1< a_2$ & & $2^a(2^{a_1}-1)(2^{a_2}-1)$ \\
 & & & $(a_1,a_2)=1$ & & \\
 8 & $\geq 6$ & $4$ & $t_{e_1}(J_4\pl s_{d-4})$ & $p|h|$ & $8, 8$, $2^{d-1}-8, 2^{d-1}-8$ \tabsixlabel{line8T6} \\
 \hline
\end{tabular}
\end{center}
\caption{At most $4$-cycles, other infinite families ($p=2$)}\label{tab:new2}
\end{table}

\begin{table}[t]
\begin{center}
\begin{tabular}{cccccll}
\hline
Line & $d$ & $p$ & $\#$ cycles & $g$ & $|g|$ & Cycle lengths \\ \hline
 1 & $2$ & $2,3$ & $p$ & $t_{e_1}$ & $p|h|$ & $p$ of length $p$ \tabsevenlabel{line8T7}\\
 2 & $2$ & $3$ & $3$ & $t_{e_1} J_2$ & $|h|$ & $3, 3, 3$ \tabsevenlabel{line3T7} \\
 3 & $3$ & $2$ & $2$ & $t_{e_1} J_3$ & $|h|$ & $4, 4$ \tabsevenlabel{line5T7} \\
 4 & $3$ & $2$ & $4$ & $t_{e_3}(J_2\pl J_1)$ & $|h|$ & $2, 2, 2, 2$ \tabsevenlabel{line7T7} \\
 5 & $3$ & $2$ & $4$ & $J_3$ & $|h|$ & $1,1,2,4$ \tabsevenlabel{line1T7} \\
 6 & $4$ & $2$ & $4$ & $t_{e_1}(J_3\pl J_1)$ & $|h|$ & $4, 4, 4, 4$ \tabsevenlabel{line6T7} \\
 7 & $4$ & $2$ & $4$ & $s_2\otimes J_2$ & $|h|$ & $1,3,6,6$ \tabsevenlabel{line2T7} \\
 8 & $5$ & $2$ & $4$ & $t_{e_1} J_5$ & $|h|$ & $8, 8, 8, 8$ \tabsevenlabel{line4T7} \\
 9 & $2$ & $2$ & $1$ & $t_{e_1}J_2$ & $p|h|$ & $4$ \tabsevenlabel{line9T7}\\
 10 & $3$ & $2$ & $4$ & $t_{e_1}$ & $p|h|$ & $2,2,2,2$ \tabsevenlabel{line10T7} \\
 11 & $3$ & $2$ & $2$ & $t_{e_1}(J_2\pl J_1)$ & $p|h|$ & $4,4$ \tabsevenlabel{line11T7}\\
 12 & $3$ & $3$ & $3$ & $t_{e_1}J_3$ & $p|h|$ & $9,9,9$ \tabsevenlabel{line12T7}\\
 13 & $4$ & $2$ & $4$ & $t_{e_1}(J_2\pl J_2)$ or $t_{e_1}(J_2\pl J_1\pl J_1)$ & $p|h|$ & $4,4,4,4$ \tabsevenlabel{line13T7}\\
 14 & $4$ & $2$ & $2$ & $t_{e_1}J_4$ & $p|h|$ & $8,8$ \tabsevenlabel{line14T7}\\
 15 & $5$ & $2$ & $4$ & $t_{e_1}(J_4\pl J_1)$ & $p|h|$ & $8, 8, 8, 8$ \tabsevenlabel{line15T7}\\
 \hline
\end{tabular}
\end{center}
\caption{At most $4$-cycles, sporadic cases}\label{tab:new3}
\end{table}

\begin{proof}[Proof of Theorem~$\ref{thm:element4cycles}$]
Let $g=t_vh\in \AGL_d(p)$ with at most four cycles in its action on $V$. Such an element $g$ must appear in Table~\ref{tab:neworders1},~\ref{tab:neworders2}, or~\ref{tab:neworders3}. We consider each possibility on a line-by-line basis. As before we use Notation~\ref{not}. Firstly, we suppose that $|g|=|h|$.

 If $g$ is as in line~\ref{l1T2} of Table~\ref{tab:neworders1}, 
 then we may assume that $g=h$ and we have the examples in line~\ref{line1T5} of Table~\ref{tab:new1}. 
 Similarly line~\ref{line1T4} of Table~\ref{tab:neworders3} 
 gives rise to line~\ref{line2T5} of Table~\ref{tab:new1} and line~\ref{line3T7} of Table~\ref{tab:new3}. 
 In line~\ref{line1T3} of Table~\ref{tab:neworders2} 
 we have $U= \langle e_1 \rangle$ and we may assume that $v \in U$. But if $v\ne 0$ then $|g|=p|h|$; so we may assume $g=h$ (recall that $p=3$ for this line). But then $g$ has $3$ cycles on $U$ and therefore has more than $4$ cycles in total by Claim~1.
 In line~\ref{line2T3} of Table~\ref{tab:neworders2} 
 $g=h=s_{d_1}\pl s_{d-d_1}$, but Claim~3 implies that $p=2$, whereas we have $p=3$. Suppose that $g$ is as in line~\ref{line3T3} of Table~\ref{tab:neworders2}. 
 Then by Lemma~\ref{sub-restrict}, $g=h=(s_1\otimes J_2)\oplus s_{d-2}^i$, since $h$ is fixed point free on $V$, and $V$ has a $g$-module decomposition $W_1\oplus W_2$, where $\dim W_1=2$ and $g$ has $3$ cycles on $W_1$; but there must also be at least one cycle on $W_2 \setminus \{ 0 \}$ and on $V \setminus (W_1 \cup W_2)$; so these elements do not provide examples. 
Next, for $g$ as in line~\ref{line6T3} of Table~\ref{tab:neworders2}, conjugating by a suitable $t_{v'}$, we may assume that $v\in\la e_1\ra$. So $g=t_{ae_1}(J_2\oplus s_{d-2})$ has cycle lengths on $U$ equal to $1,1,1,3,3$ (if $a=0$), or $3,3,3$ (if $a=\pm 1$), contradicting Claim 1.
In line~\ref{line2T4} of Table~\ref{tab:neworders3}, 
again by Lemma~\ref{sub-restrict}, we have $g=h$ and direct calculation shows that the cycle lengths are $24,24,24,8,1$. In line~\ref{line8T3} of Table~\ref{tab:neworders2}, 
 again we may assume $v=0$ and Claim~3 shows that $t=2$ and we have the examples in line~\ref{line2T6} of Table~\ref{tab:new2}. 
 In line~\ref{line9T3} of Table~\ref{tab:neworders2}, 
 $U=0$ so, by Lemma~\ref{sub-restrict}, $g=h$ and Claim~3 yields the examples in line~\ref{line1T6} of Table~\ref{tab:new2}. (Observe that Claim~3 immediately gives that the elements in lines~\ref{line14T3} and~\ref{line14primeT3} of Table~\ref{tab:neworders2} give rise to no examples.) In line~\ref{line10T3} of Table~\ref{tab:neworders2}, 
 we have $g=h$ by Lemma~\ref{sub-restrict}; in this case $s_2\otimes J_2$ has cycle lengths $1,3,6,6$ on a $4$-dimensional subspace $W_1$ and so the only examples occur when $d=4$; see line~\ref{line2T7} of Table~\ref{tab:new3}. 
 In line~\ref{line11T3} of Table~\ref{tab:neworders2}, 
 by conjugating by a suitable $t_{v'}$ we may assume that $v=0$ or $v=e_1$. Direct calculation shows that (within $U$) these two cases give cycle lengths $1,1,2,4$ and $4,4$ respectively. Clearly the first case cannot occur (since $d\ge 5$). In the second case, Claim~1 implies $g=t_{e_1}(J_3\pl s_{d-3})$ as in line~\ref{line3T6} of Table~\ref{tab:new2}.
 Similarly in line~\ref{line12T3} of Table~\ref{tab:neworders2}, 
 $v=0$ or $v=e_1$, but in the latter case we have $|g|=2|h|$. Thus $v=0$, but then $g$ has cycle lengths $1,1,2$ on $U$ contradicting Claim~1. 
In line~\ref{line4T4} of Table~\ref{tab:neworders3} 
we have $h=J_d$, and conjugating by a suitable $t_{v'}$, we may assume that $v\in\la e_1\ra$. Recalling that 
we have $|g|=|h|$, we may assume $g=J_2$ (line~\ref{line2T5} of Table~\ref{tab:new1}), 
 $J_3$ (line~\ref{line1T7} of Table~\ref{tab:new3}), 
 $t_{e_1}J_3$ (line~\ref{line5T7} of Table~\ref{tab:new3}), 
 $J_4$ (cycle lengths $1,1,2,4,4,4$), $J_5$ (cycle lengths 1,1,2,4,4,4,8,8) or $t_{e_1}J_5$ (line~\ref{line4T7} of Table~\ref{tab:new3}). 
 In line~\ref{line9T4} of Table~\ref{tab:neworders3}, 
 $h=J_2\pl J_1$ and conjugating by a suitable $t_{v'}$ we may assume that $v \in \langle e_1,e_3 \rangle$ and there are four possibilities for $v$. A computation shows that only the choices $v=0$ and $v=e_3$ give $|g|=|h|$; now another direct calculation shows that we only have an example when $v=e_3$; see line~\ref{line7T7} of Table~\ref{tab:new3}. 
 In line~\ref{line11T4} of Table ~\ref{tab:neworders3}, Claim~1 implies $g=t_{e_1}(J_3\pl J_1)$ as in line~\ref{line6T7} of Table~\ref{tab:new3}.
 This completes the analysis of the case $|g|=|h|$.
%
%

Henceforth, we shall assume that $|g|=p|h|$.
 First suppose that $g$ is as in line~\ref{l2T2} of Table~\ref{tab:neworders1}. 
 Then either $d=2$ and $p=2,3$, which gives the examples in line~\ref{line8T7} of Table~\ref{tab:new3}; or $(p,d,i)=(2,3,3)$, as in line~\ref{line10T7} of Table~\ref{tab:new3} (these are the possibilities that have $s_{d-1}^i=1$); 
 or else we have the examples in line~\ref{line4T5} of Table~\ref{tab:new1} (when $s_{d-1}^i\neq 1$). 
If $g$ is as in line 3 of Table~\ref{tab:neworders1} then $g=t_{e_1}$ as in line 3 of Table~\ref{tab:new1}.
 If $g$ is in line~\ref{line4T3} of Table~\ref{tab:neworders2} 
 then $p=3$ and either $g=t_{e_1}(J_1\pl J_1\pl s_{d-2})$, and $g$ has three cycles on $U$ contradicting Claim~1, or $g = t_{e_1}(J_1\pl s_{d_1} \pl s_{d_2})$ with $(d_1,d_2)=1$, and these examples do not occur by Claim~3 since $p=3$. Next, if $g$ is in line~\ref{line5T3} of Table~\ref{tab:neworders2}, 
 then a direct calculation shows that $g$ has cycle lengths $9,9,9$ on $U$ and so there are no examples by Claim~1. Next, suppose that $g$ is as in lines~\ref{line15T3},~\ref{line16T3} of Table~\ref{tab:neworders2}. Using the notation in Table~\ref{tab:neworders2}, we have either $d_1=1$ or $d_1\ge 2$. In the former case, $g=t_{e_1}(J_1\pl J_1\pl h'')$ or $g=t_{e_1}(J_2\pl J_1\pl h'')$ and Claim~1 implies that $h''$ is a Singer cycle; see lines~\ref{line4T6},~\ref{line6T6} of Table~\ref{tab:new2}. 
 In the latter case, we apply Claim~3 to deduce that $g$ must be as in line~\ref{line7T6} of Table~\ref{tab:new2}. 
 Now suppose that $g$ is as in line~\ref{line17T3} of Table~\ref{tab:neworders2} 
 so $g=t_{e_1}(J_4\pl h')$; if $h'= s_{d-4}^i$ then we have the examples in line~\ref{line15T7} of Table~\ref{tab:new3} (when $h'=1$) and line~\ref{line8T6} of Table~\ref{tab:new2} (when $h'\neq 1$. Here, observe that $i=1$ by Claim~1, and $h'$ cannot be as in line~\ref{line9T3} of Table~\ref{tab:neworders2} by Claim~3). If $g$ is as in lines~\ref{line6T4},~\ref{line7T4} of Table~\ref{tab:neworders3} 
 then $vh(2) \ne 0$, hence $v$ generates a cyclic $h$-submodule of order $2^2$ and we may assume that $v=e_1$. A direct calculation gives us the examples in line~\ref{line13T7} of Table~\ref{tab:new3}. 
 Direct calculation shows that lines~\ref{line5T4},~\ref{line3T4},~\ref{line10T4} of Table~\ref{tab:neworders3} 
 give rise to the examples in lines~\ref{line14T7},~\ref{line9T7},~\ref{line12T7},~\ref{line10T7} of Table~\ref{tab:new3} respectively. 
 Similarly line~\ref{line18T3} of Table~\ref{tab:neworders2} and line~\ref{line8T4} of Table~\ref{tab:neworders3} 
 give the examples in line~\ref{line5T6} of Table~\ref{tab:new2} and line~\ref{line11T7} of Table~\ref{tab:new3} respectively. 
\end{proof}

 \section{Maximal subgroups of $\GL_d(p)$ containing elements of large order}

Let $g=t_vh\in\AGL(V)$ have order at least $|V|/4=p^d/4$, so $g$ is as in one of the lines of Tables~$\ref{tab:neworders1}$,~$\ref{tab:neworders2}$, or~$\ref{tab:neworders3}$. In this section we determine which kinds of primitive subgroups of $\AGL(V)$ contain at least one such element. Each primitive subgroup of $\AGL(V)$ is a semidirect product $G=TH$ where $T$ is the group of translations of $V$ and $H\leq\GL(V)$ is irreducible on $V$. It is convenient to use Aschbacher's description in~\cite{Asch} of the maximal subgroups $H$ of $\GL(V)$ not containing $\SL(V)$ (as exploited, for example in~\cite{BamPen,GPPS}). Thus we consider this problem class by class, for maximal subgroups in the various Aschbacher classes $\C_2, \dots, \C_9$. We discover that subgroups in many Aschbacher classes seldom contain elements of sufficiently large order.
 First we consider Aschbacher class $\C_2$: here the subgroups are stabilizers $\GL_{d/r}(p)\wr\Sym(r)$ of decompositions $V=\oplus_{i=1}^rV_i$, for some divisor $r$ of $d$ with $r>1$.

\begin{lemma} \label{lem:onlyC2s}
Let $ d \ge 3$ and let $r$ be a divisor of $d$ with $r>1$. Let $G=TH$ be a subgroup of $\AGL_d(p)$ with $H$ in the Aschbacher class $\C_2$ of type $\GL_{d/r}(p) \wr \Sym(r)$, and suppose that $G$ contains an element $g=t_vh$ with $|g|\geq p^d/4$. Then $p\in \{2,3\}$. 

If $p=3$, then either $r=2<d$, or $d=r= 3$. Moreover, the image of $h$ in $H/ \GL_{d/r}(p)^r \cong \Sym(r)$ is non-trivial only when $d=r= 3$.

If $p=2$, then either $d/r\geq 3$, or $d=r\leq 5$, or $d=2r\leq 6$. For $d/r\geq 3$, the image of $h$ in $\Sym(r)$ is trivial, and
moreover, $4r^2-21r\leq d$.
 \end{lemma}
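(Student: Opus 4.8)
The plan is to feed the explicit list of candidate elements from Theorem~\ref{thm:orders} into the representation-theoretic constraints forced by membership in a $\C_2$-subgroup. Write $m=d/r$, let $V=\bigoplus_{i=1}^{r}V_i$ be the preserved decomposition with $\dim V_i=m$, and let $\sigma\in\Sym(r)$ be the image of $h$. At the outset I would record two structural facts. First, for each $\sigma$-orbit $\mathcal O$ of length $c$ the subspace $V_{\mathcal O}=\bigoplus_{i\in\mathcal O}V_i$ is $\langle h\rangle$-invariant, $h^{c}$ stabilises each $V_i$ in $\mathcal O$ acting there by mutually conjugate elements of $\GL_m(p)$, and hence (by the induced-module structure) every irreducible $\mathbb F_p\langle h\rangle$-constituent of $V_{\mathcal O}$ has dimension $c\cdot e$ with $1\le e\le m$. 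Second, since $\sigma^{|\sigma|}=1$, the power $h^{|\sigma|}$ lies in the base group $\GL_m(p)^{r}$, so each of its block components has order at most $\meo(\GL_m(p))=p^m-1$. Because $|g|\ge p^d/4$ forces $|h|\ge p^{d-1}/4$, Theorem~\ref{thm:orders} restricts $h$, up to conjugacy, to the few shapes listed in Tables~\ref{tab:neworders1}--\ref{tab:neworders3}, and I would test each against these two facts.

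For $p\ge5$ and $d\ge3$ the only surviving shapes are $h=s_d^{\,i}$ and $h=J_1\pl s_{d-1}^{\,i}$ with $1\le i\le3$ (Table~\ref{tab:neworders1}). For $h=s_d^{\,i}$ I would apply the second fact: $h^{|\sigma|}=s_d^{\,i|\sigma|}$ is semisimple with Galois-conjugate eigenvalues, all of a common order $N=(p^d-1)/\gcd(p^d-1,i|\sigma|)$, so every block component has order exactly $N$, whence $N\le p^m-1$. As $i\le3$ and $|\sigma|$ is controlled by $r$, this yields $p^d-1\le 3|\sigma|\,p^{d/r}$, which a short computation rules out for $p\ge5$, $d\ge3$, $r\ge2$. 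For $h=J_1\pl s_{d-1}^{\,i}$ the irreducible constituent $s_{d-1}^{\,i}$ has dimension $d-1$, so the first fact forces $d-1=ce$ with $c\le r$ and $e\le m=d/r$; since $r\mid d$ a short divisibility argument shows this is only possible when $r=d$ and $m=1$, i.e.\ in the fully monomial group $\GL_1(p)\wr\Sym(d)$. That residual monomial case I would settle by directly computing orders in $\GL_1(p)\wr\Sym(d)$ and matching against the $p^d/4$ threshold and the tables.

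For $p\in\{2,3\}$ I would run the analogous line-by-line analysis, and this is where the stated trichotomy emerges. When $\sigma$ is nontrivial, the first fact pins the dimension of each Singer or Jordan constituent to a multiple of the relevant cycle length; intersecting this with the pairwise-coprimality conditions in Tables~\ref{tab:neworders2} and~\ref{tab:neworders3} collapses the possibilities to the small cases $d=r\le5$ or $d=2r\le6$ (for $p=2$) and $d=r=3$ (for $p=3$), and in particular shows $\sigma$ can be nontrivial only there. When $\sigma$ is trivial, $h$ is block-diagonal, so each irreducible constituent must lie inside a single $V_i$ and hence have dimension at most $m$; reformulating ``$h$ has a block-diagonal conjugate whose pairwise-coprime Singer blocks all have dimension $\le m$ and whose order is at least about $p^{d-1}/4$'' as a packing condition on the constituent dimensions should produce $m\ge3$ together with $4r^2-21r\le d$ in the case $p=2$, and $r=2$ in the case $p=3$.

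The main obstacle I anticipate is precisely this last step for $p=2$ with $m\ge3$: one must prove both that $\sigma$ is forced to be trivial and that packing enough pairwise-coprime dimensions $d_j$ (each at most $m$) into $r$ blocks of size $m$, while keeping $\prod_j(2^{d_j}-1)$ of the order of $2^{d}$, is feasible exactly in the range $4r^2-21r\le d$; this is a delicate counting estimate rather than a single inequality and is the technical heart of the proof. A secondary but genuinely fiddly point is the monomial regime $m=1$ (and the low-dimensional sporadic entries), where the $\C_2$-containment overlaps other Aschbacher classes and the precise $(d,r,p)$ thresholds must be extracted by direct order computations and careful cross-referencing with Tables~\ref{tab:neworders1}--\ref{tab:neworders3}.
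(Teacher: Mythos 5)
Your overall strategy coincides with the paper's: both feed the classification of Theorem~\ref{thm:orders} into the constraints imposed by writing $h=(h_1,\dots,h_r)\sigma$ relative to the preserved decomposition, and both split on $p$ and on whether $\sigma$ is trivial. However, the proposal leaves the two genuinely hard steps unproved, and for one of them the mechanism you name would not suffice on its own. To force $\sigma=1$ when $p=2$ and $m=d/r\ge 3$, dimension divisibility plus coprimality of the Singer dimensions is not enough: if $\sigma$ has a $c$-cycle and $h_1h_2\cdots h_c$ acts irreducibly on $V_1$, then $h$ restricted to the corresponding orbit has a \emph{single} irreducible constituent of dimension $cm$, which is dimensionally indistinguishable from a summand $s_{cm}$ occurring in the tables. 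What eliminates this configuration is the order bound $|h^c|\le\meo(\GL_m(2))=2^m-1$, hence $|h|\le c(2^m-1)$, and the paper runs exactly this estimate over all cycle types of $\sigma$ (a full $r$-cycle, shorter cycles, two or more transpositions, a single transposition), then uses the exponent $84$ of $\GL_3(2)$ and explicit computations to kill the residual cases $(m,r)=(3,3)$ and $(3,2)$. Your ``second fact'' contains the right seed, but you deploy it only for $p\ge5$ and explicitly defer it here.

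The second gap is the inequality $4r^2-21r\le d$, which you flag as an anticipated obstacle rather than establish. The paper's argument is a concrete pigeonhole on the blocks $h_1,\dots,h_r\in\GL_m(2)$ using $|h|=\lcm\{|h_i|\}\ge 2^{d-3}$: at most two blocks may have order $\le 2^{m-1}$; among the remaining ones, at most one can be a full Singer cycle $s_m$ (two would share the factor $2^m-1$); the rest decompose as direct sums of at least two Singer cycles of pairwise coprime dimensions, of which at most two blocks may involve an even dimension and at most one more a dimension equal to $3$; the surviving $r-6$ blocks therefore contribute at least $2(r-6)$ pairwise coprime odd integers greater than $3$ inside $\{1,\dots,m\}$, whence $2(r-6)\le(m-3)/2$, which rearranges to the stated bound. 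Without this count (or an equivalent one) the quantitative conclusion of the lemma is not proved. A smaller omission of the same kind: for $p=3$, $r=2$, the triviality of the image of $h$ in $\Sym(2)$ requires the estimate $|h|\le 2\meo(\GL_{d/2}(3))\le 2(3^{d/2}-1)$ against $|h|\ge 3^{d-1}/4$, plus a machine check at $d=4$, none of which appears in the proposal.
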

\begin{proof}
Since $|g|\geq p^d/4$, by Theorem~\ref{thm:orders}, the element $h$ is as in Tables~$\ref{tab:neworders1}$,~$\ref{tab:neworders2}$, 
or~$\ref{tab:neworders3}$. Moreover, $|h|\geq |g|/|t_v|\geq p^{d-1}/4$. In the proof of this lemma we repeatedly use both of these observations on $h$.

By an inspection of Table~\ref{tab:neworders1} it is clear that $h=s_d^i$ and $h=J_1\pl s_{d-1}^i$ (with $1\leq i\leq 3$) are not contained in a $\C_2$ subgroup when $d\geq 3$. Therefore, since $d\geq3$, $h$ is as in one of the lines of Table~$\ref{tab:neworders2}$, or~$\ref{tab:neworders3}$, and in particular, $p\in \{2,3\}$. 

\smallskip 

Assume that $p=3$. Observe that, for every even $d$, $h=J_1\pl s_{d_1}\pl s_{d_2}$ with $d_2=d_1+1=d/2$ (as in line~\ref{line4T3} of Table~\ref{tab:neworders2}) lies in $\GL_{d/2}(3)\wr \Sym(2)$. Now assume that $r\geq 3$: we show that only $r=d=3$ is possible. For $d\geq 8$, the descriptions of $h$ in Tables~\ref{tab:neworders2} or~\ref{tab:neworders3} and a case-by-case analysis immediately eliminates $\C_2$-subgroups not of type $\GL_{d/2}(3)\wr \Sym(2)$. For $3 \le d\le 7$, there are maximal $\C_2$-subgroups only when $r=d$, or when $(d,r)=(6,3)$. A direct calculation eliminates the possibility $\GL_2(3) \wr \Sym(3)$ (the maximal element order of $\GL_2(3)\wr \Sym(3)$ is $48$, and $|h|\geq 3^{6-1}/4> 48$, a contradiction). Thus $3\leq r=d\leq7$. Another direct calculation shows that $T\cdot (\GL_1(3)\wr\Sym(d))$ contains elements $g=t_vh$ of order at least $3^{d}/4$ only when $d= 3$.

It remains to show that the image of $h$ in $H/\GL_{d/2}(3)^2$ is trivial when $r=2$. Suppose that $h=(h_1,h_2)(1 2)$, with $h_1,h_2\in \GL_{d/2}(3)$. Observe that $h^2=(h_1h_2,h_2h_1)$ and that $(h_1h_2)^{h_1}=h_2h_1$. Therefore $|h|=2|h_1h_2|\leq 2\meo(\GL_{d/2}(3))\leq 2(3^{d/2}-1)$. Since $|h|$ has order at least $3^{d-1}/4$, we get $3^{d-1}/4\leq 2(3^{d/2}-1)$, which has a solution only for $d=4$. Finally a computation in $T\cdot (\GL_2(3)\wr \Sym(2))$ shows that there is no element $t_vh$ of order $\geq 3^4/4$ with $h$ having non-trivial image in $\Sym(2)$.

\smallskip

Assume that $p=2$. Write $m=d/r$. We consider separately the cases $m=1$ and $m=2$. 
From~\cite[Theorem~$2$]{Mass}, we see that \[\log(\meo(\Sym(x)))\leq \sqrt{x\log(x)}\left(1+\frac{\log(\log(x))-0.975}{2\log(x)}\right)\] for $x\geq 3$, where $\log$ indicates the natural logarithm. For simplicity denote by $f(x)$ the exponential of the function on the right hand side of this inequality. For $m=1$, we have $H=\Sym(d)$. Now $|h|\geq 2^{d-3}$ and hence 
$2^{d-3}\leq f(d)$. A computation shows that this inequality is satisfied only when $d\leq 9$. Now for these small values of $d$, by computing the exact value of $\meo(\Sym(d))$ we see with another computation that $d\leq 5$.

Now we consider $m=2$, that is, $H=\GL_2(2)\wr\Sym(d/2)$. As $|\GL_2(2)|=6$, we get $2^{d-3}\leq |h|\leq \meo(\GL_2(2)\wr\Sym(d/2))\leq 6f(d/2)$ and a computation shows that this happens only for $d\leq 8$. Now for these small values of $d$ we see with another explicit computation that $d\leq 6$.

For the rest of the proof we assume that $m\geq 3$. We start by showing that $h \in H$ has trivial image in $H/ \GL_m(2)^r$. We write $h= (h_1,h_2, \ldots, h_r) \sigma$ where $h_i \in \GL_m(2)$ and $\sigma \in \Sym(r)$. We argue by contradiction and we assume that $\sigma\neq 1$. Suppose that $\sigma$ has a cycle of length $\ell$. If $\ell =r$ then without loss of generality, we may assume that $\sigma = (1 2 \cdots r)$. Now an easy computation shows that $(12 \cdots r)(h_1, h_2, \ldots, h_r) = (h_2, h_3,\ldots, h_r,h_1 ) (12 \cdots r)$. It follows that
 \begin{align*}
 h^r &= (h_1,h_2, \ldots, h_r) \sigma (h_1,h_2, \ldots, h_r) \sigma \cdots(h_1,h_2, \ldots, h_r) \sigma \\
 &= (h_1h_2\cdots h_r, \,h_2h_3 \cdots h_r h_1,\, \ldots,\, h_r h_1\cdots h_{r-1}).
 \end{align*}
 But
 \[ h_1 h_2 \cdots h_r = h_1 (h_2 \cdots h_r h_1) h_1^{-1} \]
 and similarly we see that all of the entries of $h^r$ above are conjugate. In particular, they have the same order and since $p=2$ we have $|h| \le r\meo(\GL_m(2))=r (2^{m}-1) < 2^{m+r-1}$. If $\ell=r\geq3$ then, since $m\geq3$, this is less than $2^{d-3}$. So the only possibility not eliminated yet in this case is $\ell=r=2$, and hence $\sigma$ is a transposition.

Next suppose that $ \ell < r$. Then $h\in (\GL_m(2) \wr \Sym(\ell))\times (\GL_m(2) \wr \Sym(r -\ell)),$ which is isomorphic to a subgroup of $(\GL_m(2)\wr \Sym(\ell)) \times \GL_{d-m\ell}(2)$. Using $\ell\leq 2^{\ell-1}$, the same calculation as above shows that $|h| \le \ell (2^{m}-1)(2^{rm-m\ell}-1) < 2^{rm +m +\ell -m \ell-1}$, and this is at most $2^{d-1}/4=2^{mr-3}$ when $m, \ell \ge 3$. Therefore all of the cycles of $\sigma$ must have length at most $2$. If $\sigma$ has at least two $2$-cycles then $h$ can be embedded in $(\GL_m(2)\wr \Sym(2)) \times (\GL_m(2)\wr \Sym(2)) \times \GL_{d-4m}(2)$ and the same argument shows that $|h| < 2^{d-3}$. It follows that $\sigma$ is a transposition.

When $\sigma$ is a transposition, up to reordering we may assume that $\sigma=(1 2)$. Now $h=(h_1,\ldots,h_r)(1 2)$ and $h^2=(h_1h_2,h_2h_1,h_3,\ldots,h_r)$. Since $h_1h_2$ and $h_2h_1$ are conjugate, we get
$|h|\leq 2(\meo(\GL_{m}(2)))^{r-1}=2(2^m-1)^{r-1}<2^{d-m+1}$. As $|h|\geq 2^{d-3}$, we obtain $d-3< d-m+1$, which gives $m< 4$. Thus $m=3$. With this information we can now refine our computations. In fact, for $m=3$, the group $\GL_3(2)$ has exponent $84$ and hence $\GL_3(2)^{r}$ also has exponent $84$. Thus $|h|\leq 2\cdot 84=168$. As $|h|\geq 2^{d-3}=2^{3r-3}$, we have $168\geq 2^{3r-3}$, which is satisfied only for $r\leq 3$. For $r=3$, it can be easily checked with a computer that the elements of $(\GL_3(2) \wr \Sym(2)) \times \GL_3(2)$ have order at most $56$. As $56<64=2^{d-3}$, this case does not arise. For $r=2$, it is a computation to verify that the maximal order of an element $g=t_vh$ of the affine group $T\cdot (\GL_3(2)\wr\Sym(2))$, with $h=(h_1,h_2)(1 2)$, is $14$. As $14<16=2^{6-2}$, the case $r=2$ does not arise either.

It remains to prove that $4r^2-21r\leq d$. From the previous paragraphs, we have $h=h_1\pl \cdots \pl h_r$, with $h_1,\ldots,h_r\in \GL_{m}(2)$. Recall that $|h|=\lcm\{|h_i|\mid i\in \{1,\ldots,r\}\}\geq 2^{d-3}$. If $|h_i|,|h_j|,|h_k|\leq 2^{m-1}$ for some distinct indices $i$, $j$ and $k$, then $|h|\leq 2^{3(m-1)}(2^{m}-1)^{r-3}<2^{d-3}$, a contradiction. This shows at most two entries of $h$ have order $\leq 2^{m-1}$. Up to reordering we may assume that $|h_j|>2^{m-1}$, for every $j\geq 3$, and an inspection of Tables~$\ref{tab:neworders1}$,~$\ref{tab:neworders2}$, or~$\ref{tab:neworders3}$ reveals that $h_j$ is as in line~\ref{l1T2} of Table~\ref{tab:neworders1} with $i=1$, or as in line~\ref{line9T3} of Table~\ref{tab:neworders2}, for each $j\geq 3$. If $h_j=h_k=s_{m}$ for some distinct indices $j$ and $k$, then $\lcm(|h_j|,|h_k|)= 2^m-1$ and hence $|h|\leq (2^{m}-1)^{r-1}<2^{d-m}\leq 2^{d-3}$. This shows that there exists at most one index with $h_j$ as in line~\ref{l1T2} of Table~\ref{tab:neworders1}. Therefore, up to reordering, we may assume that $h_j$ is as in line~\ref{line9T3} of Table~\ref{tab:neworders2} for each $j\geq 4$. 

For $i\in \{4,\ldots,r\}$ write $h_i=s_{d_{i,1}}\pl \cdots \pl s_{d_{i,t_i}}$, with $d_{i,1},\ldots,d_{i,t_i}\geq 2$ pairwise coprime and $t_i\geq 2$. Suppose that $d_{i,j},d_{i',j'},d_{i'',j''}$ are even, for some $i,j,i',j',i'',j''$ with $i$, $i'$ and $i''$ pairwise distinct. Then $\gcd(|h_i|,|h_{i'}|,|h_{i''}|)\geq 3$ and hence, arguing as above, $|h|\leq (2^m-1)^r/3^2<2^{d-3}$. So, up to reordering, we may assume that $d_{i,j}$ is odd for every $i\geq 6$ and for every $1\leq j\leq t_i$. 

Repeating the argument in the previous paragraph, we see that (up to the usual reordering) $d_{i,j}\neq 3$ for every $i\geq 7$ and for every $1\leq j\leq t_i$. Now, if $d_{i,j}=d_{i',j'}$ for some distinct $i$ and $i'$ with $i,i'\geq 7$, then we have $\gcd(|h_i|,|h_j|)\geq 2^5-1=31$ and a computation shows that $|h|\leq (2^{m}-1)^r/31<2^{d-3}$, which is a contradiction. Therefore the numbers $d_{i,j}$, with $7\leq i\leq r$ and $1\leq j\leq t_i$, are pairwise coprime, odd and not equal to $3$. Since $t_i\geq 2$, we have at least $2(r-6)$ such integers in $\{1,\ldots,m\}$. Since the number of odd numbers greater than $3$ in $\{1,\ldots,m\}$ is $\leq (m-3)/2$, we get $2(r-6)\leq (m-3)/2$, which gives the desired result.
\end{proof}

\begin{remark}\label{rem2}{\rm The lower bound on $d$ (as as function of $r$) when $p=2$ given in Lemma~\ref{lem:onlyC2s} can be improved, as follows:
\begin{equation*}
2(r-5)\leq \frac{d/r}{\log(d/r)}\left(1+\frac{3}{2\log(d/r)}\right).
\end{equation*} 
This is essentially a consequence of the last paragraph of the proof of Lemma~\ref{lem:onlyC2s}, which shows that in the interval $\{1,\ldots,d/r\}$ there are at least $2(r-6)$ distinct pairwise coprime numbers greater than 3, odd, and coprime to 3. Therefore, there must be at least $2(r-5)$ distinct primes in $\{1,\ldots,d/r\}$, and so $2(r-5)\leq \pi(d/r)$, where as usual $\pi(x)$ is the function counting the number of primes $\leq x$. Now $\pi(x)\leq x/\log(x)(1+3/(2\log(x)))$ by~\cite[Theorem~$1$]{Schon}.

In fact, we will now show (assuming the truth of the extended Goldbach conjecture, explained below,) that this improved bound is close to having the correct order of magnitude. Let $\pi_2(n)$ represent the number of ordered pairs of primes $(p,q)$ with $p<q$ and $n=p+q$. Suppose that $d'$ is large and even and $r=\pi_2(d')$, then if $d=d'r$ there is a $g \in \GL_{d/r}(2) \wr \Sym(r)$ with $|g|\ge 2^d/4=2^{d-2}$. By the definition of $\pi_2$, there exist $(p_1, q_1), \ldots (p_r, q_r)$ pairs of primes with $p_i<q_i$ and $p_i+q_i=d'=d/r$. Clearly, the primes $\{p_i, q_i \mid 1 \le i \le r\}$ are all distinct, and hence the numbers in $\{2^{p_i}-1,2^{q_i}-1 \mid 1 \le i \le r\}$ are pairwise coprime (since $(2^a-1,2^b-1)=2^{(a,b)}-1$). Let $g=(h_1, \ldots, h_r)$, where $h_i=s_{p_i} \oplus s_{q_i}$. Then 
\[|g|=\prod_{i=1}^r(2^{p_i}-1)(2^{q_i}-1)=2^d\prod_{i=1}^r\left(1-\frac{1}{2^{p_i}}\right)\left(1-\frac{1}{2^{q_i}}\right).\]
Observe that $p_i>2$ since $d'$ is even. So this gives $|g|>2^d \varepsilon^2$, where 
\[\varepsilon=\prod_{i=3}^\infty \left( 1-\frac{1}{2^i}\right).\]
It is not hard to compute that $\varepsilon^2 \sim 0.59$ so that $|g| >2^{d-2}$.

Now, the extended Goldbach conjecture claims that for $n$ large and even, there is a constant $C$ (given in the conjecture) such that
\[\pi_2(n) \ge C\frac{n}{(\log(n))^2}.\]
When $r=\pi_2(d')=\pi_2(d/r)$, this gives \[r \ge C \frac{d/r}{(\log(d/r))^2}.\]
This shows, as claimed, that (assuming the extended Goldbach conjecture) there exist $d$ and $r$ for which some $g \in \GL_{d/r}(2)\wr\Sym(r)$ has $|g|>2^d/4$ and $d$ and $r$ come close (asymptotically) to meeting the improved bound given in the first paragraph of this remark.
%
}
\end{remark}

 \begin{lemma} \label{lem:onlyC4s}
 If $H$ is a maximal subgroup of $\GL_d(p)$ of type $\C_4$ containing an element $h$ of order at least $p^{d-1}/4$, then $d=6$, $p\in \{2,3\}$ and $H = \GL_2(p) \otimes \GL_3(p)$. Moreover $\GL_2(p) \otimes \GL_3(p)$ contains elements of order at least $p^{6}/4$ if and only if $p=2$.
 \end{lemma}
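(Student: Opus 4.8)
\emph{Setup and basic bound.} A maximal $\C_4$-subgroup of $\GL_d(p)$ is the stabiliser $H=\GL_{d_1}(p)\otimes \GL_{d_2}(p)$ (a central product) of a tensor decomposition $V=V_1\otimes V_2$ with $\dim V_i=d_i$, where $2\le d_1<d_2$ and $d=d_1d_2$. Every $h\in H$ acts as $h_1\otimes h_2$ for some $h_i\in\GL_{d_i}(p)$, and since $(h_1\otimes h_2)^k=h_1^k\otimes h_2^k$ is the identity exactly when $h_1^k,h_2^k$ are mutually inverse scalars, $|h|$ divides $\lcm(|h_1|,|h_2|)$. Using $\meo(\GL_{d_i}(p))=p^{d_i}-1$ (recorded at the start of the proof of Theorem~\ref{thm:orders}) I obtain
\[
|h|\le \lcm(|h_1|,|h_2|)\le (p^{d_1}-1)(p^{d_2}-1)<p^{d_1+d_2}.
\]

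\emph{Reduction to a short list.} Imposing $|h|\ge p^{d-1}/4=p^{d_1d_2-1}/4$ and combining with the displayed bound gives $p^{(d_1-1)(d_2-1)-2}<4$. As $2\le d_1<d_2$, this forces $d_1=2$ and then $p^{d_2-3}<4$; hence only $(d_1,d_2)=(2,3)$ survives for every $p$, together with $(d_1,d_2)=(2,4)$ when $p\in\{2,3\}$. Thus $d\in\{6,8\}$, and $d=8$ only if $p\in\{2,3\}$.

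\emph{Borderline cases.} The product bound cannot finish the job, since for $(2,3)$ both $|h|$ and $p^{d-1}/4$ are of size roughly $p^5$. The key refinement is that $\gcd(p^{d_1}-1,p^{d_2}-1)=p^{\gcd(d_1,d_2)}-1$, so for semisimple $h_1,h_2$ the largest order is $\lcm(p^{d_1}-1,p^{d_2}-1)$; moreover any nontrivial unipotent part of some $h_i$ replaces its Singer order $p^{d_i}-1$ by one divisible by at most $p^{\lceil\log_p d_i\rceil}$ (Lemma~\ref{uni}), a net loss. Hence $\meo(H)$ is realised by a tensor product of (powers of) Singer cycles, and a short check settles each case:
\begin{itemize}
\item for $(2,3)$ and all $p$ one gets $\meo(H)=\lcm(p^2-1,p^3-1)=(p+1)(p^3-1)$, which is $\ge p^5/4$ exactly when $p\in\{2,3\}$ (for $p\ge5$ one checks $(p+1)(p^3-1)<p^5/4$);
\item for $(2,4)$ with $p=2$ one has $H\le \Sym(3)\otimes(\GL_4(2)\cong\Alt(8))$, so $|h|\le 30<32=p^{d-1}/4$;
\item for $(2,4)$ with $p=3$, using $\meo(\GL_4(3))=80$ and that the maximal odd order there is $13$, a direct check gives $|h|\le 240<3^7/4$.
\end{itemize}
Only $(d_1,d_2)=(2,3)$ with $p\in\{2,3\}$ remains, giving $d=6$ and $H=\GL_2(p)\otimes\GL_3(p)$, which is the first assertion.

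\emph{The ``moreover'' and the main obstacle.} Here I compare $\meo(\GL_2(p)\otimes\GL_3(p))=(p+1)(p^3-1)$ with $p^6/4$. For $p=2$ the element $s_2\otimes s_3$ has order $\lcm(3,7)=21\ge 16=2^6/4$, so an element of the required order exists; for $p=3$ the maximum is $104<182.25=3^6/4$, and for $p\ge5$ one has $(p^2-1)(p^3-1)<p^5<p^6/4$, so no such element exists when $p\ge3$. The crux throughout is the family $(2,3)$: the crude estimate $|h|\le(p^{d_1}-1)(p^{d_2}-1)\approx p^{d_1+d_2}$ does not beat $p^{d_1d_2-1}/4$ when $d_1d_2-1$ and $d_1+d_2$ are close, and the whole argument hinges on establishing that $\meo(H)$ is attained by Singer$\,\otimes\,$Singer with exact value $(p+1)(p^3-1)$, after which every remaining comparison is an elementary inequality or a small explicit computation.
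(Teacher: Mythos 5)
Your proposal is correct and follows essentially the same route as the paper: the same parametrisation $H=\GL_{d_1}(p)\otimes\GL_{d_2}(p)$ with $2\le d_1<d_2$, the same crude bound $(p^{d_1}-1)(p^{d_2}-1)<p^{d_1+d_2}$ reducing to $(d_1,d_2)=(2,3)$ for all $p$ and $(2,4)$ for $p\in\{2,3\}$, and the same explicit values $\meo(H)=30,240,21,104$ in the borderline cases. The only (minor) divergence is in eliminating $p\ge 5$ for $(2,3)$: the paper pins down $h_1=s_2^i$, $h_2=s_3^j$ by appealing to Theorem~\ref{thm:orders}, whereas you bound $\meo(H)\le\lcm(p^2-1,p^3-1)=(p+1)(p^3-1)<p^5/4$ directly via $\gcd(p^a-1,p^b-1)=p^{(a,b)}-1$ — a slightly more self-contained check that reaches the same conclusion.
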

 \begin{proof}
 By~\cite[Table~$3.5$A and (4.4.10)]{KL}, $H= \GL_{d_1}(p) \otimes \GL_{d_2}(p)$ where $2 \le d_1 < d_2$ and $d=d_1d_2$. It follows that
 \[
|h|\leq \meo(H) \le (p^{d_1}-1)(p^{d_2}-1) < p^{d_1+d_2}
\]
 and the last quantity is greater than $p^{d-1}/4$ if and only if $(d_1,d_2)=(2,3)$, or $p\in \{2,3\}$ and $(d_1,d_2)=(2,4)$. 

For $(d_1,d_2)=(2,4)$ and $p\in \{2,3\}$, a direct computation shows that $\meo(H)\leq p^{7}/4$ (in fact, $\meo(H)=30$ when $p=2$, and $240$ when $p=3$). Hence we may assume that $(d_1,d_2)=(2,3)$; that is, $H=\GL_2(p)\otimes \GL_3(p)$, and $d=6$. 

Assume $p\geq 5$ and write $h=h_1\otimes h_2$ with $h_1\in \GL_2(p)$ and $h_2\in \GL_3(p)$. If $|h_1|\leq (p^2-1)/4$ or $|h_2|\leq (p^3-1)/4$, then $|h|\leq |h_1||h_2|\leq (p^2-1)(p^3-1)/4<p^{5}/4$, a contradiction. Thus we may assume that $|h_1|>(p^2-1)/4$ and $|h_2|> (p^3-1)/4$. Since $|h_1|$ and $|h_2|$ are both integers and since $p\geq 5$, we must have $|h_1|\geq p^2/4$ and $|h_2|\geq p^3/4$. From Tables~\ref{tab:neworders1}, \ref{tab:neworders2} and~\ref{tab:neworders3} we have $h_1=s_2^i$ and $h_2=s_3^j$ (with $1\leq i,j\leq 3$), and a quick computation gives $|h|=|h_1\otimes h_2|<p^5/4$, which is a contradiction.

Finally, two straightforward computations show that $\meo(\GL_2(2)\otimes \GL_3(2))=21>2^6/4$ and $\meo(\GL_2(3)\otimes \GL_3(3))=104<3^6/4$.
 \end{proof}

 \begin{lemma} \label{lem:noC6s}
 If $H$ is a maximal subgroup of $\GL_d(p)$ of type $\C_6$, then $p\geq5$.
 \end{lemma}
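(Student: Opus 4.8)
The plan is to reduce the statement to the arithmetic existence conditions for Aschbacher class $\C_6$ subgroups recorded in Kleidman--Liebeck~\cite{KL}, and then simply observe that these conditions cannot be satisfied when $p\in\{2,3\}$. No delicate estimate is needed here, in contrast with the $\C_2$ and $\C_4$ analyses: the obstruction is purely about which roots of unity lie in the prime field $\mathbb{F}_p$.

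First I would recall the structure of a $\C_6$ subgroup. By~\cite[Section~4.6]{KL}, a maximal subgroup $H$ of $\GL_d(p)$ of type $\C_6$ is the normalizer of a symplectic-type $r$-group $R$, where $r$ is a prime with $r\neq p$ and $d=r^m$ for some $m\geq1$ (the quotient $H/RZ$ embedding into $\Sp_{2m}(r)$, respectively an orthogonal group over $\mathbb{F}_2$ when $r=2$). The essential point is the condition under which $R$ admits a faithful absolutely irreducible representation of degree $r^m$ over the prime field $\mathbb{F}_p$: when $r$ is odd one uses the extraspecial group $r^{1+2m}$ of exponent $r$, whose faithful irreducibles have character field $\mathbb{Q}(\zeta_r)$ and are realizable over $\mathbb{F}_p$ precisely when $\mathbb{F}_p$ contains a primitive $r$-th root of unity, i.e.\ when $p\equiv1\pmod r$; when $r=2$ one uses the symplectic-type group $4\circ 2^{1+2m}$, whose central cyclic subgroup of order $4$ must map to scalars of multiplicative order $4$, so one needs $\sqrt{-1}\in\mathbb{F}_p$, i.e.\ $p\equiv1\pmod 4$.

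It then remains to check that these congruences fail for the two small primes. If $p=2$, then $r$ is an odd prime (as $r\neq p$), and $p\equiv1\pmod r$ forces $r\mid 1$, which is impossible. If $p=3$, then either $r$ is an odd prime with $r\neq3$, in which case $p\equiv1\pmod r$ forces $r\mid 2$ and so $r\leq2$, contradicting that $r$ is an odd prime $\geq5$; or $r=2$, in which case the required condition $p\equiv1\pmod 4$ fails since $3\equiv3\pmod 4$. In every case the arithmetic condition guaranteeing the existence of $R\leq\GL_d(p)$ is violated, so no maximal $\C_6$ subgroup of $\GL_d(p)$ exists when $p\in\{2,3\}$. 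Hence $p\geq5$, as claimed.

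The only step requiring care — and the one I would write out most scrupulously — is quoting the existence conditions from~\cite{KL} in exactly the correct form, in particular distinguishing the odd-$r$ case (primitive $r$-th roots of unity, giving $p\equiv1\pmod r$) from the $r=2$ case (needing $\sqrt{-1}$, giving $p\equiv1\pmod 4$), and recording that $\C_6$ subgroups are defined only over prime fields and require $r\neq p$. Once these are stated accurately, the remainder is the elementary case check above.
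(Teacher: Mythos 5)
Your proof is correct and follows essentially the same route as the paper: both reduce the claim to the arithmetic existence conditions for $\C_6$-subgroups in Table~3.5.A of Kleidman--Liebeck (namely $d=r^m$ with $r\neq p$, and $p\equiv1\pmod r$ for odd $r$, respectively $p\equiv1\pmod4$ for $r=2$) and then observe these congruences fail for $p\in\{2,3\}$. The extra discussion of roots of unity is a correct justification of those conditions, but the substance of the argument is the same elementary case check the paper performs.
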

 \begin{proof}
 When $p=2$, we note that there are no $\C_6$-subgroups of $\GL_d(2)$. For the conditions in Table~3.5.A of~\cite{KL} would require that $d=r^m$ for some prime $r \ne 2$ and that $p \equiv 1\pmod{r}$, which is not possible when $p=2$. If $p=3$ then the conditions in Table~3.5.A of~\cite{KL} imply that either $r=2$, $d=2^m$ and $p\equiv 1\pmod 4$, or $r\geq5$ and $p\equiv 1\pmod r$. Clearly, neither condition holds.
 \end{proof}

\begin{lemma} \label{lem:noC7s}
 If $H$ is a maximal subgroup of $\GL_d(p)$ of type $\C_7$, where $p=2,3$, then $H$ does not contain an element of order at least $p^{d-1}/4$.
 \end{lemma}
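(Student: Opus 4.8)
The plan is to combine the explicit structure of $\C_7$-subgroups with a direct estimate of element orders in a wreath product. First I would invoke \cite[Table~$3.5$A]{KL} to record that a maximal $\C_7$-subgroup $H$ of $\GL_d(p)$ stabilises a tensor-induced decomposition $V=V_1\otimes\cdots\otimes V_t$ with each $\dim V_i=m\geq 3$ and $t\geq 2$, so that $d=m^t\geq 9$. The image of the tensor-induction map $\GL_m(p)^t\to\GL_{m^t}(p)$ has kernel the central subgroup $\{(\lambda_1 I,\dots,\lambda_t I):\prod_i\lambda_i=1\}$, so $H$ is isomorphic to a subgroup of a quotient of the abstract wreath product $W=\GL_m(p)\wr\Sym(t)$. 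Since orders cannot increase under passage to a subgroup of a quotient, it suffices to prove $\meo(W)<p^{d-1}/4$.

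Next I would bound element orders in $W$. Writing $w=(h_1,\dots,h_t)\sigma$ with $h_i\in\GL_m(p)$ and $\sigma\in\Sym(t)$, the standard formula for the order of a wreath-product element gives $|w|=\lcm_C(\ell_C\,o_C)$, where the least common multiple runs over the cycles $C$ of $\sigma$, the integer $\ell_C$ is the length of $C$, and $o_C=|h_{i_1}\cdots h_{i_{\ell_C}}|$ is the order of the product of the corresponding components read around $C$. Each cycle-product lies in $\GL_m(p)$, so $o_C\leq\meo(\GL_m(p))=p^m-1<p^m$, while $\ell_C\leq p^{\ell_C-1}$ holds for every $p\geq 2$. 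Hence $\ell_C\,o_C<p^{m+\ell_C-1}$, and bounding the least common multiple by the product over the $c$ cycles yields the strict estimate $|w|<p^{\sum_C(m+\ell_C-1)}=p^{(m-1)c+t}\leq p^{tm}$, using $c\leq t$.

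It then remains to verify the numerical inequality $p^{tm}\leq p^{d-1}/4$, that is $4\leq p^{m^t-1-tm}$. Since $p\geq 2$, this follows once $m^t-1-tm\geq 2$, i.e.\ $m^t\geq tm+3$; for $t=2$ this is $(m-3)(m+1)\geq 0$, valid for $m\geq 3$, and the general case $m\geq 3$, $t\geq 2$ follows by an easy induction on $t$. Combining the strict bound $\meo(W)<p^{tm}$ with $p^{tm}\leq p^{d-1}/4$ gives $\meo(H)\leq\meo(W)<p^{d-1}/4$, so $H$ contains no element of order at least $p^{d-1}/4$.

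The delicate point, and the step I would take most care over, is the tightness at the smallest case $(m,t,p)=(3,2,2)$, where $d=9$ and the target $p^{d-1}/4=64$ coincides exactly with $p^{tm}=2^6$. A naive estimate such as $\meo(\Sym(t))\cdot\exp(\GL_m(p))$ is fatal here, since $\exp(\GL_3(2))=84>64$; what saves the argument is that a single element $w$ cannot realise the full exponent of $\GL_m(p)$ across all tensor factors simultaneously, so only the individual cycle-products $o_C$ enter, each at most $p^m-1$. I would therefore check carefully that every inequality leading to $|w|<p^{tm}$ is strict, so that the boundary case $\meo(W)<64$ still yields $\meo(H)<p^{d-1}/4$.
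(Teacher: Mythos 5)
Your proof is correct, and it follows the same overall strategy as the paper---identify $H$ with (a quotient of) $\GL_m(p)\wr\Sym(t)$ where $d=m^t$, $m\ge 3$, $t\ge 2$, bound $\meo$ of the wreath product, and check the resulting numerical inequality---but it differs in the key estimate, and the difference matters. The paper uses the crude bound $\meo(H)\le(p^m-1)^t\,\meo(\Sym(t))<p^{mt+t}$ and compares this with $p^{d-3}$; that inequality fails precisely at $(m,t)=(3,2)$, so the paper must fall back on a direct computation showing $\meo(\GL_3(2)\wr\Sym(2))=28<2^6$ (and a similar check for $p=3$). You instead use the exact order formula $|w|=\lcm_C(\ell_C\,o_C)$ for wreath-product elements, which replaces the factor $\meo(\Sym(t))$ by the cycle lengths attached to the \emph{same} cycles whose products are being measured; this yields the uniformly strict bound $|w|<p^{(m-1)c+t}\le p^{mt}\le p^{d-1}/4$ with no exceptional case, since at the boundary $(m,t,p)=(3,2,2)$ the inequality $m^t\ge tm+3$ holds with equality and the strictness of $o_C\le p^m-1<p^m$ carries the day. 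Your identification of the delicate point is exactly right, and your handling of it (keeping the bound strict and avoiding the exponent of $\GL_3(2)$) is what lets you dispense with the paper's computational step. The only caveat worth recording is the structural one you already address: $H$ is the image of the tensor-induction map, i.e.\ a quotient of the abstract wreath product by a central subgroup, and element orders do not increase under passage to subgroups of quotients, so bounding $\meo(\GL_m(p)\wr\Sym(t))$ suffices.
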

 \begin{proof}
 By~\cite[(4.7.6)]{KL}, $H= \GL_{m}(p) \wr \Sym(t)$ where $d=m^t$, $t\geq2$ and $m \ge 3$. 
 If $p=2$, it follows that
 \[
 \meo(H) \le (2^{m}-1)^t \meo(\Sym(t)) < 2^{mt+t}
 \]
 since $\meo(\Sym(t)) \le 2^{t}$ for all $t$ (see, for instance,~\cite[Theorem~$2$]{Mass}).
 The last quantity is at most $2^{d-3}$ if and only if $mt+t \le m^t -3$. It is easily verified that this is the case unless $(m,t) = (3,2)$. But a direct computation for $(m,t)=(3,2)$ shows that $\meo(H)=28$, which is less than $2^{6}$. A similar calculation shows that there are no examples when $p=3$.
 \end{proof}

 \begin{lemma} \label{lem:onlyC8s}
 If $d\ge 3$ and $H$ is a maximal subgroup of $\GL_d(p)$ of type $\C_8$ and contains an element $h$ as in Tables~$\ref{tab:neworders1}$,~$\ref{tab:neworders2}$, or~$\ref{tab:neworders3}$, then $H =\CSp_4(2)$, $\CSp_4(3)$,
 $\CSp_6(2)$ or $\GO_4^{+}(3)$ or $\GO_3(3)$.
 \end{lemma}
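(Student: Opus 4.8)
The plan is to combine the Aschbacher/Kleidman--Liebeck description of class $\C_8$ with sharp upper bounds for $\meo(H)$ of the relevant classical similarity groups. Since $p$ is prime, the $\C_8$-maximal subgroups of $\GL_d(p)$ are the conformal symplectic groups $\CSp_d(p)$ (with $d$ even) and the conformal orthogonal groups $\GO_d^\epsilon(p)$ (with $p$ odd); the conformal unitary groups do not occur because $\mathbb{F}_p$ is not of the form $\mathbb{F}_{p_0^2}$, and in characteristic $2$ the orthogonal groups lie inside $\Sp_d(2)$ and so are not $\C_8$-maximal in $\GL_d(2)$ (see~\cite[Table~3.5.A]{KL}). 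As in the first paragraph of the proof of Lemma~\ref{lem:onlyC2s}, the hypothesis that $h$ occurs in Tables~\ref{tab:neworders1}--\ref{tab:neworders3} forces $|h|\geq |g|/|t_v|\geq p^{d-1}/4$, so that $\meo(H)\geq p^{d-1}/4$.

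Next I would bound $\meo(H)$ from above. A semisimple element of a symplectic or orthogonal similarity group decomposes $V$ into form-nondegenerate invariant summands, on each of which its eigenvalues are paired by the (scalar multiple of the) invariant form; consequently on an invariant summand of even dimension $2m$ its order divides $p^m+1$ or $p^m-1$, while an odd-dimensional orthogonal summand contributes only a bounded factor. Taking least common multiples over such a decomposition and incorporating the similarity factor, whose order divides $p-1$, and controlling the unipotent part by Lemma~\ref{uni}, I would obtain an explicit bound of the shape $\meo(H) < (p-1)\,p^{\lfloor d/2\rfloor}\prod_{i\geq 1}(1+p^{-i})$. Since $\prod_{i\geq 1}(1+p^{-i})$ is a bounded constant, substituting into $\meo(H)\geq p^{d-1}/4$ yields an explicit small upper bound for $d$ (of the order of $d\leq 8$ for $p=2$, $d\leq 6$ for $p=3$, $d\leq 6$ for $p=5$, and decreasing for larger $p$), leaving only finitely many triples $(d,p,\text{type})$ to examine.

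Finally I would treat these surviving triples one at a time. For each I would first compute the exact value of $\meo(H)$ by running through the maximal-torus orders compatible with the form; this eliminates the cases (such as $\Sp_8(2)$, $\GO_5(3)$ and $\GO_6^\epsilon(3)$) where even the true maximal element order falls short of $p^{d-1}/4$. For the remaining groups whose maximal element order is large enough (for instance $\CSp_4(p)$ with $p\geq 5$, or $\CSp_6(3)$), the $\meo$-bound no longer suffices, and I would instead check embeddability directly: an element $h$ from the Tables preserves a nondegenerate form up to scalar precisely when its rational canonical form is compatible with the eigenvalue pairing and with the even-dimensionality of nondegenerate invariant subspaces. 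For example $s_d^i$ and $J_1\pl s_{d-1}^i$ cannot lie in $\CSp_4(p)$, since their invariant-subspace structure is incompatible with a symplectic form. Carrying this out discards all remaining candidates except $\CSp_4(2)$, $\CSp_4(3)$, $\CSp_6(2)$, $\GO_4^+(3)$ and $\GO_3(3)$, and in particular separates the plus-type $\GO_4^+(3)$ from the minus-type $\GO_4^-(3)$.

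I expect the main obstacle to be exactly these borderline small-dimensional groups whose maximal element order does exceed $p^{d-1}/4$: here the order estimates on $|h|$ are no longer decisive, and one must analyse the conjugacy class of $h$ inside $\GL_d(p)$ and decide, using the eigenvalue-pairing and invariant-form constraints, whether a genuine table element embeds in $H$, rather than merely comparing orders.
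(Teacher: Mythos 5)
Your proposal follows essentially the same route as the paper: both reduce to the symplectic and odd-characteristic orthogonal cases, bound $\meo(H)$ by roughly $p^{d/2+1}$ (the paper cites \cite[Lemma~2.10 and Corollary~2.12]{GMPSorders} for this where you sketch a derivation), compare with $|h|\geq p^{d-1}/4$ to force $d$ small, and then settle the surviving small cases by exact $\meo$ computations together with a maximal-torus/eigenvalue-pairing analysis of whether $s_d^i$ or $J_1\pl s_{d-1}^i$ can embed. The argument is correct and the borderline cases you flag (e.g.\ $\CSp_4(p)$ for $p\geq5$) are exactly the ones the paper handles by the torus-order comparison you describe.
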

 \begin{proof}
Note that $|h|\geq p^{d-1}/4$. First observe that since $p$ is prime, there are no $\C_8$-subgroups of unitary type. Now suppose that $H$ is of symplectic type. In particular, $d$ is even. By~\cite[Lemma~2.10]{GMPSorders}, we have
 \[p^{d-1}/4\leq |h|\leq \meo(H) \le p^{d/2+1};\]
 we seek conditions on $p$ and $d$ for which $p^{d/2+1} \geq p^{d-1}/4$ or equivalently $4 p^{d/2+1} \geq p^{d-1}$.

Assume that $p \ge 5$. We have
 \[4 p^{d/2+1} < p^{d/2+2} \]
 and $p^{d/2+2} \le p^{d-1}$ if and only if $d \ge 6$. Thus $d=4$. By Tables~\ref{tab:neworders1},~\ref{tab:neworders2},~\ref{tab:neworders3} either $h=s_{4}^{i}$ or $J_1\pl s_{3}^{i}$, with $1\leq i\leq 3$. In the first case, $h$ acts irreducibly on $V$ and hence lies in a maximal torus of $\CSp_4(p)$ of order $(p-1)(p^2+1)$. Thus $(p^4-1)/3\leq |h|\leq (p-1)(p^2+1)$, which is easily seen to be false. In the second case, $h$ acts irreducibly on a $3$-dimensional subspace of $V$, however $\CSp_4(p)$ does not contain such elements.

 Assume that $p=3$. Then
 \[4 p^{d/2+1} < p^{d/2+3} \]
 and $p^{d/2+3} \le p^{d-1}$ if and only if $d \ge 8$. A direct calculation shows that $\meo(\CSp_6(3))=56 < 3^5/4$ and $\meo(\CSp_4(3))=24 \ge 3^{4}/4$, in fact $\CSp_4(3)$ is one of the groups in the statement of this lemma.

 Assume that $p=2$. Then $p^{d/2+1} < p^{d-1}/4$ if and only if $d/2+1 < d-3$ if and only if $d>8$. Direct calculation yields that $\meo(\CSp_8(2))=30 < 2^5$, but $\meo(\CSp_6(2))=15\ge 2^{3}$ and 
 $\meo(\CSp_4(2))=6 > 2^{3}/4$.
 So if $H$ is of symplectic type then all of the examples are listed in the Lemma.

 Now suppose $H$ is of orthogonal type, that is, $H=\GO_d^\varepsilon(p)Z$ where $Z$ is the subgroup of $\GL_d(p)$ of scalar matrices. Observe that by~\cite[Table~$3.5$A, Column~IV]{KL}, $p$ is odd because $H$ is maximal. 

Assume that $p \ge 5$. By Tables~\ref{tab:neworders1},~\ref{tab:neworders2},~\ref{tab:neworders3} since $d\ge 3$ we have two possibilities for $h$: either $h=s_d^i$ or $h=J_1\pl s_{d-1}^i$, with $1\leq i\leq 3$. In the first case, $h$ acts irreducibly on $V$ and hence (by considering the structure of the maximal tori of $H$) $d$ is even and $h$ lies in a maximal torus of order $(p-1)(p^{d/2}+1)$. Thus $(p^d-1)/3\leq |h|\leq (p-1)(p^{d/2}+1)$, which is easily seen to be false for every $d\geq 3$. In the second case, $h$ acts irreducibly on a subspace of $V$ of dimension $d-1$ and fixes a non-zero vector of $V$. By considering the structure of the maximal tori of $H$, we get that $d$ is odd and that $h$ lies in a maximal torus of order $\leq (p-1)(p^{(d-1)/2}+1)$. Thus $(p^{d-1}-1)/3\leq |h|\leq (p-1)(p^{(d-1)/2}+1)$, which is easily seen to be false for every odd $d\geq 5$. Therefore $d=3$ and $H=\GO_3(p)Z$. A computation shows that the matrix $h=J_1\pl s_2^i$ lies in $\GO_3(p)Z$ only if $h$ lies in $\GO_3(p)$. Therefore, $h$ has order at most $p+1$. Thus $(p^2-1)/3\leq |h|\leq p+1$, a contradiction.

Assume that $p=3$. Now from~\cite[Corollary~2.12]{GMPSorders} we see that $\meo(\GO_d^{\varepsilon}(3)) \le 3^{d/2+1}$; a direct calculation shows that this is less than $3^{d-1}/4$ unless $d\le 6$. Now it is straightforward to check that the only groups of orthogonal type containing elements in Tables~\ref{tab:neworders1},~\ref{tab:neworders2},~\ref{tab:neworders3} are those listed in the lemma.
\end{proof}

\begin{lemma} \label{lem:onlyC9s}
Let $d\geq 3$, let $p\in \{2,3\}$ and let $H $ be a subgroup of type $\C_9$ with $H$ maximal in $\SL_d(p)$ or maximal in $\GL_d(p)$. If $H$ contains an element $h$ with $|h| \ge p^{d-1}/4$, then $p=2$ and $(H,d) = (\Alt(6),3)$ or $(\Alt(7),4)$, or $p=3$ and $(H,d)=(2.M_{11},5)$.
\end{lemma}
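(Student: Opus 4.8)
The plan is to deduce from the hypothesis that $\meo(H)\geq p^{d-1}/4$ and to contradict this unless $d$ is small, so that the lemma reduces to a finite check in low dimensions. Since $H$ is of type $\C_9$, its socle is a nonabelian simple group $S$ with, projectively, $S\trianglelefteq H\leq\Aut(S)$, acting absolutely irreducibly on $V=\mathbb{F}_p^d$, realizable over no proper subfield and lying in no other Aschbacher class. Two facts drive the argument: $d$ is at least the least dimension $d_{\min}(S)$ of a faithful absolutely irreducible $\mathbb{F}_pS$-module, and $\meo(H)$ exceeds $\meo(S)$ by at most the bounded factor coming from $\Out(S)$ and the central scalars. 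First I would split according to whether $S$ is alternating, sporadic, of Lie type in cross characteristic $\ell\neq p$, or of Lie type in the defining characteristic $p$, and in each family bound $\meo(S)$ as a function of $d$.

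For $S=\Alt(c)$ the fully deleted permutation module gives $d_{\min}(S)\geq c-2$, so $c\leq d+2$ and $\meo(H)\leq\meo(\Sym(d+2))$, which by~\cite{Mass} has size $e^{O(\sqrt{d\log d})}$ and so falls below $2^{d-1}/4\leq p^{d-1}/4$ once $d$ is large. For $S$ of Lie type in cross characteristic, the Landazuri--Seitz--Zalesskii lower bounds on $d_{\min}(S)$ (recorded in~\cite{KL}) grow with the rank and field size at essentially the same exponential rate as the maximal torus orders, so that $\meo(S)$ is bounded above by a fixed polynomial in $d$; comparing this with $p^{d-1}/4$ yields an inequality of the shape $4d^2\geq 2^{d-1}$, which again bounds $d$. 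For $S$ of Lie type in the defining characteristic $p$, the $\C_9$ hypothesis forces the module to have field of definition $\mathbb{F}_p$ (a larger field of definition places $H$ in $\C_3$, a subfield realization in $\C_5$, and nontrivial Steinberg tensor factors in $\C_7$); since $\meo(S)$ is of order $p^{\mathrm{rank}}$ while the relevant restricted modules have dimension growing faster (for instance $\PSL_n(p)$ on its exterior square gives $\binom{n}{2}\leq n+\log_p 4$, whence $n\leq 4$), this again forces the rank, the field, and hence $d$ to be small. The finitely many sporadic $S$ are disposed of directly from the tabulated values of $d_{\min}(S)$ and $\meo(S)$.

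After these reductions only finitely many dimensions $d$ survive for each $p\in\{2,3\}$. For each such $d$ I would appeal to the classification of the class-$\C_9$ maximal subgroups of $\SL_d(p)$ and $\GL_d(p)$ in low dimension (from~\cite{KL} and the associated low-dimensional tables), compute $\meo(H)$ for every candidate, and test it against $p^{d-1}/4$; only the groups named in the statement survive. I expect the main obstacle to lie in the defining-characteristic case, where one must carefully enforce that $H$ genuinely belongs to $\C_9$ rather than to $\C_3$, $\C_5$ or $\C_7$, and correspondingly use the right value of $d_{\min}(S)$ from the restricted-highest-weight (Steinberg tensor product) classification; a secondary difficulty, compounded by the exceptional isomorphisms among small simple groups, is guaranteeing completeness of the low-dimensional enumeration and the reliability of the individual element-order computations that isolate precisely $\Alt(6)$ and $\Alt(7)$ for $p=2$ and $2.M_{11}$ for $p=3$.
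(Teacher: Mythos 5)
Your plan is sound and would reach the same finite list, but it takes a genuinely different route from the paper at the key reduction step. Where you bound $d$ from below by the minimal faithful module dimension of the socle --- fully deleted permutation modules for $\Alt(c)$, Landazuri--Seitz--Zalesskii bounds in cross characteristic, restricted highest weights in defining characteristic --- the paper instead invokes Liebeck's theorem \cite[Corollary~4.3]{Liebeck11}: a maximal $\C_9$-subgroup satisfies $|\overline{H}|<p^{2d+4}$ except for the explicit families $d=m(m-1)/2$ with socle $\PSL_m(p)$ and the three module dimensions $27$, $16$, $11$ for $E_6(p)$, $\POmega_{10}^+(p)$, $M_{24}$. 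It then plays this order bound off against the $\meo$ upper bounds of \cite{GMPSorders} family by family to eliminate $d\geq 10$, and finishes $d\leq 9$ with Kleidman's thesis and \texttt{magma}. Your route buys a more self-contained representation-theoretic argument and correctly identifies defining characteristic as the delicate case (the paper's case $d=m(m-1)/2$ with $m=4$, excluded via $\PSL_4(p)\cong\POmega_6^+(p)$, is exactly the kind of exceptional isomorphism you flag); Liebeck's bound buys a single uniform statement whose exceptions are already listed, so the bookkeeping of which modules genuinely lie in $\C_9$ rather than $\C_3$, $\C_4$, $\C_7$ or $\C_8$ is largely outsourced. Two cautions for your write-up: \cite{KL} deliberately does \emph{not} classify the class-$\C_9$ (i.e.\ class-$\mathcal{S}$) maximal subgroups, so the low-dimensional enumeration must come from \cite{Kthesis} or an equivalent source, as in the paper; and your cross-characteristic step is cleanest if stated as $\meo(S)$ being linearly (not just polynomially) bounded in $d$ up to the $|\Out(S)|$ and scalar factors, which is what the Landazuri--Seitz--Zalesskii bounds actually deliver and what makes the comparison with $p^{d-1}/4$ immediate.
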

\begin{proof}
We use the ``bar notation'' to denote the natural projection of $\GL_d(p)$ onto $\PGL_d(p)$. Observe that $\overline{H}$ is an almost simple group containing an element of order $\geq p^{d-1}/(4(p-1))$. Let $\overline{H}_0$ be the socle of $\overline{H}$. By~\cite[Corollary~4.3]{Liebeck11}, if $H \in \C_9$ then either
\begin{itemize}
\item[(i)] $|\overline{H}|<p^{2d+4}$; or
\item[(ii)] $d=(m-1)m/2$ and $\overline{H}_0=\PSL_m(p)$; or
\item[(iii)] $d=27$, $16$ or $11$ and $\overline{H}_0= E_6(p)$, $\POmega^{+}_{10}(p)$ or $M_{24}$ respectively.
\end{itemize}
Note that the alternating groups $\Alt(n)$ acting on their deleted permutation modules of dimensions $n-1$ or $n-2$ do not arise since such groups are contained in an orthogonal or symplectic group and so do not give rise to maximal $\C_9$-subgroups~\cite[p. 440-441]{Liebeck11}. 
Suppose that~(iii) holds. It is easy to check with~\cite{ATLAS} that for $\overline{H_0}=E_6(2)$, $\POmega_{10}^+(2)$ and $M_{24}$, the group $\Aut(\overline{H}_0)$ does not contain an element of order at least $2^{d-1}/4$. If $p=3$, then using~\cite[Table~A.7]{KS}, we have 
\[\meo(\Aut(E_6(p)))\leq |\Out(E_6(p))|\meo(E_6(p))=2(3,p-1)\frac{(p+1)(p^5-1)}{(3,p-1)}<\frac{p^{27-1}}{(4(p-1))}.\] 
Similarly, using ~\cite[Table~A.5]{KS}, if $p=3$, then 
\[\meo(\Aut(\mathrm{P}\Omega_{10}^+(p)))\leq |\Out(\mathrm{P}\Omega_{10}^+(p))|\meo(\mathrm{P}\Omega_{10}^+(p))\leq 2(p-1,4)\frac{(p^4+1)(p+1)}{(p-1,4)}<\frac{p^{16-1}}{4(p-1)}.\]

Suppose that~(ii) holds. Observe that $m\geq 3$ because $\overline{H}_0$ must be simple. Moreover, for $m=3$, we have $d=3=m$ and hence $\SL_d(p)\leq H$, which is a contradiction. For $m=4$, we have $d=6$ and the embedding of $\mathrm{PSL}_4(p)$ into $\PSL_d(p)$ described in~\cite[Section~$4$]{Liebeck11} is determined by the action of $\PSL_4(p)$ on the wedge product $\wedge^2W$, where $W$ is the natural $4$-dimensional module of $\mathrm{PSL}_4(p)$. However, this is exactly the embedding that determines the isomorphism $\PSL_4(p)\cong\POmega_6^+(p)$. Therefore, since we are assuming that $H\in \C_9$, we must also have $m\neq 4$. Thus $m\geq 5$. From~\cite[Table~3]{GMPSorders}, for $(m,p)\neq (3,2)$, we have $\meo(\overline{H}) \le \meo(\Aut(\overline{H}_0))=(p^{m}-1)/(p-1)$. Now a computation shows that
the inequality $(p^m-1)/(p-1)\geq p^{d-1}/(4(p-1))$ is never satisfied.

Now suppose that~(i) holds. Assume that $d\geq 10$ and $p=2$. In particular, $\overline{H}=H$ and $H$ contains an element of order $\geq 2^{d-1}/4=2^{d-3}$. We claim that there are no examples here. For suppose that $H_0=\PSL_m(q)$, for some $m$ and for some prime power $q$. From~\cite[Table~3]{GMPSorders} we have $\meo(H) \le (q^{m}-1)/(q-1)$ or $(m,q)\in\{(2,4),(3,2)\}$. If $\meo(H) \le (q^{m}-1)/(q-1)$ then $2^{d-3}\le (q^{m}-1)/(q-1)$, while $2^{2d+4}> |H| > \frac{1}{2(m,q-1)}q^{m^2-1}$ (see~\cite[Proposition~3.9(i)]{Bur2} for example). A direct calculation shows that these bounds cannot both hold when $d \ge 10$. If $(m,q)=(2,4)$ or $(3,2)$ and $d \ge 10$ then it is clear that $H$ cannot contain an element of order at least $2^{d-3}$. Similarly we take each possible simple group of Lie type in turn and direct calculation shows that the analogous bounds cannot hold when $d \ge 10$. We use the bounds on $\meo(H)$ from~\cite[Table~5]{GMPSorders}. For example if $H_0= {^2}F_4(q)$, where $q=2^{f}$, then we have $q^{26}/2 < |H| < 2^{2d+4}$ but $\meo(H) \le 16f(q^2 \sqrt{2q^{3}} +q+ \sqrt{2q}+1)$ and so $16f(q^2 \sqrt{2q^{3}} +q+ \sqrt{2q}+1) \ge 2^{d-3}$. If $d \ge 10$ then these bounds can only hold when $d=11$ and $q=2$ but it is straightforward to check in~\cite{ATLAS} that in this case $\meo(H) \le 20 < 2^{d-3}$. As a final example, if $H_0 = {^2}B_2(q)$
 where $q =2^{f}\ge 8$, then we have $\meo(H) \le f(q+\sqrt{2q}+1)$ so we have the bounds $f(q+\sqrt{2q}+1) \ge 2^{d-3}$, $q^{5}/2 \le |H| < 2^{2d+4}$, and $d \ge 10$. Direct calculation finds that these bounds are only satisfied when $f=5$ and $d=10$, but then we can check in \texttt{magma} that $\meo(\Aut(^2B_2(2^{5})))=41 < 2^{10-3}$.
 If $H_0=\Alt(m)$ ($m \ge 5$), then we have $2^{d-3} \le \meo(H) < e^{3/2\sqrt{m\log(m)}}$ by~\cite{La1,Mass}. We also have $m!/2 < |H| < 2^{2d+4}$ and a direct calculation shows that these bounds can only hold if $d \le 16$. But if $10 \le d \le 16$ then the bounds imply $m \le 14$ and we can obtain a much sharper upper bound on $\meo(H)$ by calculating the explicit value of $\meo(\Aut(\Alt(m)))$ in \texttt{magma}. Further direct calculation then shows that these stronger bounds cannot hold when $d \ge 10$.
 We note that if $H$ is a sporadic group, then~\cite{ATLAS} tells us that we have $2^{d-3} \ge \meo(H)$ for $d \ge 10$.

Assume that $d\geq 10$ and $p=3$. We carry out the same analysis as for $p=2$ and $d \ge 10$ and we see that no example arises.

Assume that $d \le 9$. Using the tables in Kleidman's thesis~\cite{Kthesis}, the only $\C_9$ subgroups in $\GL_d(2)$ with $d \le 9$ are $\Alt(6) \le \GL_3(2)$, $\Alt(7) \le \GL_4(2)$ and $\PGL_3(4).2 \le \GL_9(2)$. But $\meo(\Aut(\PSL_3(4)))=21 < 2^6$ and so we are left with the two examples in the lemma. The only $\C_9$ subgroups in $\PGL_d(3)$ with $d \le 9$ have $(\overline{H}_0,d) = (M_{11},5)$, $(\PSL_2(11) ,6)$, $(\PSL_3(3),6)$ and $(\PSL_3(9),9)$. Calculating $\meo(\Aut(\overline{H}_0))$ precisely in \texttt{magma} in each case yields that this is less than $3^{d-1}/8$ unless $(\overline{H}_0,d) = (M_{11},5)$ and this case is listed in the lemma.
\end{proof}

\begin{remark}\label{remAsch}{\rm
The reader may have noticed that the lemmas in this section consider the Aschbacher classes $\C_2$, $\C_4$, $\C_6$, $\C_7$, $\C_8$, and $\C_9$. Since $G_0=G\cap\GL(V)$ acts irreducibly on $V$, type $\C_1$ cannot arise. The elements in $\C_5$ are stabilizers of subfields of $\mathbb F_p$, however, since $|\mathbb F_p|$ is prime, there is no proper subfield and hence $\C_5$ is empty. The groups of type $\C_3$ will be considered in our proof of Theorem~\ref{thm:HA}, and will give rise to some examples. When $G_0$ is contained in a $\C_3$-subgroup, note that elements of $\C_3$ are stabilizers of extension fields of $\mathbb F_p$ of prime index, that is, subgroups isomorphic to $\GL_a(p^b)\rtimes C_b$ with $d=ab$ and $b$ prime. In particular, if $h$ lies in one of these groups we see that the dimensions of an indecomposable decomposition of $h^b$ can be grouped together so that the dimension of every group is a multiple of $b$. A tedious inspection of Tables~\ref{tab:neworders1}, \ref{tab:neworders2}, and~\ref{tab:neworders3} eliminates most of the cases.
}
\end{remark}

 \section{Proofs of Theorems~\ref{thm:HA} and~\ref{thm:HA4cycles}}

\begin{proof}[Proof of Theorem~$\ref{thm:HA}$]
Write $g=t_vh$, with $t_v\in T$ and $h\in G_0$, and observe that $g$ and $h$ are in Tables~$\ref{tab:neworders1}$,~$\ref{tab:neworders2}$, or~$\ref{tab:neworders3}$. In particular $|h|\geq p^{d-1}/4$. If $G_0 \ge \SL_d(p)$ then part (1) of the statement holds, so we assume that $G_0 \not \ge \SL_d(p)$.
We divide the proof into various cases.

\medskip

\noindent\textsc{Case $d=2$. } For $p \le 13$, we use \texttt{magma} to verify that the only examples occur in Theorem~\ref{thm:HA}. So we may assume that $ p\ge 17$; in particular, $h=s_2^i$, or $J_1\pl s_1^i$ or $s_1^i\otimes J_2$, for some $1\leq i\leq 3$. Let $Z=Z(\GL_2(p))$ and consider $\PGL_2(p) = \GL_2(p)/Z$. We note that in all three cases, $|hZ|\geq(p-1)/3$. Now $G_0Z/Z$ is a (not necessarily proper) subgroup of a group $M$, where either $M$ is a maximal subgroup of $\PGL_2(p)$ (not equal to $\PSL_2(p)$) or $M$ is a maximal subgroup of $\PSL_2(p)$. The maximal subgroups of $\PGL_2(p)$ for $p$ odd are described in~\cite[Corollary~$2.3$]{King} (we use the terminology introduced in~\cite[Section~$2$]{King}), and the maximal subgroups of $\PSL_2(p)$ were determined by Dickson (see~\cite[Chapter~$3$, Section~$6$]{Suzuki}). Thus $G_0Z/Z$ is contained in one of the following groups $M$:

\begin{enumerate}
\item[(i)] a dihedral group of order $2(p-1)$ (setwise stabilizer of a pair of points),
\item[(ii)] a dihedral group of order $2(p+1)$ (setwise stabilizer of a pair of imaginary points),
\item[(iii)] a reducible subgroup of order $p(p-1)$ (point stabilizer),
\item[(iv)] $\Sym(4)$, $\Alt(4)$, or $\Alt(5)$.
\end{enumerate}

Since $G_0$ is irreducible, $M$ cannot be of type (iii). If $M$ is of type~(iv), then $(p-1)/3\leq |h|\leq\max\{\meo(\Sym(4)), \meo(\Alt(4)), \meo(\Alt(5))\}=5$, which is a contradiction since $p\geq 17$. If $M$ is of type~(i), then $G_0\leq \GL_1(p)\wr\Sym(2)$ and part~(3)(iii) of Theorem~\ref{thm:HA} holds. Suppose finally that $M$ is of type~(ii). Then $G_0\leq \GammaL_1(p^2)$ and, in order to conclude that part~(2) of Theorem~\ref{thm:HA} holds, we need to show that $[\GL_1(p^2):G_0\cap \GL_1(p^2)]\leq 3$. Observe that $|G_0|$ is coprime to $p$ and hence $h=s_2^i$ or $h=J_1\pl s_1^i$. In the first case $[\GL_1(p^2):G_0\cap \GL_1(p^2)]\leq [\GL_1(p^2):\GL_1(p^2)\cap \langle s_2^i\rangle]\leq i\leq 3$. In the second case $h=J_1\pl s_1^i$ fixes a non-zero vector and, as every non-identity element of $\GL_1(p^2)$ acts fixed point freely on $V\setminus\{0\}$, we have $\langle h\rangle\cap\GL_1(p^2)=1$. Since $|\GammaL_1(p^2):\GL_1(p^2)|=2$, we must have $|h|\leq 2$, contradicting the facts that $|h|\geq (p-1)/3$ and $p\geq 17$.

\medskip

\noindent\textsc{Case $d\geq 3$ and $p\geq 5$. } We have that $g=s_d^i$ or $t_{e_1}(J_1\pl s_{d-1}^i)$ (with $1\leq i\leq 3$) by Tables~\ref{tab:neworders1},~\ref{tab:neworders2},~\ref{tab:neworders3}. If $(d,p) \ne (6,5),(7,5)$, then~\cite[Lemma~2.1 and Theorem~2.2]{GM} imply that $h\in \GL_d(p)$ is either contained in a $\C_3$- or a $\C_8$-subgroup, or $h$ is contained in one of the $\C_9$-subgroups listed in~\cite[Table~1]{GM}. Analysing the possibilities in~\cite[Table~1]{GM}, we see that either $d=9$ and $G_0$ normalises $\SL_3(p^{2})$, or $G_0$ is contained in a $\C_3$-subgroup, or $G_0$ is contained in a $\C_8$-subgroup. In the third case, Lemma~\ref{lem:onlyC8s} shows that none of these subgroups contain elements of the required order. 

Suppose next that $d=9$ and that $G_0$ normalises $\SL_3(p^{2})$. This possibility is eliminated since the image of $G_0$ in $\PGL_d(p)$ is almost simple and hence $\meo(G_0)\le (p-1) \meo(\Aut(\PSL_3(p^2))) = (p^{6}-1)/(p+1)$, which is less than $p^{8}/4$. If $(d,p)=(6,5),(7,5)$ then we can check in \texttt{magma} that $G_0$ must be contained in a $\C_3$ subgroup in this case as well.

For $d\geq 3$, only the elements of the form $s_d^i$ are contained in $\C_3$ subgroups and (in this case) we can use~\cite{GPPS} and~\cite{BamPen} to show that the only possibility for $G_0$ is either to be as in~(1), or as in~(2) (here, the condition $[\GL_1(p^d):G_0\cap \GL_1(p^d)]\leq 3$ follows from $1\leq i\leq 3$). 

\medskip

\noindent\textsc{Case $3\leq d\leq 8$ and $p=2$, or $3\leq d\leq 7$ and $p=3$. } Here we can check the primitive groups of affine type in the libraries stored in \texttt{magma}. We list the possibilities in Table~\ref{tab:HAgroups}.

\medskip
\noindent\textsc{Case $d\geq 8$ and $p=3$. }By Tables~\ref{tab:neworders1},~\ref{tab:neworders2},~\ref{tab:neworders3}, we may assume that either $h=J_1\pl s_{d/2}\pl s_{d/2-1}$, or some power of $h$ has order a primitive prime divisor of $3^{e}-1$ with $e > d/2$. Applying Aschbacher's theorem, we see that $G_0\le \GL_d(3)$ must be contained in a subgroup of type $\C_i$ for some $i=1, \ldots,9$. Since $d\geq 8$, Lemmas~\ref{lem:onlyC2s},~\ref{lem:onlyC4s},~\ref{lem:noC6s},~\ref{lem:noC7s},~\ref{lem:onlyC8s},~\ref{lem:onlyC9s}, and Remark~\ref{remAsch} imply that either $G$ is as in (3)(ii), or $G_0$ is contained in a $\C_3$-subgroup. In the latter case, Tables~\ref{tab:neworders1},~\ref{tab:neworders2},~\ref{tab:neworders3} imply that $g=s_d^i$. Now we can use~\cite{GPPS} and~\cite{BamPen} to show that $G$ is as in (1) or (2). 
 We note that all of these subgroups contain elements from Tables~\ref{tab:neworders1},~\ref{tab:neworders2},~\ref{tab:neworders3}.

\medskip
\noindent\textsc{Case $d\geq 9$ and $p=2$. } Again we apply Aschbacher's theorem together with Lemmas~\ref{lem:onlyC2s},~\ref{lem:onlyC4s},~\ref{lem:noC6s},~\ref{lem:noC7s},~\ref{lem:onlyC8s},~\ref{lem:onlyC9s}, and Remark~\ref{remAsch}. Since $d \ge 9$, we find that the only possibilities are that $G$ is as in~(3)(i), or $G_0$ is contained in a $\C_3$-subgroup. In the latter case, Tables~\ref{tab:neworders1},~\ref{tab:neworders2},~\ref{tab:neworders3} imply that $g=s_a\pl s_b$ with $(a,b)=2$ and $G_0 \le \GL_{d/2}(4):2 = \GammaL_{d/2}(4)$, or $g=s_d^i$. Using~\cite{BamPen} we find that $G$ is as in (1) or (2) when $g=s_d^i$. 

Next suppose then $g=s_a\pl s_b$ and $G_0\leq \GammaL_{d/2}(4)$. We claim that the only possibility is that $G_0$ contains $\SL_{d/2}(4)$ as in case (1). We argue by contradiction and we suppose that $G_0$ does not contain $\SL_{d/2}(4)$. Observe that since $g$ has odd order, we have $g\in G_0\cap \GL_{d/2}(4)$. Since $g\in G_0\cap \GL_{d/2}(4)$, the element $g$, when viewed as an element of $\GL_{d/2}(4)$, is of the form $s_{a/2}\pl s_{b/2}$ (and $(a/2,b/2)=1$). 
 Without loss of generality, suppose that $a/2 >b/2$. Let $\ell$ be the largest divisor of $4^{a/2}-1$ that is relatively prime to $4^{m}-1$ for every $1\leq m<a/2$. By~\cite[Lemma~$2.1$~$(c)$]{GM} and the comments that precede that lemma, we see that either $\ell>a+1$, or $a/2 \in \{3,6\}$. Observe that when $a/2=3$ we must have $(d,a,b)=(10,6,4)$ (because $(a,b)=2$ and $d\geq 9$), and when $a/2=6$ we must have $(d,a,b)\in \{(14,12,2),(22,12,10)\}$ (because $(a,b)=2$). 

Now we deal with the three possibilities $(d,a,b)\in \{(10,6,4),(14,12,4),(22,12,10)\}$. Here $g\in G_0\leq \GL_{d/2}(4)$ and some power of $g$ has order $a+1$, a primitive prime divisor of $4^{a/2}-1$. We can check easily in \texttt{magma} that if $G_0 \le \GL_5(4)$ is irreducible and contains $g=s_6\pl s_4$, then $G_0$ contains $\SL_{5}(4)$. So we may assume that $(d,a,b) \ne (10,6,4)$. Now an analysis (using~\cite{GPPS}) shows that if $(d,a,b) = (22,12,10), (14,12,2)$, then the only irreducible $G_0$ containing $g$ must contain $\SL_{d/2}(4)$ as in case (1) (the analysis in our two cases is straightforward since, in the notation of~\cite{GPPS}, we have $r=13$, $e=6$, $r=2e+1$, and $d=7$ or $11$, and so there are very few possibilities for $G_0$).

It remains to consider the case that $\ell>a+1$, where $\ell$ is the largest divisor of $4^{a/2}-1$ coprime to $4^m-1$ for every $1\leq m<a/2$. Now a power of $g$ has order $\ell$, and~\cite[Theorem~2.2]{GM} applied to this power of $g$ implies that the irreducible subgroup $G_0\cap \GL_{d/2}(4)$ of $\GL_{d/2}(4)$ 
 \begin{itemize}
 \item[(i)] contains $\SL_{d/2}(4)$ (but we are assuming this is not the case), or
 \item[(ii)] is contained in $\GU_{d/2}(2)$, $\GSp_{d/2}(4)$, or $\GO^{\epsilon}_{d/2}(4)$, or
 \item[(iii)] preserves an extension field structure (but this is not the case since $(a/2,b/2)=1$), or
 \item[(iv)] normalizes $\GL_{d/2}(2)$, or
 \item[(v)] normalizes one of the nine subgroups listed in~\cite[Table~1]{GM}.
 \end{itemize}
 In particular, $G_0\cap \GL_{d/2}(4)$ satisfies either~(ii),~(iv) or~(v). Using $d\geq 9$ and $(a/2,b/2)=1$, an immediate check of~\cite[Table~$1$]{GM} reveals that no example arises in our case.
 A calculation shows that $\GU_{d/2}(2)$, $\GSp_{d/2}(4)$, and $\GO^{\epsilon}_{d/2}(4)$ do not contain elements of order $|g| = (4^{a/2}-1)(4^{b/2}-1)/3$ (see~\cite{GMPSorders} for example) so $G_0\cap \GL_{d/2}(4)$ does not satisfy (ii) either. Similarly, if $G_0$ satisfies~(iv), then $G_0$ cannot contain elements of order as large as $|g|$. 

Thus we have shown in all cases that $G_0$ satisfies one of the conditions (1)--(4) in the statement of Theorem~\ref{thm:HA}. 
 \end{proof}

\begin{proof}[Proof of Theorem~$\ref{thm:HA4cycles}$]
Since $G$ contains an element $g=t_vh$ with at most four cycles on $V$, we have $|g|\geq p^d/4$ and hence $G$ and $G_0$ appear in Theorem~\ref{thm:HA}. The examples in (2) of Theorem~\ref{thm:HA} contain elements of the form $s_d^i$ (for $1\leq i\leq 3$), and these elements have at most four cycles; thus we have the examples in (1) of Theorem~\ref{thm:HA4cycles} with $r=d$. Now suppose that $G$ is as in (1) of Theorem~\ref{thm:HA}. When $d \le 8$ and $p=2$, or $d\le 7$ and $p=3$, or $d=2$ and $p \le 13$ we check in \texttt{magma} that the only examples appear in Theorem~\ref{thm:HA4cycles}. So we suppose that $d$ and $p$ do not satisfy these bounds.

 First suppose that $r=1$.
If $p=2$ then $G_0=\GL_{d}(2)$ as in (1) of Theorem~\ref{thm:HA4cycles}.
If $p=3$ then $G_0$ contains $\SL_{d}(3)$, which contains $s_d^{2}$ as in (1) of Theorem~\ref{thm:HA4cycles}.
If $p \ge 5$ then Tables~\ref{tab:new1},~\ref{tab:new2},~\ref{tab:new3} imply that $h=s_1 \otimes J_2$ (and $d=2$) or $h= s_d^i$ or $h = J_1 \pl s_{d-1}^i$ (for $i=1,2,3$).
Since $G_0$ contains $\SL_d(p)$ and $h$, and since $\det(h)$ has multiplicative order $(p-1)$, $(p-1)/2$ or $(p-1)/3$, it follows that $G_0$ contains $s_d^{i}$ as in (1) of Theorem~\ref{thm:HA4cycles}.

Now suppose that $r \ge 2$. The analysis in the proof of Theorem~\ref{thm:HA} implies that (under our restrictions on $d$ and $p$) if $g \in G$ then $g=s_d^{i}$, or $p=2$ with $g=s_{a}\pl s_b$, $(a,b)=2$ and (therefore) $r=2$. But if $p=r=2$ then $G_0$ contains $\SL_{d/2}(4)$, which contains $s_d^{i}$. Thus $G$ satisfies (1) of Theorem~\ref{thm:HA4cycles} in all cases of (1) of Theorem~\ref{thm:HA}. 

We verify using \texttt{magma} that the only groups in~(4) of Theorem~\ref{thm:HA} that admit a permutation with at most four cycles are those indicated with a ``y'' in Table~\ref{tab:HAgroups}.

Finally, suppose that $G$ and $G_0$ are as in~(3) of Theorem~\ref{thm:HA}. Assume that $p=2$. Thus $G_0\leq \GL_{d/r}(2)\wr\Sym(r)$ for some divisor $r$ of $d$ with $r>1$. If $d/r\leq 2$, then from Lemma~\ref{lem:onlyC2s} we have either $d=r\leq 5$ or $d=2r\leq 6$. It is a computation to show that in each of these cases $G$ contains an element with at most four cycles. So now suppose that $d/r\geq 3$. Now Lemma~\ref{lem:onlyC2s} implies that $h\in \GL_{d/r}(2)^r$. For $d>9$, with a direct inspection of Tables~\ref{tab:new1},~\ref{tab:new2},~\ref{tab:new3}, we see that $h$ is the sum of at most three indecomposable summands and hence $r\leq 3$. Moreover, a more careful inspection shows that in each case $h$ has an indecomposable summand acting irreducibly on a subspace of $V$ of dimension $\geq d/2$. Clearly this shows that $r=2$. (Observe that the case $r=2$ does arise from line~\ref{line7T6} of Table~\ref{tab:new2} with $a=1$, $a_1=d/2-1$ and $a_2=d/2$.) The cases $d\leq 8$ can be easily dealt with the help of a computer.

Assume that $p=3$. Thus $G_0\leq \GL_{d/r}(3)\wr\Sym(r)$ for some divisor $r$ of $d$ with $r>1$. If $d=r$, then Lemma~\ref{lem:onlyC2s} implies that $d=r\leq 3$. Now a computation shows that in each of these cases $G$ contains an element with at most four cycles. So now suppose that $d>r$, and Lemma~\ref{lem:onlyC2s} implies that $r=2$. A computation shows that $T\cdot (\GL_2(3)\wr\Sym(2))$ has no element with at most four cycles and hence we may assume that $d\neq 4$. Now Lemma~\ref{lem:onlyC2s} implies that $h\in \GL_{d/2}(3)^2$. Since $d\geq 6$, a direct inspection of Tables~\ref{tab:new1},~\ref{tab:new2},~\ref{tab:new3} implies that $h$ has an indecomposable summand acting irreducibly on a subspace of $V$ of dimension $\geq d-1$, which is clearly a contradiction.

Finally suppose that $p\geq 5$. In this case the element $J_1\oplus s_d$ is always contained in $\GL_1(p)\wr \Sym(2)$.
\end{proof}

\thebibliography{10}

\bibitem{Asch}
Michael Aschbacher, \emph{On the maximal subgroups of the finite classical
 groups}, Invent. Math. \textbf{76} (1984), no.~3, 469--514.

\bibitem{BamPen}J.~Bamberg, T.~Penttila, Overgroups of cyclic {S}ylow subgroups of linear groups, {\em Comm. Algebra} \textbf{36} (2008), 2503--2543.

\bibitem{magma}W.~Bosma, J.~Cannon, C.~Playoust, The Magma algebra system. I. The user language, \textit{J. Symbolic Comput.} \textbf{24} (1997), 235--265.

\bibitem{BP}D.~Bubboloni, C.~E.~Praeger, Normal coverings of finite symmetric and alternating groups, \textit{J. Combin. Theory Ser. A} \textbf{118} (2011), 2000--2024.

\bibitem{chore}D.~Bubboloni, C.~E.~Praeger, P.~Spiga, A sharp upper bound on the normal covering number of $\Sym(n)$, in preparation.

\bibitem{Bur2} T.~C. Burness, Fixed point ratios in actions in finite classical groups. {II}, {\em Journal of Algebra} \textbf{309} (2007), 80--138.

\bibitem{ATLAS}J.~H.~Conway, R.~T.~Curtis, S.~P.~Norton, R.~A.~Parker, R.~A.~Wilson, \textit{Atlas of finite groups}, Clarendon Press, Oxford, 1985.

\bibitem{GMPSorders}
S.~Guest, J.~Morris, C.~E.~Praeger, P.~Spiga, On the maximum orders of elements 
of finite almost simple groups and primitive permutation groups, submitted, arXiv:1301.5166 [math.GR].

\bibitem{GMPScycles}S.~Guest, J.~Morris, C.~E.~Praeger, P.~Spiga, Finite primitive permutation groups containing a permutation with at most four cycles, in preparation.

\bibitem{GM} R.~M. Guralnick, G.~Malle, Products of conjugacy classes and fixed point spaces, {\em J. Amer. Math. Soc.}, May 2011.

\bibitem{GPPS}
R.~M. Guralnick, T.~Penttila, C.~E. Praeger, J.~Saxl, Linear groups with orders having certain large prime divisors, {\em Proc. London Math. Soc.} \textbf{78} (1999), 167--214.

\bibitem{HH}
B.~Hartley, T.~O.~Hawkes, \emph{Rings, Modules and Linear Algebra}, Chapman and Hall, London, 1971.

\bibitem{KS}W.~M.~Kantor, \'{A}.~Seress, Large element orders and the
 characteristic of Lie-type simple groups, \textit{J. Algebra} \textbf{322} (2009), 802--832.

%
\bibitem{King}
O.~H.~King, The subgroup structure of finite classical groups in terms of geometric configurations. Surveys in combinatorics 2005, London Math. Soc. Lecture Note Ser., 327, Cambridge Univ. Press, Cambridge, 2005.

\bibitem{Kthesis}
P.~B. Kleidman, {\em The subgroup structure of some finite simple groups}, PhD thesis, University of Cambridge, 1987.

\bibitem{KL}P.~Kleidman, M.~Liebeck, \textit{The subgroup structure of the
 finite classical groups}, London Mathematical Society Lecture Notes
 Series \textbf{129}, Cambridge University Press, Cambridge.

\bibitem{La1}E.~Landau, \"{U}ber die Maximalordnung der Permutationen gegebenen Grades, \textit{Arch. Math. Phys.} \textbf{5} (1903), 92--103.

\bibitem{La2} E.~Landau, \emph{Handbuch der Lehre vor der Verteilung der Primzahlen}, Teubner, Leipzig, 1909.

\bibitem{Liebeck11}M.~W.~Liebeck, On the orders of maximal subgroups
 of the finite classical groups, \textit{Proc. London Math. Soc. (3)}
 \textbf{50} (1985), 426--446.

\bibitem{Mass}J.~P.~Massias, J.~L.~Nicolas, G.~Robin, Effective Bounds for the Maximal Order of an Element in the Symmetric Group, \textit{Mathematics of Computation} \textbf{53} (1989), 665--678.

\bibitem{mueller}P.~M\"{u}ller, Permutation groups with a cyclic two-orbits subgroup and monodromy groups of
Laurent polynomials, \textit{Ann. Scuola Norm. Sup. Pisa} \textbf{12} (2013), to appear.

\bibitem{Schon}J.~B.~Rosser, L.~Schoenfeld, Approximate formulas for some functions of prime numbers, \textit{Illinois J.~Math.} \textbf{6} (1962), 64--94.

\bibitem{Suzuki}M.~Suzuki, Group theory I, Springer-Verlag, New York, 1982.
\end{document}